\documentclass[5p,times]{elsarticle}

\usepackage[hidelinks]{hyperref}
\usepackage{enumitem}
\usepackage{amssymb,amsmath,amsthm,mathrsfs}
\usepackage{xcolor}
\usepackage{amsfonts}
\usepackage{soul,color}
\usepackage{subfig}
\usepackage{microtype}
\usepackage{graphicx}

\usepackage{csquotes}

\usepackage{algorithm}
\usepackage{algorithmicx}  
\usepackage{algpseudocode}
\usepackage{booktabs}
\usepackage{dsfont}
\renewcommand{\mathbb}{\mathds}

\DeclareMathOperator*{\closure}{cl}

\newcommand{\mbb}[1]{\mathbb{#1 }}

\newcommand{\pce}[1]{\mathsf{#1}}

\newcommand{\inst}[1]{_{#1}}

\newcommand{\spl}
{{L}^2((\Omega, \mathcal{F}, \mu), \mathbb{R})}

\newcommand{\I}{\mathbb{I}}
\newcommand{\N}{\mathbb{N}_0}
\newcommand{\R}{\mathbb{R}}

\newtheorem{theorem}{Theorem}
\newtheorem{lemma}[theorem]{Lemma}
\newtheorem{corollary}[theorem]{Corollary}
\newtheorem{remark}[theorem]{Remark}
\newtheorem{definition}[theorem]{Definition}

\newtheorem{example}[theorem]{Example}
\newtheorem{proposition}[theorem]{Proposition}

\DeclareMathOperator{\mean}{\mathbb{E}}
\DeclareMathOperator{\var}{\mathrm{Var}}

\DeclareMathOperator{\covar}{\mathrm{Cov}}
\DeclareMathOperator{\prob}{\mathbb P}

\DeclareFontFamily{U}{mathx}{\hyphenchar\font45}
\DeclareFontShape{U}{mathx}{m}{n}{
      <5> <6> <7> <8> <9> <10>
      <10.95> <12> <14.4> <17.28> <20.74> <24.88>
      mathx10
      }{}
\DeclareSymbolFont{mathx}{U}{mathx}{m}{n}
\DeclareMathSymbol{\bigtimes}{1}{mathx}{"91}

\usepackage{tikz}
\usetikzlibrary{positioning,shapes,arrows.meta}
\usetikzlibrary{backgrounds}

\definecolor{gray1}{rgb}{0.9,	0.91,	0.93}
\definecolor{lightblue}{rgb}{0.6, 0.76, 0.92}
\definecolor{gray2}{rgb}{0.95, 0.95,0.95	}
\definecolor{gray3}{rgb}{0.85, 0.85, 0.85	}
\definecolor{lightblue1}{rgb}{0.87, 0.92, 0.97	}
\definecolor{lightblue2}{rgb}{0.62, 0.76, 0.90	}
\definecolor{lightblue3}{rgb}{0.36	0.61	0.84}	
\setlist[enumerate,1]{label={(\roman*)}}

\journal{Journal of \LaTeX\ Templates}

\bibliographystyle{model5}\biboptions{authoryear}

\usepackage{color}

\begin{document}

\begin{frontmatter}

\title{Behavioral Theory for Stochastic  Systems? \\A Data-driven Journey from Willems to Wiener and Back Again\tnoteref{mytitlenote}}

\author[mymainaddress]{Timm Faulwasser
\corref{mycorrespondingauthor}}
\cortext[mycorrespondingauthor]{Corresponding author}
\ead{timm.faulwasser@ieee.org}
\author[mymainaddress]{Ruchuan Ou}
\ead{ruchuan.ou@tu-dortmund.de}
\author[mymainaddress]{Guanru Pan}
\ead{guanru.pan@tu-dortmund.de}
\author[mysecondaryaddress]{Philipp Schmitz}
\ead{philipp.schmitz@tu-ilmenau.de}
\author[mysecondaryaddress]{Karl Worthmann}
\ead{karl.worthmann@tu-ilmenau.de}

\address[mymainaddress]{Institute of Energy Systems, Energy Efficiency and Energy Economics, TU Dortmund University, 44227 Dortmund, Germany}
\address[mysecondaryaddress]{Optimization-based Control Group, Institute of Mathematics, Technische Universität Ilmenau, 98693 Ilmenau, Germany}

\begin{abstract}
The fundamental lemma by Jan C.\ Willems and co-workers is deeply rooted in behavioral systems theory and it has become one of the supporting pillars of the recent progress on data-driven control and system analysis. 
This tutorial-style paper combines recent insights into stochastic and descriptor-system formulations of the lemma to further extend and broaden the formal basis for behavioral theory of stochastic linear systems. We show that series expansions---in particular  Polynomial Chaos Expansions (PCE) of $L^2$-random variables, which date back to Norbert Wiener's seminal 
work---enable equivalent behavioral characterizations of linear stochastic systems. Specifically, we prove that  under mild assumptions the behavior of the dynamics of the $L^2$-random variables is equivalent to the behavior of the dynamics of the series expansion coefficients and that it entails the behavior composed of sampled realization trajectories. We also illustrate the short-comings of the behavior associated to the time-evolution of the statistical moments. The paper culminates in the formulation of the stochastic fundamental lemma for linear time-invariant systems, which in turn enables numerically tractable formulations of data-driven stochastic optimal control combining Hankel matrices in realization data (i.e. in measurements) with PCE concepts. 
\end{abstract}

\begin{keyword}
behavioral systems theory, data-based prediction, data-driven control, descriptor systems, linear stochastic systems, polynomial chaos expansions, uncertainty quantification, uncertainty propagation 
\end{keyword}

\end{frontmatter}

\section{Introduction — Spring has arrived} \label{sec:Intro}
In popular culture \textit{winter is coming} is an established meme. In the context of the past and infamous AI winter~\citep{AIwinter} and its effect on research addressing data-driven methods, the recent rapid increase of interest and available results evidences the opposite. That is, we are currently experiencing an ongoing extremely vibrant growth period of research activities on data-driven methods for systems and control at least partially catalyzed by AI.  

From the control perspective, data-driven methods at large can be understood as an attempt to overcome the traditional two-step procedure of modeling---either first principles combined with parameter estimation or system identification based on measurement data---and subsequently designing a controller for the obtained system/process model, see Figure~\ref{fig:concept}. Building on the success and impact of subspace identification methods~\citep{verhaegen2015subspace,MarkovskyWillemsHuffelDeMoor06}, a recent stream of papers---which, e.g., includes~\cite{dePersis19,Favoreel99,Fiedler2021,Van20} and the tutorial survey~\cite{MarkDorf21}---has analyzed the so-called direct approach to control design, see  Figure~\ref{fig:concept}. This approach aims at obtaining a non-parametric system description directly from data, i.e. without identifying system matrices, and at the design of feedback laws directly using this description.

\begin{figure}[tb]
    \centering
    \begin{tikzpicture}[my triangle/.style={-{Triangle[width=\the\dimexpr1.8\pgflinewidth,length=\the\dimexpr0.8\pgflinewidth]}}]
    \footnotesize
    \node[rectangle, draw=none, fill=gray1, minimum height=2.5em, text centered, text width=7em, outer sep=2.5pt] (A) {Measure data};
    \node[right=of A, rectangle, draw=none, fill=gray1, minimum height=2.5em, text centered, text width=7em, outer sep=2.5pt] (B) {Identify system dynamics};
    \node[right=of B,rectangle, draw=none, fill=gray1, minimum height=2.5em, text centered, text width=7em, outer sep=2.5pt] (C) {Design \\ controller};
    \node[below=2pt of A] (A1) {$\{(u_k,y_k)\}_{k=0}^{T-1}$};
    \node[below=2pt of B] (B1) {$\begin{aligned}x_{k+1} & = A x_k + Bu_k\\ y_k&=Cx_k + Du_k\end{aligned}$};
    \node[below=2pt of C] (C1) {$u=Ky$};
    \node[below=5.5em of A, rectangle, draw=none, fill=gray1, minimum height=2.5em, text centered, text width=7em, outer sep=2.5pt] (A2) {Measure data};
    \node[below=5.5em of C,rectangle, draw=none, fill=gray1, minimum height=2.5em, text centered, text width=7em, outer sep=2.5pt] (C2) {Design \\controller};

    \node[above=1pt of B, text width=250pt] (D) {The established approach \textcolor{gray}{(indirect, two-step)}};
    \node[below=1pt of B1, text width=250pt] (E) {The data-driven approach \textcolor{gray}{(direct)}};
    
    \draw[line width=6pt,my triangle, draw=lightblue](A) -- (B);
    \draw[line width=6pt,my triangle, draw=lightblue](B) -- (C);
    \draw[line width=6pt,my triangle, draw=lightblue](A2) -- (C2);
\end{tikzpicture}
    \caption{Direct and indirect approaches to data-driven controller design.}
    \label{fig:concept}
\end{figure}

In turn, subspace identification techniques  are closely linked to behavioral concepts in systems and control, which have been conceived by \textit{Jan C. Willems}; see~\cite{willems86i,willems86ii,willems87} for original references,    \cite{PoldermanWillmes98,MarkovskyWillemsHuffelDeMoor06} for textbook expositions, and \cite{willems07} for an introduction. It stands to reason that behavioral systems theory is indeed foundational for a wide range of recent results on data-driven control~\citep{Markovsky21,MarkDorf21}.

\textit{The} pivotal result in this context  shows for controllable finite-dimensional Linear Time-Invariant (LTI) systems that all input-output-state trajectories of finite length lie in the column space of suitable Hankel matrices constructed directly from data. It appeared already in the seminal paper by~\cite{WRMDM05}---hence it is commonly referenced as \textit{the Fundamental Lemma of Willems' et al}.\footnote{We remark that the name \emph{fundamental lemma} as such was not used by~\cite{WRMDM05}; it appears to have been coined by \cite{katayama2006system}. }
Given the recent and seemingly exponential growth of publications extending/tailoring the fundamental lemma to specific system classes and using it for data-driven control,
we postpone a more detailed literature review with respect to the former to Section~\ref{sec:OverviewFundLem}. With respect to the latter, we remark that the two most frequent usages of the fundamental lemma for control design are output-feedback predictive control and direct feedback design~\cite{MarkDorf21}. The major conceptual advantage of output-feedback predictive control is that it alleviates the need to design state estimators. The earliest conception of Model Predictive Control (MPC) based on Hankel matrices, which did not receive widespread attention, appears to be~\cite{Yang15a}.
This line of research bifurcated to exponential growth with~\cite{coulson2019data} and numerous follow-ups. With respect to data-driven feedback design, we refer to~\cite{dePersis19,MarkDorf21} and the references therein. Both lines of research are linked by the concept of optimizing closed-loop behaviors, cf.\ \cite{Dorfler22a}. So far, applications of data-driven control based on behavioral concepts include power systems~\citep{Schmitz22,Huang19,Carlet20}, autonomous driving~\citep{Wang22}, and building control~\citep{Lian21,ODwyer21,BILGIC2022}.

Interestingly, the common scope of behavioral systems theory and its major success in data-driven control in discrete-time settings go along with the orthogonal trend of limited progress with respect to behavioral approaches for stochastic systems, cf. the open problems identified by~\cite{MarkDorf21}.
The ultimate paper of Jan C.\ Willems discusses behavioral ideas for stochastic systems~\citep{Willems12}. Therein the main focus is on open stochastic static systems, their interconnection, and the construction of appropriate $\sigma$-algebras. \cite{Baggio17} extend this to a canonical kernel representation of stochastic LTI processes.
Following a different route, \cite{Pola15a,Pola16a} use behavioral ideas to study equivalence concepts for stochastic linear systems in discrete-time and continuous-time settings without actually defining the stochastic behavior as such. This has been extended to descriptor systems, i.e. discrete-time LTI systems subject to linear algebraic constraints, by  \cite{Pola17a}.
Yet, none of these works covers data-driven representations of stochastic LTI systems.

Moreover, several approaches to behavioral concepts in infinite-dimensional settings have proposed: \cite{Pillai99a,Pillai99b} discuss behavioral kernel representations of systems with distributed parameters using an algebraic approach. This is extended to dissipative systems by \cite{Pillai02a}, while \cite{yamamoto2008behavioral} focus on controllability in a behavioral setting. 
\cite{ball2006conservative} discuss conservative realizations of linear systems on Hilbert spaces in a behavioral setting, while \cite{seiler2015algebraic} give an overview on algebraic theory for linear systems.
Argu\-ably, data-driven control of infinite-dimensional systems would require measurement data in appropriate infinite-dimensional spaces. Likewise  data-driven approaches to stochastic systems are seemingly limited in applicability as stochastic processes are typically described via cumulative distributions,  probability densities, or random variables---all of which are, in general, infinite dimensional.

Statistical moments, i.e., expectation, co-variance, skewness etc., provide an alternative representation of random variables, which is finite dimensional under rather specific assumptions on the underlying distribution. 
One may claim that the frequently-used modeling in terms of the first two moments (expectation and co-variance) does not stretch far beyond i.i.d. (independent and identically distributed) Gaussian uncertainties as moment closures for nonlinear systems are difficult and as manifold applications require modeling non-Gaussian uncertainties.  In turn, non-Gaussian random variables frequently induce the need for higher-order moments, see, e.g., \citep{Kuehn16,Singh10}. 
Even in cases where expectation and co-variance capture sufficient information about the uncertainty, the fact that they parameterize random variables in nonlinear fashion---i.e., any scalar Gaussian is given as the sum of its mean and square root of the variance times a standard normal distribution---often complicates their use.

Remarkably, already in the 1930s \textit{Norbert Wiener}'s most cited journal paper proposed an alternative avenue \citep{Wiener38}. Therein, Wiener suggested to represent random variables via series expansions expressed in suitable polynomial bases of the underlying probability space. The required structure is the Hilbert space of $L^2$-random variables, i.e., the linear function space equipped with an inner product which contains all random variables of finite variance. This approach is commonly denoted as \textit{Wiener chaos expansion}, as polynomial chaos, or as generalized polynomial chaos. For the sake of brevity, we gloss here over the subtle distinctions of these methods. The obtained series  are denoted as Polynomial Chaos Expansions~(PCE). Importantly, PCEs and related expansions parameterize a large class of Gaussian and non-Gaussian random variables in a linear structure, while moments lead, in general,  to nonlinear representations. 
Without any detailed elaboration, we remark that polynomial chaos approaches are established for uncertainty quantification and uncertainty propagation~\citep{o2013polynomial,Sullivan15}. Very often one leverages that the finiteness of the PCE is closely related to the appropriate choice of the basis, i.e., a suitable choice allows for a finite series expansion of non-Gaussian random variables~\citep{Xiu02,Muehlpfordt18}. PCEs have also seen widespread and continued use in systems and control, e.g., for system analysis~\citep{Nagy07,Kim13a,Ahbe20}, for stochastic MPC~\citep{Mesbah15a,Mesbah16a,fagiano2012nonlinear},  and for optimization in power systems~\citep{kit:muehlpfordt17b,kit:muehlpfordt18c}. 

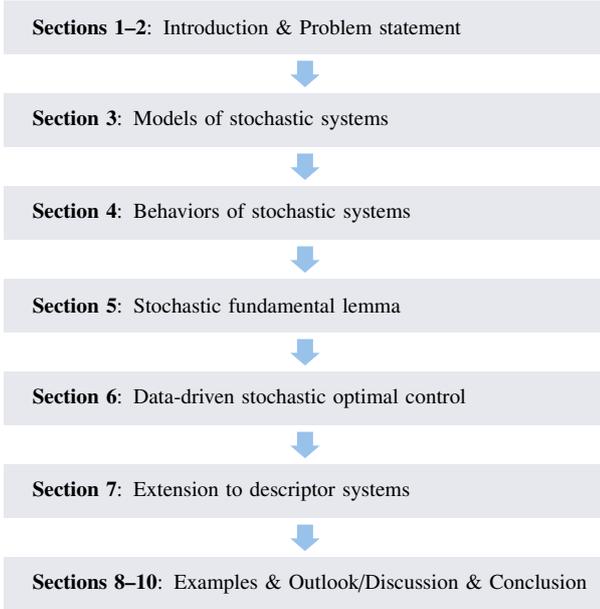
\begin{figure}[t]
    \centering
        \begin{tikzpicture}[my triangle/.style={-{Triangle[width=\the\dimexpr1.8\pgflinewidth,length=\the\dimexpr0.8\pgflinewidth]}}]
    \footnotesize
    \node[rectangle, draw=none, fill=gray1, minimum height=2.5em,  text width=27.5em, outer sep=2.5pt] (S1) {\quad \textbf{Sections 1--2}: Introduction \&
    Problem statement};
    \node[below=10pt  of S1, rectangle, draw=none, fill=gray1, minimum height=2.5em,  text width=27.5em, outer sep=2.5pt] (S3) {\quad\textbf{Section 3}: Models of stochastic  systems};
   \node[below =10pt of S3,rectangle, draw=none, fill=gray1, minimum height=2.5em, text width=27.5em, outer sep=2.5pt] (S4) {\quad\textbf{Section 4}: Behaviors of stochastic systems};
    \node[below =10pt  of S4,rectangle, draw=none, fill=gray1, minimum height=2.5em,  text width=27.5em, outer sep=2.5pt] (S5) {\quad\textbf{Section 5}: Stochastic  fundamental lemma};
   
    \node[below=10pt of S5,rectangle, draw=none, fill=gray1, minimum height=2.5em, text width=27.5em, outer sep=2.5pt] (S7) {\quad\textbf{Section 6}: Data-driven stochastic optimal control};
      \node[below =10pt  of S7, rectangle, draw=none, fill=gray1, minimum height=2.5em, text width=27.5em, outer sep=2.5pt] (S6) {\quad\textbf{Section 7}: Extension to  descriptor systems};
    \node[below =10pt of S6,rectangle, draw=none, fill=gray1, minimum height=2.5em, text width=27.5em, outer sep=2.5pt] (S8) {\quad\textbf{Sections 8--10}: Examples \&  Outlook/Discussion \& Conclusion};

    \draw[line width=6pt,my triangle, draw=lightblue](S1.south) -- (S3);
      \draw[line width=6pt,my triangle, draw=lightblue](S3) -- (S4);  
      \draw[line width=6pt,my triangle, draw=lightblue](S4) -- (S5);
      \draw[line width=6pt,my triangle, draw=lightblue](S5) -- (S7);
      \draw[line width=6pt,my triangle, draw=lightblue](S7) -- (S6);
      \draw[line width=6pt,my triangle, draw=lightblue](S6) -- (S8);

\end{tikzpicture}
    \caption{Illustration of contents.}
    \label{fig:Structure}
\end{figure}

In light of the growing success of behavioral systems theory in data-driven control this  paper moves towards behavioral and data-driven concepts for stochastic linear systems. Hence, it extends the scope of the earlier and excellent overview \citep{MarkDorf21}, which has explicitly called for further investigation of the stochastic setting. Moreover, \cite{MarkDorf21} provide a detailed exposition on issues and remedies surrounding the use of data corrupted by measurement noise and on the estimation of missing data. Hence, we do not revisit these aspects here. Specifically, the outline of the present paper is as follows (see also Figure~\ref{fig:Structure}): 

In Section~\ref{sec:recapI} we recapitulate the deterministic fundamental lemma and we give a problem statement for stochastic systems. Moreover, we provide a concise overview on \textit{Willems}-inspired fundamental-lemma type results. 
To support the further developments, Section~\ref{sec:model_Stochastic} turns to model-based representations of stochastic dynamics, it recalls the basics of \textit{Wiener} chaos expansions and PCE, and it revisits the co-variance propagation for LTI systems.

In Section~\ref{sec:StochBehav} the insights of \textit{Willems and Wiener are combined} to derive our main contributions, i.e., we discuss different behavioral characterizations of stochastic systems. We introduce the behavior of the dynamics of the series expansion coefficients (in short \textit{expansion coefficient behavior}) before we turn to the behavior of the dynamics in random variables (in short \textit{random variable behavior}) and to the  behavior associated to the time-evolution of the statistical moments (in short \textit{moment behavior}). We also introduce the concept of behavioral lifts, i.e., we characterize the underlying relations between the behaviors referring to expansion coefficients, to random variables, and to realizations. Moreover, we show why and how moment characterisations fall short in the behavioral context. 

Section~\ref{sec:StochFundLem} then continues to data-driven representations  via the  fundamental lemma tailored to stochastic systems~\citep{Pan21s} before the results are extended to stochastic descriptor systems in the subsequent section. The crucial observation, which carries over from~\cite{Pan21s}, is that due to the behavioral lift the Hankel matrices can be constructed from measurements of realization data (or sampled/measured trajectories) while the PCE framework allows to capture random variables exactly.

We prove that, under rather mild assumptions, for discrete-time stochastic systems  measurements of finite-dimensional realization data (i.e. sampled trajectories of finite length) suffice to characterize the evolution of stochastic input, state, and output variables via an appropriate image representation obtained in the fundamental lemma for stochastic systems.\footnote{Notice that upon observing or measuring stochastic processes, one obtains information about specific realizations of random variables. Hence, subsequently  \textit{realization} refers to sampled outcomes of the uncertainties as modelled by random variables.   For the remainder of this manuscript, the word realizations should not be confused with the classic notion of state-space realizations of systems.  } 
To put it in the words of \cite{Willems12}, our findings support the viewpoint  that 
\begin{displayquote}
\textit{[in terms of behavioral systems theory and in terms of the fundamental lemma] deterministic systems emerge as special cases of stochastic systems, as they should},
\end{displayquote}
which is also fully aligned with~\cite{Baggio17}.

Section~\ref{sec:StochOCPs} discusses data-driven stochastic optimal control, i.e., the computation of optimal non-anticipatory policies. Further, model-based analysis is put into action to ensure causality of the obtained optimal solutions.

Section~\ref{sec:ext} transfers the obtained insights on the stochastic fundamental lemma and on data-driven stochastic optimal control to descriptor systems.
While from a model-based viewpoint descriptor LTI representations are more general than explicit ones, from the behavioral perspective both are (modulo the choice of inputs and outputs) equivalent representation instances of linear time-invariant systems, cf.~\citep{willems86i,willems07}.
We show that it is the causality requirement of stochastic optimal control that marks the watershed between descriptor and explicit LTI systems in data-driven settings. Put differently, in data-driven settings with underlying descriptor structures the computation of non-anticipatory control actions requires careful analysis.

Section~\ref{sec:Examples} draws upon numerical examples to illustrate our findings. 
The paper closes with an outlook on open problems and conclusions in Sections~\ref{sec:Discussion} and~\ref{sec:Outlook}.

\noindent \textbf{Notation}. %
The non-negative and the positive integers are denoted by $\mathbb{N}$ and $\mathbb Z_+$, respectively.  For $n,m\in\mathbb N$ with $n\leq m$ we define $\mathbb I_{[n,m]}\doteq [n,m]\cap \mathbb N$. For two sets~$\Omega$ and $M$, $\Omega^M$ denotes the set of all mappings $f: M \rightarrow \Omega$.  Further, for two sets $\Omega_1$ and $\Omega_2$, we identify the Cartesian product $\Omega_1^M\times \Omega_2^M$ with $(\Omega_1\times\Omega_2)^M$. The restriction of a function $f\in\Omega^M$ to a subset $M_0\subset M$ is denoted by $f|_{M_0}$. In the case $M=\mathbb I_{[n,m]}$ we use for $f\in\Omega^M$ the notation $f_k\doteq f(k)$ and, when there is no possibility of confusion, also $f^k=f(k)$ for $k\in M$. Moreover, for $f\in \Omega^M$, where $\Omega=\mathbb R^d$ and $M\subset \mathbb N$ such that $\mathbb I_{[n,m]}\subset M$, we define $\mathbf f_{[n,m]}\doteq\begin{bmatrix}f_n^\top & \dots & f_m^\top \end{bmatrix}{}^\top$.
The identity matrix and zero matrix are designated by $I_{n}$ and $0_{n\times m}$. For a matrix $A$ we use the notations $\operatorname{rk}(A)$, $\operatorname{colsp}(A)$, and $A^\dagger$ for the rank, the column span, and the Moore--Penrose inverse of $A$, respectively. The Kronecker product of two matrices $A$ and $B$ is denoted by $A\otimes B$.

\section{Preliminaries and Problem Statement}
\label{sec:recapI}

We first recall some basic concepts of behavioral systems theory and briefly revisit the fundamental lemma. We refer the reader to~\cite{MarkDorf21} for a recent and much-more-detailed survey on behavioral systems theory and its use for data-driven control of deterministic systems. For readers familiar with this survey, the first subsection only sets the notation used hereafter. 
Then, we give a problem statement and point out upcoming novel results in the stochastic setting. Finally, we conclude this section with a concise overview of fundamental-lemma-type results.

From an abstract point of view, a system $\Sigma$ is a triplet $\Sigma=(\mathbb{T}, \mathbb W, \mathfrak B)$ with time axis $\mathbb T\subseteq\mathbb R$, signal space~$\mathbb W$, and behavior~$\mathfrak B\subseteq \mathbb W^{\mathbb T}$. 
Specifically, we consider linear time-invariant discrete-time dynamics represented by
\begin{subequations}\label{sys}
    \begin{align}
        x_{k+1} &= A x_k + B u_k \label{sysa} \\
        y_k &= Cx_k + D u_k, \label{sysb}
    \end{align}
\end{subequations} where $x_k\in\mathbb R^{n_x}$, $u_k\in\mathbb R^{n_u}$, and $y_k\in\mathbb R^{n_y}$ are the state, the input, and the output at time instant $k \in \mathbb N$, respectively. Further, we have $A\in\mathbb R^{n_x\times n_x}$, $B\in\mathbb R^{n_x\times n_u}$, $C\in\mathbb R^{n_y\times n_x}$, and $D\in\mathbb R^{n_y\times n_u}$. In this situation the time-axis is given by $\mathbb T=\mathbb N$, and the signal space by $\mathbb W = \mathbb R^{n_x+n_u+n_y}$ or, if only input and output variables are considered, by $\mathbb W=\mathbb R^{n_u+n_y}$.

Subsequently, we may occasionally deviate from the rigorous behavioral wording, that is whenever no confusion can arise we use the word \textit{system} to  refer to a system or to its representation in synonymous fashion. The dynamics~\eqref{sys} can be described in the language of behavioral systems theory advocated by \cite{PoldermanWillmes98}, see also \cite{Willems91,MarkovskyWillemsHuffelDeMoor06}. The \emph{(full) behavior} of  system~\eqref{sys} is
\begin{equation}
    \mathfrak B_\infty \doteq \left\{(x,u,y)\,\middle|\, \begin{gathered} 
         x\in(\mathbb R^{n_x})^{\mathbb N}, u\in(\mathbb R^{n_u})^{\mathbb N}, y\in(\mathbb R^{n_y})^{\mathbb N}
         \\ (x_{k+1},x_k,u_k,y_k) \text{ satisfies \eqref{sys} } \forall k\in\mathbb N
        \end{gathered}\right\}
\end{equation}
which contains all state-input-output trajectories of~\eqref{sys}. The trajectories of finite horizon $T \in \mathbb N$ are collected in the finite-horizon behavior
\begin{equation}
    \label{finHorbehav}
   \mathfrak B_{T} = \bigl\{b|_{[0,T]}\,\big|\, b\in\mathfrak B_\infty\bigr\}.
\end{equation}
For all $T \in \mathbb{N} \cup \{\infty\}$, the set $\mathfrak B_{T}$ is a vector space. Furthermore, the behavior $\mathfrak B_\infty$ is \emph{complete}, i.e.,\ 
$b|_{[0,T]}
\in \mathfrak B_{T}$ for all $T\in\mathbb N$ implies $b\in\mathfrak B_\infty$.

An appealing aspect of the behavioral perspective is the analysis of properties of a system $(\mathbb{T}, \mathbb W, \mathfrak B)$ can done without reference its representation \eqref{sys}. Specifically,  controllability, cf. \cite{MarkovskyWillemsHuffelDeMoor06} and \cite{WoodZerz99}, can be characterized by the following behavioral definition. 
\begin{definition}[Behavioral controllability] \label{def:behav_controlability}
    $\mathfrak B_\infty$ is said to be \emph{controllable} if for each two trajectories $b$,~$\tilde b\in\mathfrak B_\infty$ and every $T\in\mathbb Z_+$ there exists $b'\in\mathfrak B_\infty$ and $T'\in\mathbb N$ such that \begin{equation}
    \label{intermediate}
    b'|_{[0,T-1]} = b|_{[0,T-1]},\quad b'|_{[T+T',\infty)} = \text{$\tilde{b}|_{[0,\infty)}$}.
\end{equation}
\end{definition}
Vividly, controllability of~$\mathfrak B_\infty$ allows to switch from one trajectory to another, however, with some \emph{transition delay~$T'$}. The behavior~$\mathfrak B_\infty$ is controllable if and only if the system representation~\eqref{sys} is controllable. In this case the transition delay is bounded by $T' = n_x$; independently from the particular trajectories.

In general, the state variable $x$ is latent and only input-output trajectories of~\eqref{sys} are directly accessible through measurements. The input-output trajectories are collected in the \emph{manifest behavior}
\begin{equation*}
    \mathfrak B_\infty^\text{i/o} = \left\{(u,y) \,\middle|\, (x,u,y)\in\mathfrak B_\infty\text{ for some }x\in (\mathbb R^{n_x})^{\mathbb N}\right\},
\end{equation*}
while the input-output trajectories of finite length $T\in\mathbb N$ yield
\begin{equation}
    \label{finHorbehavIO}
 \mathfrak B_{T}^\text{i/o} = \bigl\{b|_{[0,T]}\,\big|\, b\in\mathfrak B_\infty^\text{i/o}\bigr\}.
\end{equation}
Further, there exists a lower bound on the length of input-output trajectories such that the state~$x$ is uniquely determined assuming observability, i.e., if $(x,u,y)$, $(\tilde x, u, \tilde y)\in\mathfrak B_{n_x-1+T}$ for some $T\in\mathbb N$ such that $y|_{[0,n_x-1]}=\tilde y|_{[0,n_x-1]}$, then $x|_{[0,n_x-1+T]} = \tilde x|_{[0,n_x-1+T]}$.

\subsection{The Fundamental Lemma} 

The fundamental lemma by \cite{WRMDM05} proposed a non-parametric representation of linear time-invariant systems by means of Hankel matrices containing only data available through measurements, which carries sufficiently rich information. This richness is specified in the following definition.
\begin{definition}[Persistency of excitation]
    The input trajectory $u:\{0,\dots,T-1\}\rightarrow \mathbb R^{n_u}$ is said to be \emph{persistently exciting} of order $L$, where $L\in\mathbb N$ with $T\geq L(n_u+1) -1$, if the \emph{Hankel matrix}
    \begin{equation*}
        \mathcal H_L(\mathbf u_{[0,T-1]}) \doteq \begin{bmatrix}
            u_0 & \dots &  u_{T-L}\\
            \vdots & \ddots& \vdots\\
            u_{L-1} & \dots & u_{T-1}
        \end{bmatrix}\in\mathbb R^{n_uL\times (T-L+1)}
    \end{equation*}
    has row rank $n_uL$.
\end{definition}
The original statement of the fundamental lemma by \cite{WRMDM05} is given in a behavioral setting. 
Here, we focus only on input-output trajectories. A corresponding statement including the state variables can be derived by imposing stronger assumptions on the output, i.e. $C=I$, cf. \cite{WDPCT20}.
\begin{lemma}[Fundamental lemma]
    \label{lem:FL_des}
    Suppose that the behavior $\mathfrak B_\infty$ of the system $(\mathbb{T}, \mathbb W, \mathfrak B_{\infty})$ is controllable. Let $(u,y)\in\mathfrak B^\text{i/o}_{T-1}$ be such that $u$ is persistently exciting of order $L+n_x$, then $(\tilde u, \tilde y)\in\mathfrak B^\text{i/o}_{L-1}$ if and only if there exists $g\in\mathbb R^{T-L+1}$ such that
    \begin{equation} \label{eq:fundlem_exp}
        \begin{bmatrix}
            \mathbf{\tilde u}_{[0,L-1]}\\
            \mathbf{\tilde y}_{[0,L-1]}
        \end{bmatrix} = 
        \begin{bmatrix}
            \mathcal H_{L} (\mathbf{u}_{[0,T-1]})\\
            \mathcal H_{L} (\mathbf{y}_{[0,T-1]})
        \end{bmatrix} g.
    \end{equation}
\end{lemma}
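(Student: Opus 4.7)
The plan is to prove both directions of the equivalence. The ``if'' direction follows from linearity in one step; the ``only if'' direction reduces to a dimension count whose core is a rank statement about a state-augmented Hankel matrix.

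For the ``if'' direction, every column of the matrix on the right-hand side of~\eqref{eq:fundlem_exp} is a length-$L$ window of the single long trajectory $(u,y)\in\mathfrak B_{T-1}^{\text{i/o}}$ and hence, by time-invariance, itself an element of $\mathfrak B_{L-1}^{\text{i/o}}$. Since the system is linear, $\mathfrak B_{L-1}^{\text{i/o}}$ is a vector space (as noted right after~\eqref{finHorbehavIO}), so any linear combination of those columns, i.e.\ any vector admitting a representation of the form~\eqref{eq:fundlem_exp}, also belongs to $\mathfrak B_{L-1}^{\text{i/o}}$.

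For the ``only if'' direction I would compare dimensions. A length-$L$ input-output trajectory of~\eqref{sys} is parameterized by an initial state in $\mathbb R^{n_x}$ and an input sequence in $(\mathbb R^{n_u})^L$, so $\dim \mathfrak B_{L-1}^{\text{i/o}}\le n_x+Ln_u$. On the data side, I would consider the latent-state Hankel matrix $\bigl[\mathcal H_1(\mathbf x_{[0,T-L]})^\top,\ \mathcal H_L(\mathbf u_{[0,T-1]})^\top\bigr]^\top$ and establish that it has full row rank $n_x+Ln_u$. Since every length-$L$ output window is a fixed linear function of the corresponding $(x_0,u_{[0,L-1]})$ column, through the extended observability matrix and a Toeplitz block built from $A,B,C,D$, the column span of the i/o Hankel matrix in~\eqref{eq:fundlem_exp} coincides with the image of this map, i.e.\ with $\mathfrak B_{L-1}^{\text{i/o}}$. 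The desired $g$ is then obtained by writing any $(\tilde u,\tilde y)$ with a compatible initial state $\tilde x_0$ as an element of the column span of the state-input Hankel matrix and pushing this preimage through the representation map.

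The central obstacle is the full row-rank claim for the state-input Hankel matrix, and it is exactly there that behavioral controllability (Definition~\ref{def:behav_controlability}) and persistence of excitation of order $L+n_x$ are consumed. I would argue by contradiction: a nonzero row-kernel vector annihilates every column $(x_j,u_{[j,j+L-1]})$ for $j\in\mathbb I_{[0,T-L]}$. Controllability lets one concatenate and extend trajectories so that the states $x_j$ sweep out all of $\mathbb R^{n_x}$, while persistence of excitation of order $L+n_x$ forces the input block of the kernel vector to vanish; the surplus $n_x$ in the PE order is precisely the budget needed to subsequently pin down the state component, producing the contradiction. This is the only non-cosmetic step in the argument; everything else is linear algebra glued together by the observation that $\mathfrak B_{L-1}^{\text{i/o}}$ is a finite-dimensional vector space.
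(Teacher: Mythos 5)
The paper itself does not prove Lemma~\ref{lem:FL_des}; it recalls the statement from \cite{WRMDM05} (see also \cite{dePersis19}), so the comparison here is with the standard literature proof rather than with an argument in this text. Your overall architecture is exactly that standard one: the ``if'' direction via linearity and time-invariance of $\mathfrak B_{L-1}^{\text{i/o}}$ is complete and correct, and the reduction of the ``only if'' direction to the full-row-rank claim for the state-input Hankel matrix $\bigl[\mathcal H_1(\mathbf x_{[0,T-L]})^\top,\ \mathcal H_L(\mathbf u_{[0,T-1]})^\top\bigr]^\top$, followed by pushing through the observability/Toeplitz map, is the right skeleton.

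The gap is in the one step you correctly identify as the crux. First, the move ``controllability lets one concatenate and extend trajectories so that the states $x_j$ sweep out all of $\mathbb R^{n_x}$'' is not available: the data $(u,y)$ and hence the latent states $x_j$ are a single \emph{fixed} trajectory, and the rank claim must be established for that fixed data; you cannot enlarge or modify it. (Showing that the fixed states span $\mathbb R^{n_x}$ is essentially equivalent to the rank lemma itself, so invoking it is circular.) Second, the order of elimination is backwards: from $\xi^\top\mathcal H_1(\mathbf x)+\eta^\top\mathcal H_L(\mathbf u)=0$ one cannot conclude that persistency of excitation ``forces the input block $\eta$ to vanish'' while $\xi$ is still unknown, because $\eta^\top\mathcal H_L(\mathbf u)$ need not vanish on its own. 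The actual mechanism shifts the kernel identity in time, substitutes $x_{j+1}=Ax_j+Bu_j$, and uses persistency of excitation of order $L+n_x$ to extract the cascade $\xi^\top B=\xi^\top AB=\dots=\xi^\top A^{n_x-1}B=0$; the Kalman rank condition (the algebraic face of behavioral controllability for the representation \eqref{sys}) then gives $\xi=0$ \emph{first}, and only afterwards does persistency of excitation of order $L$ yield $\eta=0$. So controllability enters algebraically through the reachability matrix, not through trajectory concatenation, and the surplus $n_x$ in the excitation order pays for the $n_x$ shifts in the cascade rather than for ``pinning down the state component'' after the input block has been removed. Without this step your proposal does not close.
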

The appeal of the above results is that the Hankel matrices can be constructed directly from measured data. 

\subsection{Overview of Deterministic Variants of the Lemma}
\label{sec:OverviewFundLem}

\begin{table*}[t]
\caption{Overview of recent fundamental lemma type results. \vspace*{-3mm}
}
\label{tab:overview}
\begin{center}
\scalebox{0.8}{
\begin{tabular}{lll}
\toprule
 System Class & Comments &  References  \\ 
 \midrule
linear&       &\cite{WRMDM05}, \cite{dePersis19}\\
 &    data segmentation & \cite{WDPCT20}\\
  &   affine structure & \cite{Martinelli22}\\
 & uncontrollable & \cite{Yu21} \\
 &  descriptor& \cite{SchmitzFaulwasserWorthmann22} \& Section~\ref{sec:ext}\\
  &   stochastic& \cite{Pan21s} \& Section~\ref{sec:StochFundLem}\\
   &   stochastic, descriptor& Section~\ref{sec:ext}
   \\
  &  linear parameter-varying&     \cite{Verhoek21}\\
  &   linear time-varying   & \cite{Nortmann21} \\
   &   time delay   & \cite{rueda2021data} \\
\midrule
 non-linear &  bi-linear &  \cite{Yuan22}\\
 & differentially flat &\cite{Alsalti21}\\
    & approximate  trajectories via kernel regression &
  \cite{Huang22, lian2021nonlinear}\\
    &     Wiener-Hammerstein & \cite{Berberich20}\\
    &     Volterra systems & \cite{rueda2020data}\\
    &     polynomial nonlinearities & \cite{Markovsky21}\\
    \bottomrule
\end{tabular}
 }
\end{center}
\end{table*}

Naturally, the previous exposition motivates to ask whether the fundamental lemma can be tailored or extended to other settings and how to formalize such extensions? Table~\ref{tab:overview} gives an overview on both aspects. As one can see, there exist manifold variants and extensions of the original result of~\cite{WRMDM05}. This includes more easily accessible proofs in the explicit state-space setting~\citep{dePersis19}, the extension to data segmentation in the Hankel matrices~\citep{WDPCT20}, the relaxation of the usual controllability assumption~\citep{Yu21}, and the consideration of affine structures, i.e. linear dynamics with constant additive inhomogeneities. There also have been recent refinements to linear descriptor systems---see~\cite{SchmitzFaulwasserWorthmann22} and Section~\ref{sec:ext}---and extensions to linear stochastic systems---cf.~\cite{Pan21s} and Section~\ref{sec:StochFundLem}.

Besides the usual LTI setting, there are tailored variants for linear parameter-varying systems~\citep{Verhoek21}, for linear time-varying ones~\citep{Nortmann21}, and for linear time delay systems~\citep{rueda2021data}. Likewise, the nonlinear domain is of considerable research interest as \cite{Alsalti21} cover discrete-time non-linear differentially flat systems,  \cite{lian2021nonlinear,Huang22} suggest using reproducing kernel Hilbert spaces, \cite{Berberich20} investigate Wiener-Hammerstein systems, while \cite{rueda2020data} address second-order Volterra systems. Recently, \cite{Markovsky21} has proposed a general framework for time-invariant systems with polynomial non-linearities. 

We conclude this overview with a crucial observation:  while the issues surrounding measurement noise in fundamental lemma type results have seen manifold research activities---cf.,  e.g., \cite{coulson2019regularized,yin2021data,yin2021maximum}---the question of hand\-ling stochastic uncertainties has received much less attention. Specifically, we mention 
that \cite{Dorfler21} discuss certainty equivalence LQR design. However, while a data-driven surrogate of the closed-loop  state-feedback system matrix $A+BK$ is provided therein, co-variance propagation as such is not discussed. Moreover, the stochastic variant of the lemma  proposed by~\cite{Pan21s} is revisited later. 

\subsection{Problem Statement}
The main object of our further investigations are linear time-invariant systems subject to stochastic uncertainties a typical representation of which reads
    \begin{align*}
    	X\inst{k+1} &= AX\inst{k} +BU\inst{k}+FW\inst{k}\\
   		Y\inst{k} &= C X\inst{k}+ D U\inst{k} + HW\inst{k}.
    \end{align*}
The game changer moving from \eqref{sys} to the representation above are the stochastic disturbances $W$ appearing in the state and output equations. Their occurrence induces the need to model the states $X$, the outputs $Y$, and the controls  $U$ also as appropriate stochastic processes. 
Indeed, one can show   that finite dimensional series descriptions of exogenous stochastic uncertainties enable the formulation of a stochastic fundamental lemma~\citep{Pan21s}.\footnote{A related structure wherein the Hankel matrices are expanded by disturbance data is used by~\cite{Kerz21}. However, therein no means of propagating stochastic uncertainties in data-driven fashion is provided.}
This earlier analysis is largely driven by the properties of the system representations. It does, however, not answer the long standing question of how to conceptualize the behavior of stochastic linear systems, cf. \citep{Willems12,Baggio17}.
Subsequently we address this point and we provide insights on the following aspects:
\begin{itemize}
    \item  How to define and to characterize the behavior of stochastic linear systems (Section \ref{sec:StochBehav})?
    \item How to formulate a stochastic fundamental lemma (Section \ref{sec:StochFundLem})?
        \item How to formulate stochastic optimal control problems in data-driven fashion without jeopardizing causality (Section \ref{sec:StochOCPs})?
    \item How to handle causality and Markov properties if the underlying structure fixes the choice of inputs and outputs and thus descriptor representations arise (Section \ref{sec:ext})?
\end{itemize}
\section{Representations of Stochastic Linear Systems}
\label{sec:model_Stochastic}

As mentioned above, we model the trajectories of the system as stochastic processes. Specifically, let $(\Omega,\mathcal F,\prob)$ be a probability space consisting of the sample space~$\Omega$ (a.k.a.\ set of outcomes), the $\sigma$-algebra~$\mathcal F$, and the probability measure~$\prob$. The space $L^2((\Omega,\mathcal F,\prob),\mathbb R^d)$ consists of (equivalence classes of) vector-valued random variables with realizations in $\mathbb R^d$, $d\in\mathbb N$, and finite variance. For $X$,~$Y\in L^2((\Omega,\mathcal F,\prob),\mathbb R^d)$ the expected value and the covariance matrix are 
\begin{gather*}
    \mean [X] = \int_{\Omega} X(\omega) \,\mathrm d\prob(\omega),\\
    \covar[X,Y] = \mean [XY^\top] - \mean [X] (\mean [Y])^\top.
\end{gather*}
Equipped with the usual scalar product defined by $\langle X,Y\rangle = \mean[X^\top Y]$ and its induced norm given by $\lVert X\rVert=\sqrt{\langle X,X\rangle}$ the space $L^2((\Omega,\mathcal F,\prob),\mathbb R^d)$ is a Hilbert space. 
Recall that for $X,Y\in L^2(\Omega,\mathcal F,\prob,\mathbb R^d)$ the equality $X=Y$ is to be understood in the $L^2$-norm sense, $\lVert X-Y\rVert=0$, or equivalently $X(\omega)=Y(\omega)$ for $\prob$-almost all ($\prob$-a.a.) $\omega\in\Omega$, i.e.\ all $\omega\in\Omega$ but those in a $\prob$-nullset. For stochastic processes $X=(X_k)_{k\in\mathbb N}$,~$Y=(Y_k)_{k\in\mathbb N}\in L^2((\Omega,\mathcal F,\prob),\mathbb R^d)^{\mathbb N}$ the expectation operator $\mean$ and the covariance operator $\covar$ are defined step-wise in time, i.e.\ $\mean [X] = (\mean[X_k])_{k\in\mathbb N}$ and $\covar[X,Y]=(\covar[X_k,Y_k])_{k\in\mathbb N}$.

Subsequently, we assume that the probability space $(\Omega,\mathcal F,\prob)$ is \emph{separable}, i.e., the $\sigma$-algebra $\mathcal F$ is generated by a countable family of sets $\Omega_k \subset \Omega$, $k \in \mathbb{N}$. 
Then $L^2((\Omega,\mathcal F,\prob),\mathbb R^d)$ is a separable Hilbert space and it possesses a countable orthogonal basis, cf.\ \cite{Brezis11}.

To the end of recalling representations of stochastic linear systems, we consider 
\begin{subequations}\label{eq:RVdynamics}
    \begin{align}
    	X\inst{k+1} &= AX\inst{k} +\widetilde BV\inst{k}\\
        \label{eq:RVdynamicsb}
   		Y\inst{k} &= C X\inst{k}+\widetilde D V\inst{k}
    \end{align}
\end{subequations}
where the state, exogenous input, and output signals are modelled as stochastic processes, that is $X_k\in L^2((\Omega,\mathcal F,\prob),\mathbb R^{n_x})$, $Y_k\in L^2((\Omega,\mathcal F,\prob),\mathbb R^{n_y})$, and $V_k\in L^2((\Omega,\mathcal F,\prob),\mathbb R^{n_v})$. Notice that the variable~$V$ contains all exogenous inputs of the system, i.e., the manipulated control inputs~$U$ as well as the exogenous process disturbances~$W$ are related by
\begin{equation}\label{eq:inputsplit}
    \begin{bmatrix}\widetilde B \\
     \widetilde D \end{bmatrix} V = \begin{bmatrix}B & F \\
    D & H \end{bmatrix}\begin{bmatrix}U \\ W \end{bmatrix}.
\end{equation} 
For many of our formal developments, we do not elaborate on the distinction between controls~$U$ and disturbances~$W$ as this fits well to the behavioral viewpoint. Yet, from a stochastic systems point of view further considerations are in order.\vspace*{3mm}

\noindent \textbf{Stochastic Filtrations and the Markov Property}.
    The state variable of system~\eqref{eq:RVdynamics} possesses the Markov property, i.e., the state $X_{k+1}$ depends only on $X_k$ and $V_k$. Specifically, let $X=(X_k)_{k\in\mathbb N}$ be a solution to~\eqref{eq:RVdynamics} together with its natural filtration 
    $(\sigma(X_0,\dots, X_k))_{k\in\mathbb N}$, where $\sigma(X_0,\dots,X_k)$ denotes the $\sigma$-algebra generated by $X_0,\dots, X_k$. We think of a control law which assigns the new input action $U_{k}$ on the basis of the current value of the state $X_k$, i.e.\ $U_{k}=K_k(X_{k})$ with some measurable map $K_k$. For the exogenous noise assume that $W_0,W_1,\dots$ and $X_0$ are mutually independent. Then $X$ satisfies the Markov property
    \begin{equation}
    \label{eq:markov1}
        \prob[X_{k+1}\in\mathcal A\,|\, \sigma(X_0,\dots, X_k)] = \prob[\text{$X_{k+1}$}\in\mathcal A\,|\, \sigma(X_k)]
    \end{equation}
    for every Borel-measurable set $\mathcal A\subset \R^{n_x}$. The conditional probability in \eqref{eq:markov1} (see e.g. \cite{klenke13prob}) describes the chance that the event $\{\omega\in\Omega\,|\, X_{k+1}(\omega)\in \mathcal A\}$ occurs under the (additional) information carried by a sub-$\sigma$-algebra of $\mathcal F$ generated by predecessors of $X_{k+1}$. The Markov property~\eqref{eq:markov1} can be rephrased as
    \begin{align*}
        \prob[X_{k+1} & \in \mathcal A
        \,|\, X_0\in\mathcal A_0,\dots, X_k\in\mathcal A_k]
        = \prob[X_{k+1} \in\mathcal A
        \,|\, X_k\in\mathcal A_k].
    \end{align*}
    Similarly, a Markov property in terms of the output can be shown. Provided that the system~\eqref{eq:RVdynamics} is observable, the value of the latent state can be revealed by an output signal whose length matches the system order $n_x$. Therefore, the stochastic process $\Gamma=(\Gamma_k)_{k\in\mathbb N}= (\mathbf Y_{[k,k+n_x]})_{n\in\mathbb N}$ together with its natural filtration $(\sigma(\Gamma_0,\dots,\Gamma_k))_{k\in\mathbb N}$ is Markovian as well.

\vspace*{3mm}
\noindent \textbf{Realizations and Moments}. 
A common way to handle the stochastic system~\eqref{eq:RVdynamics} is via path-wise dynamics.  The outcome (sample) $\omega \in \Omega$ implicitly defines the realizations 
\[
    x_k=X_k(\omega),\quad y_k=Y_k(\omega), \quad\text{and}\quad v_k=V_k(\omega).
\]
Hence, we arrive at the realization dynamics
\begin{subequations}\label{eq:Realdynamics}
    \begin{align}
    	x\inst{k+1} &= Ax\inst{k} +\widetilde B v\inst{k}, \\
   		y\inst{k} &= C x\inst{k}+\widetilde D v\inst{k}.
    \end{align}
\end{subequations}
Modulo the notation change from $u$  to $v$, respectively, $V$ in \eqref{eq:RVdynamics}, this resembles the previous deterministic system~\eqref{sys}.
Put differently, any realization (i.e.\ sampled trajectory) triplet $(x, v, y)$ satisfies~\eqref{eq:Realdynamics}. Yet, without oracle-like knowledge of future realizations of $v_i = V_i(\omega), i \geq k$, and leaving sampling-based approaches aside, the realization dynamics are mostly helpful in an a-posteriori sense.

Alternatively, passing over to statistical moments also allows to describe the system dynamics. Even though for moments of first order the equations remain linear, higher-order moments result in non-linear equations with respect to the random variables. Specifically, for first- and second-order moments we obtain the usual  propagation
\begin{subequations}\label{eq:Momentdynamics}
    \begin{align}
    	 \mean [X\inst{k+1}] &= A\mean[X\inst{k}] +\widetilde B \mean[V\inst{k}]\\
   		\mean[Y\inst{k}] &= C \mean[X\inst{k}]+\widetilde D \mean[V\inst{k}] \\
		\covar[X_{k+1}, X_{k+1}] &= \begin{bmatrix} A \\ \widetilde B \end{bmatrix}^\top \begin{bmatrix} \covar[X_{k}, X_{k}]\phantom{^\top} & \covar[X_{k}, V_{k}] \\ \covar[X_{k}, V_{k}]^\top  & \covar[V_{k}, V_{k}]\end{bmatrix}\begin{bmatrix} A \\ \widetilde B\end{bmatrix}\\
		\covar[Y_{k}, Y_{k}] &= \begin{bmatrix} C \\ \widetilde D\end{bmatrix}^\top \begin{bmatrix} \covar[X_{k}, X_{k}]\phantom{^\top} & \covar[X_{k}, V_{k}] \\ \covar[X_{k}, V_{k}]^\top  & \covar[V_{k}, V_{k}]\end{bmatrix}\begin{bmatrix} C \\ \widetilde D\end{bmatrix}.
    \end{align}
\end{subequations}

\subsection{Representations of $L^2$-Random Variables}

At this point, it is fair to ask which benefits the chosen setting of $L^2$-random variables actually implies. 
As we will see below, this approach enables a linear representation of Gaussian and non-Gaussian random variables, which turns out to also be numerically tractable---under suitable assumptions. 

In the following and whenever there is no ambiguity, we use the short-hand notation $L^2(\Omega, \mathbb{R}^d)$ instead of $L^2((\Omega,\mathcal F,\prob),\mathbb R^d)$. The space $L^2(\Omega,\mathbb R^{d})$ can be identified with the orthogonal sum 
\[
    \bigoplus_{i=1}^{d} L^2(\Omega,\mathbb R).
\] 
Therefore, any fixed scalar-valued orthogonal basis $(\phi^i)_{i\in\mathbb N}$ of $L^2(\Omega,\mathbb R)$ gives rise to a series expansion for each random vector $X \in L^2(\Omega,\mathbb R^d)$, i.e.\ $X=\sum_{i\in\mathbb N} \mathsf x^i \phi^i$, where the series converges in the $L^2$-sense. The coefficients are uniquely determined by the quotient
\[
    \mathsf x^i =\dfrac{1}{\mean[\phi^i\phi^i]}\mean [\phi^i X]\in \mathbb R^{d}.
\] 
Moreover, the sequence of coefficients $\mathsf x=(\mathsf x^0,\mathsf x^1,\dots)$ is square summable, i.e.
\[
    \mathsf x\in \ell^2(\mathbb R^d)\doteq \left\{\mathsf{\tilde x}\in (\mathbb R^d)^{\mathbb N}\,\middle|\, \sum_{i\in\mathbb N} (\mathsf{\tilde x}^i)^\top \mathsf{\tilde x}^i<\infty\right\}.
\]
Using this notation, we obtain
\begin{equation}\label{expansion}
	Z_k = \sum_{i\in\mathbb N} \mathsf z_k^i \phi^i \quad\text{with}\quad \mathsf z_k^i = \frac{\mean [\phi^i Z_k]}{\mean[\phi^i\phi^i]},\
	\mathsf z_k \in \ell^2(\mathbb R^{n_z}),\quad 
\end{equation}
for $(Z,\mathsf z, n_z) \in \{(X,\mathsf x, n_x), (V,\mathsf v, n_v), (Y,\mathsf y, n_y)\}$ and all $k \in \mathbb N$. The $L^2$-formulation of the dynamics~\eqref{eq:RVdynamics} combined with \eqref{expansion} gives
\begin{subequations}\label{eq:Coeffdynamics}
    \begin{align}
    	 \mathsf x^i\inst{k+1} &= A\mathsf x^i\inst{k} +\widetilde B\mathsf v^i\inst{k}\\
   		\mathsf y^i\inst{k} &= C \mathsf x^i\inst{k}+\widetilde D \mathsf v^i\inst{k}
    \end{align}
\end{subequations}
for all $k\in\mathbb N$ and~$i\in\mathbb N$. The transition from~\eqref{eq:RVdynamics} to~\eqref{eq:Coeffdynamics} using~\eqref{expansion} is referred to as \textit{Galerkin projection} as it technically means to project~\eqref{eq:RVdynamics} onto the basis functions $\phi^i$ used in \eqref{expansion}, cf.\ \cite{GhanSpan03, Pan21s}.

To sum up and as depicted in Figure~\ref{fig:modelOverview}, the stochastic dynamics~\eqref{eq:RVdynamics} admit at least four different model-based representations:
\begin{itemize}
    \item in $L^2$-random variables \eqref{eq:RVdynamics},
    \item in realizations~\eqref{eq:Realdynamics}, i.e.\ sampled trajectories, 
    \item in statistical moments \eqref{eq:Momentdynamics}, and
    \item in series expansion coefficients \eqref{eq:Coeffdynamics}. 
\end{itemize}
At this point it is fair to ask for the advantages and disadvantages of the representations listed above. The dynamics in $L^2$-random variables~\eqref{eq:RVdynamics} are a very general setting, which goes well beyond the usual linear-Gaussian setting. However, while conceptually allowing for exact forward propagation, any numerical implementation has to address the fact that $L^2$-random variables as such are, in general, infinite-dimensional objects living in a Hilbert space. 

In contrast, the realization dynamics \eqref{eq:Realdynamics} are finite dimensional. Yet, forward propagation via \eqref{eq:Realdynamics} requires either unrealistic knowledge of future disturbance realizations or  sampling-based strategies~\citep{campi2009scenario,campi2018introduction}. The latter are subject to approximation errors and often do not scale particularly well for non-Gaussians. The representation in terms of statistical moments~\eqref{eq:Momentdynamics} is a standard approach in systems and control. One should note that most frequently only the first two moments (expectation and covariance) are considered as this suffices for Gaussians. Yet, in non-Gaussian settings this might induce a loss of information. Moreover, one may regard the research efforts around distributional robustness and Wasserstein uncertainty sets as an implicit indication of the pitfalls of moment-based characterizations of stochastic distributions~\citep{givens1984class,mohajerin2018data}. We show in Section~\ref{sec:StochBehav} that in terms of behavioral characterizations of stochastic systems, moments are of limited usefulness. 
Also notice that for the linear stochastic system~\eqref{eq:RVdynamics}, the dynamics of the higher-order moments are of different structure then the original system. In contrast, for the dynamics of series expansions coefficients obtained via Galerkin projection, i.e. \eqref{eq:Coeffdynamics}, we observe that for all expansion coefficients the structure of the dynamics remains the same as the original dynamics in random variables \eqref{eq:RVdynamics} and as the realization dynamics \eqref{eq:Realdynamics}. As such this is not surprising as linear dynamics are combined with the linear structure of a series expansion. Additionally, as we discuss below, under rather mild assumptions and for a rich class of stochastic distributions the exactness of the series expansion with finitely many terms can be guaranteed.

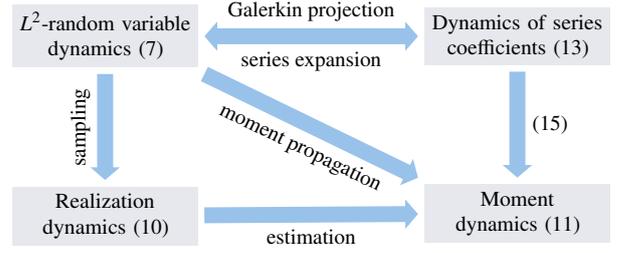
\begin{figure}[t]
    \centering
        \begin{tikzpicture}[my triangle/.style={-{Triangle[width=\the\dimexpr1.8\pgflinewidth,length=\the\dimexpr0.8\pgflinewidth]}}, my double triangle/.style={{Triangle[width=\the\dimexpr1.8\pgflinewidth,length=\the\dimexpr0.8\pgflinewidth]}-{Triangle[width=\the\dimexpr1.8\pgflinewidth,length=\the\dimexpr0.8\pgflinewidth]}}]
    \footnotesize
    \node[rectangle, draw=none, fill=gray1, minimum height=2.5em, text centered, text width=8em, outer sep=2.5pt] (A) {$L^2$-random\ variable\ dynamics~\eqref{eq:RVdynamics}};
    \node[right=80pt of A, rectangle, draw=none, fill=gray1, minimum height=2.5em, text centered, text width=8em, outer sep=2.5pt] (B) {Dynamics of series coefficients~\eqref{eq:Coeffdynamics}};
    \node[below= 40pt of A,rectangle, draw=none, fill=gray1, minimum height=2.5em, text centered, text width=8em, outer sep=2.5pt] (C) {Realization dynamics~\eqref{eq:Realdynamics}};
    \node[below=40pt of B,rectangle, draw=none, fill=gray1, minimum height=2.5em, text centered, text width=8em, outer sep=2.5pt] (D) {Moment \mbox{dynamics~\eqref{eq:Momentdynamics}}};
    
    \draw[line width=6pt,my double triangle, draw=lightblue](A) -- (B) node [above, midway] (n1) {Galerkin projection};
    \draw[line width=6pt,my double triangle, draw=none](A) -- (B) node [below, midway] (n2) {series expansion};
    \draw[line width=6pt,my triangle, draw=lightblue](A) -- (C) node [midway,sloped, above, rotate=180] (n3) {sampling};
    \draw[line width=6pt,my triangle, draw=lightblue](C) -- (D) node [midway, below] (n4) {estimation};
    \draw[line width=6pt,my triangle, draw=lightblue](B) -- (D) node [midway, right] (n5) {\eqref{eq:moments_PCE}};
    \draw[line width=6pt,my triangle, draw=lightblue](A.south east) -- (D.north west) node [midway, sloped, below] (n6) {moment propagation};
\end{tikzpicture}
    \caption{Model-based representations of stochastic systems used in this paper and their relation to each-other.}
    \label{fig:modelOverview}
\end{figure}

\subsection{Exactness and Polynomial Chaos Expansions}
\label{sec:PCE}

From a numerical and from a conceptual point of view, it is desirable if series expansions are of finite order, i.e., that the random variables are represented by finitely many basis functions.
\begin{definition}[Exact series expansions] \label{def:exactPCE}
	We say a function $Z\in L^2(\Omega,\mathbb R^d)$ admits an \emph{exact series expansion of order $p\in\mathbb Z^+$} if the expansion coefficients $\mathsf z^i=\mean[\phi^i Z]/\mean[\phi^i \phi^i]$ vanish for all $i\geq p$, i.e.
	\begin{equation*}
		Z = \sum_{i=0}^{p-1} \phi^i \mathsf z^i.
	\end{equation*}
\end{definition}
\noindent Observe that in the above definition, the series expansion stops at a finite order. Put differently, for $i >p-1$, the coefficients satisfy~$\mathsf{z}^i = 0$ while in the formal statement~\eqref{expansion} we have $i \in \mathbb N$. Hence, one may ask if such finite series representations can be attained. 

A popular approach for uncertainty quantification and uncertainty propagation are Polynomial Chaos Expansions (PCE), which date back to \cite{Wiener38}. Given a family $\mathfrak F$ of \emph{basic} random variables the pivotal idea of PCE is to choose a specific orthogonal system $(\phi^i)_{i\in\mathbb N}$ consisting of polynomials with indeterminates in $\mathfrak F$. 
 
Consider, e.g., a finite or countable family~$\mathfrak F$ of independent normally distributed random variables. Each random variable in $\mathfrak F$ possesses moments of all orders. For a finite selection of random variables of $\mathfrak F$ mixed moments are, due to stochastic independence, simply the product of the individual moments. Therefore, for $n\in\mathbb N$, the set
\begin{equation*}
	\Pi_n =\left\{ \pi  (\xi_1,\dots, \xi_m)\,\middle|\, \begin{gathered} 
	    \text{$\pi$ is a $m$-variate real polynomial}\\
	    \text{ with degree at most $n$},\\  
	    m \in \mathbb N,\ \xi_1,\dots,\xi_m\in\mathfrak F
    \end{gathered} \right\}
\end{equation*}
is a linear subspace of $\spl$. The space $\Pi_0$ consists of almost surely constant random variables, and all elements of $\Pi_1$ are normally distributed. For $n>1$ the space $\Pi_n$ contains also non-Gaussian random variables. Using the Gram--Schmidt process one can construct an orthogonal basis $(\phi^i)_{i\in\mathbb N}$ of $\bigcup_{n=0}^\infty \Pi_n$. In the simplest nontrivial case where $\mathfrak F$ consists of one standard normally distributed random variable $\xi$ this can be given in terms of Hermite polynomials, that is $\phi^i = H_i(\xi)$, where $H_i$ is the $i$th Hermite polynomial. Moreover, the theorem by \cite{cameron1947orthogonal} states that
\begin{equation}
	\label{pce}
	L^2((\Omega,\sigma(\mathfrak F), \prob), \mathbb R) = \closure\left(\bigcup_{n=0}^\infty \Pi_n\right) = \closure\left(\operatorname{span}{(\phi^i)}_{i\in\mathbb N}\right),
\end{equation}
where $\sigma(\mathfrak F)$ is the $\sigma$-algebra generated by the family $\mathfrak F$ and $\closure$ denotes the topological closure. As a consequence every random variable in $L^2((\Omega,\sigma(\mathfrak F), \prob),\mathbb R)$ admits an $L^2$-convergent series expansion in terms of the orthogonal polynomial basis $( \phi^i )_{i\in\mathbb N}$. This series is referred to as Wiener--Hermite polynomial chaos expansion.  We note that in general only $\sigma(\mathfrak F)\subset \mathcal F$ holds, that is, 
\[
L^2((\Omega,\sigma(\mathfrak F), \prob), \mathbb R)\subset L^2((\Omega,\mathcal F, \prob), \mathbb R).
\]
In particular, the orthogonal system $(\phi^i)_{i\in \mathbb N}$ does not span the Hilbert space $L^2((\Omega, \mathcal F,\prob),\mathbb R)$, but rather $L^2((\Omega, \sigma(\mathfrak F),\prob),\mathbb R)$.
Furthermore, every random variable in $\Pi_n$ admits an exact series expansion. Note that the orthogonal basis $(\phi^i)_{i\in\N}$ of $L^2((\Omega, \sigma(\mathfrak F), \prob),\mathbb R)$ can be always chosen such that $\phi^0 = 1$ and, hence due to orthogonality, $\mean[\phi^i] = \mean[\phi^i \phi^0]=0$ for all $i>0$.

Under certain conditions regarding the underlying distributions also a family $\mathfrak F$ of non-Gaussian basic random variables allows the construction of an orthogonal basis $(\phi^i)_{i\in\mathbb N}$ consisting of polynomials in $\mathfrak F$ such that \eqref{pce} holds. Such a polynomial basis, which we refer to as \textit{PCE basis}, allows to expand each function in $L^2((\Omega,\sigma(\mathfrak F), \prob),\mathbb R)$ into a convergent series. For more details on this so-called generalized polynomial chaos expansion, in particular for its convergence properties, we refer the reader to \cite{ErnstMuglerStarkloffUllmann12,Sullivan15,xiu2010numerical}. For details on truncation errors and error propagation we refer to \cite{Field04} and to \cite{Muehlpfordt18}. Fortunately, random variables that follow some widely used distributions admit exact finite-dimensional PCEs with only two terms in suitable polynomial bases. For Gaussian random variables Hermite polynomials are preferable. For other distributions, we refer to Table~\ref{tab:askey_scheme} for the usual basis choices that allow exact PCEs \citep{Koekoek96, Xiu02}.
Observe that in order to use the PCE framework, one needs to specify the basis functions \textit{and} their random-variable arguments. Hence Table~\ref{tab:askey_scheme} also lists those arguments.
\begin{table}[t] 
\caption{Correspondence of random variables, underlying orthogonal polynomials, and their arguments.}
\label{tab:askey_scheme}
\centering
\scalebox{0.85}{
\begin{tabular}{ccc c}
	\toprule
	Distribution  & Support & Orthogonal basis & Argument\\
	\midrule
	Gaussian & $(-\infty, \infty)$ & Hermite &$ \xi \sim \mathcal N(0,1) $ \\
	Uniform  & $[a,b]$ & Legendre & $\xi \sim \mathcal U ([-1,1])$\\
	Beta & $[a,b]$ & Jacobi &$ \xi \sim \mathcal{B}(\alpha,\beta,[-1,1])$\\
	Gamma & $(0,\infty)$ & Laguerre & $ \xi \sim \Gamma(\alpha,\beta,(0,\infty))$ \\
	\bottomrule
\end{tabular}
	\vspace*{-3mm}
}
\end{table}

Before concluding this part we give an example to show how exactness preserving bases for polynomial mappings can be constructed. We also comment on the relation of PCE and reproducing kernel Hilbert spaces.  
\begin{example}[PCE basis construction]
    For the sake of illustration, consider a scalar-valued map $f:L^2(\Omega, \mathbb{R}) \to L^2(\Omega, \mathbb{R})$
    \[
        Y = f(X) = X^2.
    \]
    Let $X$ be Gaussian. Then, it admits the exact PCE
    \[
        X = \mathsf x^0 \phi^0 + \mathsf x^1 \phi^1,
    \]
    where $\phi^0 =1$ and $\phi^1 = \xi$, $\xi \sim \mathcal N (0,1)$. 
    Consider $X^2$ which in terms of  the PCE of $X$ reads
    \[
    X^2 = \left( \mathsf x^0 \phi^0\right)^2 + 2 \mathsf x^0 \mathsf x^1 \phi^0\phi^1 + \left(\mathsf x^1 \phi^1\right)^2.
    \]
    Apparently, the last term cannot be expressed in the subspace spanned by $\phi^0, \phi^1$. 
    Hence, we extend the bases for $Y$ by $\phi^2 = \xi^2-1$ which is the third Hermite orthogonal polynomial. The exact PCE of $Y$ reads
    \[     Y =\left( \left(\mathsf{x}^0\right)^2 +\left(\mathsf{x}^1\right)^2 \right)\phi^0 + 2 \mathsf{x}^0 \mathsf{x}^1 \phi^1 + \left(\mathsf{x}^1 \right)^2\phi^2.
    \]
    Further details on how the structure of explicit polynomial maps can be exploited to construct exactness preserving bases are given by \cite{Muehlpfordt18}.
\end{example}

\begin{example}[Structural uniformity of the PCE]\label{ex:PCE_uniformity}
    Consider a sequence $(\xi_i)_{i\in\mathbb N}$ of i.i.d.\ standard normally distributed random variables. Let $X_k = \begin{bmatrix}
        \xi_{2k} & \xi_{2k+1}
     \end{bmatrix}^\top$, that is the $X_k$ are i.i.d.\ as well. Given $k \in \mathbb N$, for the PCE $X_k = \sum_{i\in\mathbb N} \mathsf x_k^i \xi_i$ we find $\mathsf x_k^i = \begin{bmatrix}
      1 & 0
     \end{bmatrix}^\top$ for $i=2k$, $\mathsf x_k^i = \begin{bmatrix}
      0 & 1
     \end{bmatrix}^\top$ for $i=2k+1$ and $\mathsf x_k^i = 0$ else. One observes that despite some shift the PCE coefficient sequences $(\mathsf x_k^i)_{i\in\mathbb N}$ and $(\mathsf x_j^i)_{i\in\mathbb N}$ corresponding to $X_k$ and $X_j$, respectively, share the same structure. 
\end{example}

\begin{remark}[Link of PCE and RKHS]
	Besides the fundamental lemma, data-driven control using Reproducing Kernel Hil\-bert Spaces (RKHS) is also of tremendous interest, see, e.g., \cite{yan2018gaussian,Zhang20}. A well-known example of an RKHS are Gaussian Processes which are often used as function approximators in machine learning~\citep{Rasmussen, Muandet17}. Hence, it is natural to ask for the link between RKHS and the $L^2$-probability spaces considered here. In general, an $L^2$-space is a Hilbert space of equivalence classes of functions but not a Hilbert space of functions. Thus, without further refinement an $L^2$-probability space is not an RKHS. 
	
	A straightforward observation, which points towards this issue, is that if two random variables have the same PCE in a given basis this does not mean they are point-wise identical. Indeed, they could differ on sets of measure zero,	e.g., in terms of countable many outcomes~$\omega$ with probability zero. However, using the concept of exact PCEs, one may close the gap, i.e.,  random variables in exact and finite PCE bases are connected to learning methods in RKHS. Consider the case where the family $\mathfrak F$ consists of one standard normally distributed random variable.  Then, the underlying Hermite polynomials $\{H_i\}_{i\in\mathbb N}$ give rise to an RKHS  induced by a \emph{Mehler kernel}
	\begin{equation*}
		k_\alpha(t_1,t_2) \doteq\sum_{n\in\mathbb N} \alpha^n\frac{H_n(t_1)H_n(t_2)}{n!} = \frac{\operatorname{exp}\left(t_2^2 - \frac{(t_2-2t_1\alpha)^2}{1-4\alpha^2}\right)}{\sqrt{1-4\alpha^2}}
	\end{equation*}
	for $\alpha\in(0,1)$, cf. \citep[p.196]{Rainville60}.
\end{remark}

\subsection{Moments, Densities, and Conditional Probabilities in PCE}

Almost surely, the careful reader has  recognized that  the overview of Figure~\ref{fig:modelOverview} does not mention the modeling of stochastic dynamics in terms of probability densities which is, e.g., common in case of Markov chains on measurable spaces, i.e., whenever the densities are treated as state variables. In a purely Gaussian and linear setting the moment description~\eqref{eq:Momentdynamics} allows capturing the density evolution, while in general non-Gaussian and nonlinear cases, it is difficult to analytically capture the evolution of densities. Hence, we now briefly discuss how moments and densities can be accessed in the PCE framework. 
\vspace*{1mm}

\noindent\textbf{Moments and PCEs}.
Consider $X$,~$Y\in L^2((\Omega,\mathcal F,\prob),\mathbb R^d)$ with exact PCEs of order $p$, cf. Definition~\ref{def:exactPCE}, with respect to the orthogonal system $(\phi^i)_{i\in\N}$, where $\phi^0 =1$. The expected value and the covariance  of $X$ and $Y$ can be obtained from the PCE coefficients as 
\begin{equation} \label{eq:moments_PCE}
    \mean [X] = \mathsf{x}^0, \quad  \covar[X,Y] = \sum_{i=1}^{p-1} \mathsf{x}^i \mathsf{y}^{i\top}{\mean[\phi^i\phi^i]}.
\end{equation}
For further insights into the computation of higher-order moments from PCEs we refer to \cite{Lefebvre20}.\vspace*{1mm}

\noindent\textbf{Probability Densities and PCE}.
For the sake of illustration, consider $X \in L^2((\Omega,\mathcal F,\prob), \mathbb R)$. Let $Y = f(X)$ be the image of the non-Gaussian random variable $X$ with the PCE
    \[
        X= \sum_{i=0}^{p_x-1} \mathsf x^i \phi^i,\quad \phi^i= H_i(\xi)
    \]
in the Hermite polynomial basis with standard normally distributed argument, i.e.\ $\xi\in\mathcal N(0,1)$. A straight-forward sampling-based strategy to approximate the density of $Y=f(X)$ over $\Omega = \mathbb R$ is given by
\[
    Y(\omega) = f\left(\sum_{i=0}^{p_x-1} \mathsf x^i \phi^i({\omega})\right) = f\left(\sum_{i=0}^{p_x-1} \mathsf x^i H_i({\xi(\omega)})\right).
\]
That is, one samples realizations $\xi(\omega)$, $\omega \in\mathbb R$, of the argument and, then, evaluates~$f$ to obtain samples of the realizations of the non-Gaussian random variable $Y(\omega)$. Without further elaboration we remark that structural knowledge on $X$ or $f$ can and should be exploited. Moreover, we observe a very helpful property of PCEs, i.e., the series expansion separates  the real-valued and deterministic series coefficients $\mathsf x^i$ from the basis functions~$\phi^i$ which capture the stochasticity.
\vspace*{1mm}

\noindent \textbf{Conditional Probabilities and PCE}.
Another aspect that deserves clarification in the PCE framework is how to capture conditional probabilities, which are of major importance in many systems and control contexts.
To this end, consider $Y = X + W$, where $X \in L^2((\Omega,\mathcal F,\prob), \mathbb R)$ and $W \in  L^2((\Omega,\mathcal F,\prob), \mathbb R)$ are independent. Let the PCEs of $X, W$ be given as
\[
    X = \mathsf x^0 \phi^0_x + \mathsf x^1 \phi^1_x \quad \text{and} \quad W = \mathsf w^0 \phi^0_w + \mathsf w^1 \phi^1_w,
\]
where the subscripts $\cdot_x$, $\cdot_w$ in $\phi^i_x$ and $\phi^i_w$, $i =0,1$ emphasize that the arguments of the PCE bases are independent random variables as    $X$ and $W$ are independent.

We are interested in the conditional probability 
\[
    \prob[Y \in \mathbb{Y} \,|\, W= w]
\]
for some given set $\mathbb Y \subset \mathbb R$ and given $w \in \mathbb R$. 
Suppose that $W(\tilde \omega) = w$, i.e., $\tilde \omega \in \Omega$ is the unique outcome (sample) which realizes the event $ W= w$.
Then,  we have
\begin{align*}
\prob\left[Y \in \mathbb{Y} \,|\, W = w\right] &= 
\prob\left[Y  \in \mathbb{Y} 
\,|\, 
 \mathsf w^0 \phi^0_w + \mathsf w^1 \phi^1_w = w\right] \\
&=\prob\left[\left(X+ \mathsf w^0 \phi^0_w(\tilde \omega) + \mathsf w^1 \phi^1_w(\tilde \omega)\right)  \in \mathbb{Y}\right]
 \\
&=\prob\left[\left(\mathsf x^0 \phi^0_x + \mathsf x^1 \phi^1_x + w\right) \in \mathbb{Y}\right] .
\end{align*}
Observe that from the first to the second line, we replace the conditional probability with an unconditional one in which the PCE for $W$ is evaluated to meet the condition. 
Put differently, in the PCE framework conditional probabilities can be captured by evaluating the bases functions corresponding to the condition at the considered outcomes---in the example the basis $\phi^i_w$ is evaluated at $\tilde \omega$---while still regarding the arguments $\xi_x$ of the other bases polynomials as random variables.
\vspace*{2mm}

Concluding this excursion, we note that combining the conceptual ideas expressed in the two examples above illustrates how conditional densities are captured in the PCE framework. Yet, in the general case they might not be directly and analytically accessible, while numerical approximation is immediate. Importantly, provided the PCE propagation is exact in the sense of Definition~\ref{def:exactPCE}, no information about unconditional and conditional densities is lost. 
We conclude this part by pointing the interested reader to the introductory text by~\cite{o2013polynomial} which provides insights into the relation of PCE to other techniques such as Karhunen-Loève expansions or Gram-Charlier series. 
\section{Behaviors of Stochastic Linear Systems} \label{sec:StochBehav}

The previous section has recalled representations of stochastic systems in conceptually different settings, cf.\ Figure~\ref{fig:modelOverview}. Next we establish the connections between these settings and the behavioral framework briefly touched upon in Section~\ref{sec:recapI}.
Doing so, we observe that
\textit{deterministic systems emerge as special cases of stochastic systems, as they should}~\citep{Willems12}.

To this end, an intrinsically algebraic approach would aim at  stochastic kernel representations such as the one discussed in \eqref{eq:detKernelBehav} in Section \ref{sec:ext}. Extending the earlier discussion of \cite{Willems12}, \cite{Baggio17} pursued exactly this route for stochastic LTI systems regarding the stochastic effects as an affine inhomogeneity of a suitable kernel representation.\footnote{Moreover, \cite{Willems12} and \cite{Baggio17} explore in-depth how underlying behavioral relations affect the construction of appropriate---so-called coarse---sigma-algebras and how one can formalize interconnections of open systems accordingly. We do not detail these aspects and instead refer to the mentioned original references} In contrast, we do not rely on kernel representations. Instead we define the behavior of the series expansion representation and of the original stochastic system as appropriate high-dimensional lifts of the underlying deterministic (realization) behavior.

\subsection{Behavioral Representations}
Henceforth and with slight abuse of notation, $\mathfrak B_\infty$, $\mathfrak B^\text{i/o}_\infty$, $\mathfrak B_T$, $\mathfrak B^\text{i/o}_T$ refer to the full and the manifest, respectively, the finite-horizon and the manifest finite-horizon behaviors of~\eqref{eq:Realdynamics}. In short, we say that the realization dynamics~\eqref{eq:Realdynamics} generate the \textit{realization behavior} $\mathfrak B_\infty$, whereby in view of \eqref{eq:inputsplit} the variable $v$ combines controls and disturbances. \vspace{2mm}

\noindent \textbf{The Behavior of the Expansion Coefficients}. %
One approach to a behavioral description of the stochastic system~\eqref{eq:RVdynamics} is via the coefficients of the series expansion of the random variables in terms of a fixed scalar-valued orthogonal basis $( \phi^i )_{i\in\mathbb N}$. We emphasize, that at this point $(\phi^i)_{i\in\mathbb N}$ can conceptually be any orthogonal basis of the space $L^2(\Omega,\mathbb R) = L^2((\Omega,\mathcal F,\prob),\mathbb R)$. Formally, it does not need to be associated with a PCE as described in Subsection~\ref{sec:PCE}.

We first discuss the behavior of expansion coefficients induced by the dynamics \eqref{eq:Coeffdynamics}. To this end, consider the expansion coefficients
\[
    \mathsf z_k = (\mathsf z_k^0, \mathsf z_k^1,\mathsf z_k^2,\dots) =  (\mathsf z^i_k)_{i\in\mathbb N}
    , \quad \mathsf z_k \in \{ \mathsf x_k, \mathsf v_k, \mathsf y_k\},
\]
where we collect for a given time instant $k$ the coefficients corresponding to the basis vectors $\phi^i$ in a sequence. Note that $\mathsf z = (\mathsf z_0,\mathsf z_1,\mathsf z_2,\dots) = (\mathsf z_k)_{k\in\mathbb N}$ is a sequence with respect to time, whereby each element is a sequence. A natural way to think of $\mathsf z$ is in a scheme where its elements are arranged in a infinite-dimensional matrix with one axis representing time and the other representing the series expansion index,
\begin{equation*}
    \mathsf z = \bigl((\mathsf z_k^i)_{i\in\mathbb N}\bigr)_{k\in\mathbb N}\simeq \hspace{-3.5ex}\parbox[c]{27ex}{\hspace*{3ex}\begin{tikzpicture}[my triangle/.style={-{Triangle[width=\the\dimexpr1.8\pgflinewidth,length=\the\dimexpr0.8\pgflinewidth]}}]
    \node (M) {$\displaystyle\begin{bmatrix}
        \mathsf z_0^0 & \mathsf z_0^1 & \dots & \mathsf z_0^i & \dots\\[0.5em]
        \mathsf z_1^0 & \mathsf z_1^1 & \dots & \mathsf z_1^i& \dots\\
        \vdots & \vdots & \ddots &\vdots & \\
        \mathsf z_k^0 & \mathsf z_k^1 & \dots & \mathsf z_k^i& \dots\\
        \vdots & \vdots & & \vdots & \ddots
    \end{bmatrix}$};
    \draw[line width=6pt,my triangle, draw=lightblue] ([yshift=-10pt]M.north west) -- ([yshift=10pt]M.south west) node[midway, rotate=90] {\scriptsize time};
    \draw[line width=6pt,my triangle, draw=lightblue] ([xshift=10pt]M.north west) -- ([xshift=-10pt]M.north east) node[midway] {\scriptsize expansion index};
\end{tikzpicture}}.
\end{equation*}
We define the full and the manifest \emph{behavior of the dynamics of the expansion coefficients}
by
\begin{subequations} \label{eq:CoeffBehav}
\begin{align}
    \mathfrak C_\infty &\doteq\left\{(\mathsf x,\mathsf v,\mathsf y)\,\middle |\,\begin{gathered} 
    \mathsf x\in (\ell^2(\mathbb R^{n_x}))^{\mathbb N}, \mathsf v\in (\ell^2(\mathbb R^{n_v}))^{\mathbb N},\\ \mathsf y\in (\ell^2(\mathbb R^{n_y}))^{\mathbb N} \text{ with } \\
         (\mathsf x^i,\mathsf v^i, \mathsf y^i)\in\mathfrak B_\infty\text{ for all } i\in\mathbb N \end{gathered}\right\},
         \end{align}
         \begin{align}
    \mathfrak C_\infty^\text{i/o} &\doteq\left\{ (\mathsf v,\mathsf y) \,\middle|\, (\mathsf x,\mathsf v,\mathsf y)\in\mathfrak C_\infty\text{ for some }\mathsf x\in \ell^2(\mathbb R^{n_y})^{\mathbb N}\right\}.
\end{align}
\end{subequations}
In short, and whenever no confusion can arise, we refer to $\mathfrak C_\infty$ and $\mathfrak C_\infty^\text{i/o}$ as the \textit{expansion coefficient behavior}, respectively, as the \textit{manifest expansion coefficient behavior}.
Observe that $\mathbb N$ in $(\ell^2(\mathbb R^{n_z}))^{\mathbb N}$, $z \in \{x,v,y\}$, refers to the considered time horizon, while  $\mathbb N$ in $ (\mathsf x^i,\mathsf v^i, \mathsf y^i)$, $i\in \mathbb N$ refers to the expansion order (which in case of finite-dimensional expansions is denoted as $p$). 
In the case of a PCE basis $(\phi^i)_{i\in\N}$ (cf.\ Subsection~\ref{sec:PCE}), placing emphasis in its origin we refer to the expansion coefficient behavior as \textit{PCE coefficient behavior}.
The finite-horizon behaviors $\mathfrak C_T$ and $\mathfrak C_T^\text{i/o}$ are defined similarly as in the deterministic case, i.e. by truncating $(\mathsf x,\mathsf v,\mathsf y) \in \mathfrak C_\infty$ to the bounded time horizon $[0, T]$, cf.\ \eqref{finHorbehav} and~\eqref{finHorbehavIO}.

The expansion coefficient behavior $\mathfrak C_\infty$ inherits  desirable properties from the realization behavior $\mathfrak B_\infty$.
\begin{lemma}[Completeness of $\mathfrak C_\infty$]
    \label{stochbehav_complete}
    The expansion coefficient behavior $\mathfrak C_\infty$ is complete, i.e.\ $\mathsf c|_{[0,T]}\in \mathfrak C_T$ for all $T\in\mathbb N$ implies $\mathsf c\in \mathfrak C_\infty$.
\end{lemma}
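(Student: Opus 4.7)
The plan is to reduce completeness of $\mathfrak{C}_\infty$ to completeness of the realization behavior $\mathfrak{B}_\infty$, which has already been noted for system~\eqref{eq:Realdynamics}. The key observation is that, by the definition~\eqref{eq:CoeffBehav}, membership of a candidate $\mathsf{c}=(\mathsf{x},\mathsf{v},\mathsf{y})$ in $\mathfrak{C}_\infty$ decouples into two separate pointwise requirements: (i) for every expansion index $i\in\mathbb N$, the slice $(\mathsf{x}^i,\mathsf{v}^i,\mathsf{y}^i)$ lies in the deterministic behavior $\mathfrak{B}_\infty$; and (ii) at each time instant $k\in\mathbb N$, the columns $\mathsf{x}_k$, $\mathsf{v}_k$, $\mathsf{y}_k$ are square summable in the expansion index. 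Both requirements can be verified by extracting and reading off the appropriate entry of a suitable witness trajectory, so no limiting or diagonal argument is needed.

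Concretely, I would fix an arbitrary expansion index $i\in\mathbb N$ and an arbitrary horizon $T\in\mathbb N$. By hypothesis, $\mathsf{c}|_{[0,T]}\in\mathfrak{C}_T$, so there exists a witness $\tilde{\mathsf{c}}_T\in\mathfrak{C}_\infty$ whose restriction to $[0,T]$ agrees with $\mathsf{c}|_{[0,T]}$. Extracting the $i$-th slice $(\tilde{\mathsf{x}}_T^i,\tilde{\mathsf{v}}_T^i,\tilde{\mathsf{y}}_T^i)$ of $\tilde{\mathsf{c}}_T$ yields, by the definition of $\mathfrak{C}_\infty$, an element of $\mathfrak{B}_\infty$. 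Its restriction to $[0,T]$ therefore lies in $\mathfrak{B}_T$ and, by construction, coincides with $(\mathsf{x}^i,\mathsf{v}^i,\mathsf{y}^i)|_{[0,T]}$. Since $T$ was arbitrary, deterministic completeness of $\mathfrak{B}_\infty$ delivers $(\mathsf{x}^i,\mathsf{v}^i,\mathsf{y}^i)\in\mathfrak{B}_\infty$, and since $i$ was arbitrary, condition~(i) holds for all $i\in\mathbb N$.

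For condition~(ii), I would note that for any fixed $k\in\mathbb N$ it suffices to pick any witness $\tilde{\mathsf{c}}_T$ with $T\geq k$; then $\mathsf{x}_k=\tilde{\mathsf{x}}_{T,k}\in\ell^2(\mathbb R^{n_x})$, $\mathsf{v}_k=\tilde{\mathsf{v}}_{T,k}\in\ell^2(\mathbb R^{n_v})$, and $\mathsf{y}_k=\tilde{\mathsf{y}}_{T,k}\in\ell^2(\mathbb R^{n_y})$ because $\tilde{\mathsf{c}}_T\in\mathfrak{C}_\infty$. Combining (i) and (ii) gives $\mathsf{c}\in\mathfrak{C}_\infty$, which is the claim.

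I do not anticipate any serious obstacle: the statement is essentially a bookkeeping corollary of deterministic completeness once one notices that the defining conditions of~\eqref{eq:CoeffBehav} are pointwise in the expansion index and in time. The only subtlety worth flagging is that the witnesses $\tilde{\mathsf{c}}_T$ may be chosen differently for each $T$ without needing to be mutually consistent; this is what makes the argument a one-line appeal to the deterministic result rather than a compactness or selection argument.
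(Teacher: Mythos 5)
Your proof is correct and follows essentially the same route as the paper's: fix a horizon-$T$ witness in $\mathfrak C_\infty$, slice along each expansion index $i$, and invoke completeness of the deterministic behavior $\mathfrak B_\infty$. Your extra care with the $\ell^2$ membership of the columns (your condition (ii)) is a detail the paper's proof leaves implicit, but it does not change the argument.
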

\begin{proof}
    For every $T$ there is $\mathsf c^T\in\mathfrak C_\infty$ such that $\mathsf c|_{[0,T]} = \mathsf c^T|_{[0,T]}$. Therefore, $\mathsf c^i|_{[0,T]} = \mathsf c^{T,i}|_{[0,T]}\in\mathfrak B_{[0,T]}$ for all $T$, $i\in\mathbb N$. By completeness of $\mathfrak B_\infty$, we obtain $\mathsf c^i\in\mathfrak B_\infty$ for all $i\in\mathbb N$, that is $\mathsf c\in\mathfrak C_\infty$.
\end{proof}
\noindent The next lemma deals with the controllability of the expansion coefficient behavior. It results as a (straightforward) corollary of Lemma~\ref{ctrlRbehav} formulated and proven for descriptor systems  in Section~\ref{sec:ext}; hence we skip the proof.
\begin{lemma}[Controllability of $\mathfrak C_\infty$]
    \label{stochbehav_ctrl}
    Suppose that the realization behavior $\mathfrak B_\infty$ is controllable (with transition delay $T'=n_x$). Then $\mathfrak C_\infty$ is controllable with transition delay $T'=n_x$.
\end{lemma}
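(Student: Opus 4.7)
The plan is to reduce the claim to coefficient-wise controllability of the realization behavior $\mathfrak{B}_\infty$ and then verify that the constructed transition remains square-summable across the expansion index $i$. Given $\mathsf{c},\tilde{\mathsf{c}}\in\mathfrak{C}_\infty$ and a transition time $T\in\mathbb{Z}_+$, the definition of $\mathfrak{C}_\infty$ immediately yields, for every $i\in\mathbb{N}$, that $\mathsf{c}^i,\tilde{\mathsf{c}}^i\in\mathfrak{B}_\infty$. By the assumed controllability of $\mathfrak{B}_\infty$ with transition delay $T'=n_x$, for each $i$ there exists $\mathsf{c}'^i\in\mathfrak{B}_\infty$ with $\mathsf{c}'^i|_{[0,T-1]}=\mathsf{c}^i|_{[0,T-1]}$ and $\mathsf{c}'^i|_{[T+n_x,\infty)}=\tilde{\mathsf{c}}^i|_{[0,\infty)}$. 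Assembling $\mathsf{c}'=(\mathsf{c}'^i)_{i\in\mathbb{N}}$, the only nontrivial point is to ensure $\mathsf{c}'_k\in\ell^2(\mathbb{R}^{n_z})$ for every $k\in\mathbb{N}$, in particular on the transition interval $[T,T+n_x-1]$.

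To obtain this coefficient-wise construction in a controlled, uniform manner, I would make the LTI steering explicit. Using $\mathcal{C}_{n_x}\doteq [B,AB,\dots,A^{n_x-1}B]$, controllability of \eqref{eq:Realdynamics} guarantees $\operatorname{colsp}(\mathcal{C}_{n_x})=\mathbb{R}^{n_x}$, so that for any pair of boundary values $(\mathsf{x}_T^i,\tilde{\mathsf{x}}_0^i)\in\mathbb{R}^{n_x}\times\mathbb{R}^{n_x}$, the transition inputs
\begin{equation*}
\begin{bmatrix}\mathsf{v}_{T+n_x-1}^i\\ \vdots\\ \mathsf{v}_T^i\end{bmatrix} = \mathcal{C}_{n_x}^\dagger\bigl(\tilde{\mathsf{x}}_0^i - A^{n_x}\mathsf{x}_T^i\bigr)
\end{equation*}
solve the reachability equation $\tilde{\mathsf{x}}_0^i = A^{n_x}\mathsf{x}_T^i + \mathcal{C}_{n_x}\bigl[\mathsf{v}_{T+n_x-1}^{i\top},\dots,\mathsf{v}_T^{i\top}\bigr]^\top$. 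Crucially, the linear operator $(\mathsf{x}_T^i,\tilde{\mathsf{x}}_0^i)\mapsto\mathsf{v}_{T+j}^i$ is fixed and bounded, i.e.\ independent of $i$. The transition states $\mathsf{x}_{T+j}^i$ and outputs $\mathsf{y}_{T+j}^i$ for $j\in\mathbb{I}_{[0,n_x-1]}$ are then obtained by iterating \eqref{eq:Coeffdynamics}, which likewise amounts to a bounded linear map in the boundary data.

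The $\ell^2$-verification is then immediate: by assumption $(\mathsf{x}_T^i)_{i\in\mathbb{N}}\in\ell^2(\mathbb{R}^{n_x})$ and $(\tilde{\mathsf{x}}_0^i)_{i\in\mathbb{N}}\in\ell^2(\mathbb{R}^{n_x})$, and since a bounded linear operator maps $\ell^2$-sequences to $\ell^2$-sequences, each of $(\mathsf{v}_{T+j}^i)_{i\in\mathbb{N}}$, $(\mathsf{x}_{T+j}^i)_{i\in\mathbb{N}}$, and $(\mathsf{y}_{T+j}^i)_{i\in\mathbb{N}}$ is square summable over $i$ for every $j\in\mathbb{I}_{[0,n_x-1]}$. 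Outside the transition window, $\ell^2$-summability is inherited directly from $\mathsf{c}$ on $[0,T-1]$ and from $\tilde{\mathsf{c}}$ on $[T+n_x,\infty)$. Hence $\mathsf{c}'\in\mathfrak{C}_\infty$, and controllability of $\mathfrak{C}_\infty$ with transition delay $n_x$ follows.

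The only real obstacle is the uniformity of the construction in $i$: a purely existential application of the behavioral controllability of $\mathfrak{B}_\infty$ to each $i$ separately would not a priori guarantee the $\ell^2$-condition. The fix, and the heart of the argument, is to use a \emph{single} (e.g.\ least-norm via $\mathcal{C}_{n_x}^\dagger$) linear right-inverse common to all indices, which converts the coefficient-wise controllability into bounded linear maps on the PCE coefficient sequences. This is also the mechanism that will be reused in the descriptor-system generalization in Lemma~\ref{ctrlRbehav}, from which the present statement is recovered as a special case.
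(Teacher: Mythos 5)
Your proof is correct and follows essentially the same route the paper intends: the paper defers to the pseudo-inverse steering construction in Lemma~\ref{ctrlRbehav} (of which the explicit LTI case is the instance $\delta=1$, $n_J=n_x$), and that construction is precisely your fixed, trajectory-independent linear right-inverse of the controllability matrix. You correctly identify and make explicit the one point the paper calls ``straightforward'' and omits, namely that a single bounded linear steering map applied coefficient-wise preserves square-summability over the expansion index $i$, which is exactly what upgrades per-index controllability of $\mathfrak B_\infty$ to controllability of $\mathfrak C_\infty$.
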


\noindent \textbf{The Behavior of $L^2$-Random Variables}. 
A reasonable requirement for a behavior of the stochastic system~\eqref{eq:RVdynamics} is that its trajectories considered path-wise, that is evolving in time for a fixed outcome $\omega\in\Omega$, satisfy the realization dynamics~\eqref{eq:Realdynamics}. To this end, we define the full and the manifest \emph{behavior of the dynamics in $L^2$-random variables} corresponding to  system~\eqref{eq:Realdynamics} as
\begin{subequations} \label{eq:StochBehav}
\begin{align}
    \mathfrak S_\infty &\doteq \left\{(X,V,Y)\,\middle |\, \begin{gathered}X\in (L^2(\Omega,\mathbb R^{n_x}))^\mathbb{N},\\ V\in (L^2(\Omega,\mathbb R^{n_v}))^\mathbb{N},\\ Y\in (L^2(\Omega,\mathbb R^{n_y}))^\mathbb{N} \text{ with} \\ (X(\omega), V(\omega), Y(\omega))\in\mathfrak B_\infty\\\text{ for $\prob$-a.a.\ }\omega\in\Omega\end{gathered}\right\},\\
    \mathfrak S^\text{i/o}_\infty &\doteq \left\{(V,Y)\,\middle |\,(X,V,Y)\in\mathfrak S_\infty\text{ with } X\in (L^2(\Omega,\mathbb R^{n_x}))^{\mathbb N}\right\}.
\end{align}
\end{subequations}
The corresponding finite horizon behaviors $\mathfrak S_T$ and $\mathfrak S_T^\text{i/o}$ are defined similarly to the deterministic case, see \eqref{finHorbehav} and \eqref{finHorbehavIO}. For the sake of brevity, we refer to  $\mathfrak S_\infty$ and $\mathfrak S_\infty^\text{i/o}$ as the \textit{random variable behavior}, respectively, as the \textit{manifest random variable behavior}. Note that in the random variable behavior $\mathfrak S_\infty$ besides existence of second-order moments no further assumptions on the statistical nature of the considered stochastic processes are made. In particular, neither stochastic independence nor adaptation to a certain filtration is required.

\subsection{Behavior Inclusion,  Lift, and Equivalence}
Notice that the realization trajectories in $\mathfrak B_\infty$ can be considered also as stochastic processes which are at each time instant almost surely constant. Hence, the random variable behavior as defined in \eqref{eq:StochBehav} includes the realization behavior,
\begin{equation*}
    \mathfrak B_\infty\subset \mathfrak S_\infty, 
\end{equation*}
which can also be extended to finite-time or manifest behaviors. Likewise, the expansion coefficient behavior~\eqref{eq:CoeffBehav} satisfies
\begin{equation} \label{eq:CoeffBehavContruct}
    \mathfrak C_\infty {\subset}\bigtimes_{i\in\mathbb N} \mathfrak B_\infty.
\end{equation}
The relation \eqref{eq:CoeffBehavContruct} holds with equality if a square summability condition on the elements of $\bigtimes_{i\in\mathbb N} \mathfrak B_\infty$ along the expansion index $i$ is imposed. In the case of finite time horizon and exactness of the series expansion this additional condition is not necessary for the equality. The inclusion \eqref{eq:CoeffBehavContruct} expresses the fact that for all dimensions $i\in \mathbb N$ of the expansion (which correspond to basis directions $\phi^i$) the realization behavior $\mathfrak B_\infty$ describes the dynamics. Put differently, in the context of model-based system representations, for all basis dimensions $i \in \mathbb N$, the expansion coefficients satisfy identical linear system equations~\eqref{eq:Coeffdynamics} which in turn correspond to the realization dynamics~\eqref{eq:Realdynamics}.
Consequently, it is fair to ask for the relation between $ \mathfrak S_\infty$ and $ \mathfrak C_\infty$. \vspace{2mm}

\noindent \textbf{Equivalence and Lift}. %
In the above definition of the random variable behavior the realm of finite-dimensional systems is left behind. Below we derive relationships which sustain our conception of random variable behavior. 
The next theorem shows how elements of the behaviors can be mapped onto each other.

\begin{theorem}[Behavioral lift]\label{thm:lift}~\\
    \begin{enumerate}\vspace*{-6mm}
    \item The linear map $\Phi:\mathfrak C_\infty\rightarrow \mathfrak S_\infty$
    \[ 
    (\mathsf x, \mathsf v,\mathsf y)\mapsto\Phi(\mathsf x, \mathsf v,\mathsf y)\doteq \left(\sum_{i\in\mathbb N} \phi^i\mathsf x^i, \sum_{i\in\mathbb N} \phi^i\mathsf v^i, \sum_{i\in\mathbb N} \phi^i\mathsf y^i\right)
    \] 
    is bijective. Its inverse is given by $\Phi^{-1}(X,V,Y)=(\mathsf x, \mathsf v,\mathsf y)$, where
    \begin{equation}
    \label{eq:coeffdef}
        \mathsf x^i = \frac{\mean[\phi^i X]}{\mean[\phi^i\phi^i]},\quad  \mathsf v^i = \frac{\mean[\phi^i V]}{\mean[\phi^i\phi^i]},\quad  \mathsf y^i = \frac{\mean[\phi^i Y]}{\mean[\phi^i\phi^i]},\quad i\in \mathbb N.
    \end{equation}
    \item For fixed $\omega\in\Omega$, the linear map $\Psi_\omega:\mathfrak S_\infty \rightarrow \mathfrak B_\infty$, 
    \[(X,V,Y)\mapsto \Psi_\omega(X,V,Y)\doteq (X(\omega), V(\omega), Y(\omega))\]
    is surjective. Further, the concatenation $\Psi_\omega\circ \Phi$ satisfies
    \begin{equation*}
        (\Psi_\omega\circ \Phi) (\mathsf x,\mathsf v,\mathsf y) = \left(\sum_{i\in\mathbb N} \phi^i(\omega)\mathsf x^i, \sum_{i\in\mathbb N} \phi^i(\omega)\mathsf v^i, \sum_{i\in\mathbb N} \phi^i(\omega)\mathsf y^i\right)
    \end{equation*}
    and $\Psi_\omega\circ \Phi$ is surjective.
\end{enumerate}
\end{theorem}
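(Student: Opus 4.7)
My plan is to establish part~(1) in four substeps---well-definedness of $\Phi$, verification of the candidate inverse formula (which simultaneously yields injectivity), and then surjectivity of $\Phi$---exploiting orthogonality of $(\phi^i)_{i\in\mathbb N}$ in $L^2(\Omega,\mathbb R)$ together with linearity of the realization dynamics~\eqref{eq:Realdynamics}. Part~(2) will then reduce to a simple constant-process construction for $\Psi_\omega$ combined with the surjectivity of $\Phi$ already obtained.

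For well-definedness of $\Phi$, I would fix $(\mathsf x,\mathsf v,\mathsf y)\in\mathfrak C_\infty$ and show that for every $k\in\mathbb N$ the series $\sum_{i\in\mathbb N}\phi^i\mathsf x^i_k$ converges in $L^2(\Omega,\mathbb R^{n_x})$ by a Bessel/Parseval argument that uses the square-summability encoded in $\mathsf x_k\in\ell^2(\mathbb R^{n_x})$, and analogously for $\mathsf v$ and $\mathsf y$. Membership of $\Phi(\mathsf x,\mathsf v,\mathsf y)$ in $\mathfrak S_\infty$ then follows from linearity: each finite partial sum $\sum_{i=0}^{N-1}\phi^i(\omega)(\mathsf x^i_{k+1},\mathsf x^i_k,\mathsf v^i_k,\mathsf y^i_k)$ satisfies~\eqref{eq:Realdynamics} pointwise in~$\omega$ because each $(\mathsf x^i,\mathsf v^i,\mathsf y^i)\in\mathfrak B_\infty$; passing to the $L^2$-limit preserves the affine identities, so the dynamics hold $\prob$-a.s. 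To identify the inverse, I would apply $\mean[\phi^j\,\cdot\,]$ to $X=\sum_i\phi^i\mathsf x^i$, interchange sum and expectation by $L^2$-continuity of the inner product with~$\phi^j$, and use orthogonality to extract $\mathsf x^j=\mean[\phi^j X]/\mean[\phi^j\phi^j]$; this simultaneously proves injectivity and yields formula~\eqref{eq:coeffdef}. Surjectivity of $\Phi$ onto $\mathfrak S_\infty$ is the Fourier-series direction: for $(X,V,Y)\in\mathfrak S_\infty$, I define $(\mathsf x,\mathsf v,\mathsf y)$ by~\eqref{eq:coeffdef}; Bessel provides the required $\ell^2$-bookkeeping for each $\mathsf z_k$ with $z\in\{x,v,y\}$, while applying $\mean[\phi^i\,\cdot\,]/\mean[\phi^i\phi^i]$ to the $\prob$-a.s.\ identities~\eqref{eq:Realdynamics} satisfied by $(X,V,Y)$ and exploiting linearity of $A,\widetilde B,C,\widetilde D$ shows that each $(\mathsf x^i,\mathsf v^i,\mathsf y^i)$ itself satisfies~\eqref{eq:Realdynamics} and hence lies in $\mathfrak B_\infty$, so that $(\mathsf x,\mathsf v,\mathsf y)\in\mathfrak C_\infty$.

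For part~(2), surjectivity of $\Psi_\omega$ follows from a trivial construction: given $(x,v,y)\in\mathfrak B_\infty$, the almost-surely constant stochastic processes $X_k\equiv x_k$, $V_k\equiv v_k$, $Y_k\equiv y_k$ lie in $\mathfrak S_\infty$, and $\Psi_\omega$ returns $(x,v,y)$. The explicit formula for $\Psi_\omega\circ\Phi$ is obtained by substituting the definition of $\Phi$ and evaluating at~$\omega$, with the understanding that the pointwise series agrees with the $L^2$-sum for $\prob$-a.a.\ $\omega$. Surjectivity of the composition is then immediate as a composition of two surjections.

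The main technical obstacle I expect is the careful interplay between the $L^2$-convergence defining $\Phi$ and the pointwise, $\omega$-wise evaluation used in the formula for $\Psi_\omega\circ\Phi$: in general $L^2$-convergence only guarantees pointwise convergence $\prob$-a.s.\ along subsequences, so the displayed series identity must be read modulo $\prob$-nullsets (or under a specific choice of representative in each $L^2$-equivalence class of the basis $(\phi^i)_{i\in\mathbb N}$). All remaining calculations---orthogonal projection, Bessel/Parseval, and commuting the linear maps $A,\widetilde B,C,\widetilde D$ with the bounded functionals $\mean[\phi^i\,\cdot\,]$---are routine once this bookkeeping is settled.
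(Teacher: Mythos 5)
Your proposal is correct and follows essentially the same route as the paper's proof: both rest on the Riesz--Fischer isometry between $\ell^2$ and $L^2$ and transfer the linear dynamics between coefficient and random-variable form via orthogonality, the only difference being that the paper packages this transfer as a single Parseval identity for the dynamics residual (used in both directions) whereas you use partial sums plus the $L^2$-limit one way and the coefficient functionals $\mean[\phi^i\,\cdot\,]/\mean[\phi^i\phi^i]$ the other way. Your closing remark about reading the pointwise series in $\Psi_\omega\circ\Phi$ modulo $\prob$-nullsets is a legitimate subtlety that the paper's proof leaves implicit.
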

\begin{proof}
    Assertion~(i) is a consequence of the isometric isomorphism between $\ell^2(\mathbb R)$ and $L^2((\Omega,\mathcal F,\prob),\mathbb R)$ established by series expansion in terms of the orthogonal basis $(\phi^i)_{i\in\mathbb N}$, cf.\ the Fischer--Riesz theorem. We show that the image of $\Phi$ under $\mathfrak C_\infty$ is contained in $\mathfrak S_\infty$. Let $(\mathsf x, \mathsf v,\mathsf v)\in\mathfrak C_\infty$ and set $(X,V,Y)=\Phi(\mathsf x, \mathsf v,\mathsf v)$. By definition $(\mathsf x^i, \mathsf v^i,\mathsf y^i)\in\mathfrak B_\infty$ for every $i\in\mathbb N$. Therefore, \eqref{eq:Coeffdynamics} holds for all $i,k\in\mathbb N$. Together with the orthogonality of $(\phi^i)_{i\in\mathbb N}$ we obtain for all $k\in\mathbb N$
    \begin{subequations}
    \label{eq:zeroVariance}
    \begin{align}
        0&= \sum_{i\in\mathbb N} \mean[\phi^i\phi^i](\mathsf x^i_{k+1} - A\mathsf x^i_k - \widetilde B \mathsf v^i_k)^\top (\mathsf x^i_{k+1} - A\mathsf x^i_k - \widetilde B \mathsf v^i_k) \nonumber\\
        &=\mean\bigl[(X_{k+1} - AX_k - \widetilde B V_k)^\top (X_{k+1} - AX_k - \widetilde B V_k)\bigr],\label{eq:zeroVariancea}\\
        0&= \sum_{i\in\mathbb N} \mean[\phi^i\phi^i](\mathsf y^i_{k} - C\mathsf x^i_k - \widetilde D \mathsf v^i_k)^\top (\mathsf y^i_{k} - C\mathsf x^i_k - \widetilde D \mathsf v^i_k), \nonumber\\
        &=\mean\bigl[(Y_{k} - CX_k - \widetilde D V_k)^\top (Y_{k} - CX_k - \widetilde D V_k)\bigr]\label{eq:zeroVarianceb}
    \end{align}
    \end{subequations}
    Hence, \eqref{eq:RVdynamics} holds for all $k\in\mathbb N$ and $(X,V,Y)\in\mathfrak S_\infty$. Next, we show that $\Phi$ is surjective. Let $(X,V,Y)\in\mathfrak S_\infty$ and take the series expansion with respect to $(\phi^i)_{i\in\mathbb N}$, that is $(\mathsf x,\mathsf v,\mathsf y)$ is given by \eqref{eq:coeffdef}. As $(X,V,Y)\in\mathfrak S_\infty$, \eqref{eq:RVdynamics} holds for all $k\in\mathbb N$. Therefore, \eqref{eq:zeroVariance} holds, which implies \eqref{eq:Coeffdynamics} for all $i,k\in\mathbb N$. Hence, $(\mathsf x,\mathsf v,\mathsf y)\in\mathfrak C_\infty$ and $\Phi(X,V,Y)=(\mathsf x,\mathsf v,\mathsf y)$. The injectivity of $\Phi$ follows from the uniqueness of the series expansion. This shows (i).
    
    The surjectivity of $\Psi_\omega\circ \Phi$ in (ii) follows from $\mathfrak B_\infty\subset \mathfrak S_\infty$. 
\end{proof}
\noindent The relations between the behaviors and the maps derived in Theorem~\ref{thm:lift} are sketched in Figure~\ref{fig:BehavOverview}. Observe the structural similarity of the relations between the behaviors in Figure~\ref{fig:BehavOverview} and the relation between the models in Figure~\ref{fig:modelOverview}.
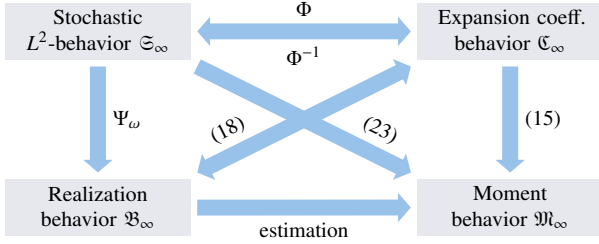
\begin{figure}[t]
    \centering
        \begin{tikzpicture}[my triangle/.style={-{Triangle[width=\the\dimexpr1.8\pgflinewidth,length=\the\dimexpr0.8\pgflinewidth]}}, my double triangle/.style={{Triangle[width=\the\dimexpr1.8\pgflinewidth,length=\the\dimexpr0.8\pgflinewidth]}-{Triangle[width=\the\dimexpr1.8\pgflinewidth,length=\the\dimexpr0.8\pgflinewidth]}}]
    \footnotesize
    \node[rectangle, draw=none, fill=gray1, minimum height=2.5em, text centered, text width=8em, outer sep=2.5pt] (A) {Stochastic $L^2$-behavior $\mathfrak S_\infty$};
    \node[right=80pt of A, rectangle, draw=none, fill=gray1, minimum height=2.5em, text centered, text width=8em, outer sep=2.5pt] (B) {Expansion coeff.\ behavior $\mathfrak C_\infty$};
    \node[below= 40pt of A,rectangle, draw=none, fill=gray1, minimum height=2.5em, text centered, text width=8em, outer sep=2.5pt] (C) {Realization behavior $\mathfrak B_\infty$};
    \node[below=40pt of B,rectangle, draw=none, fill=gray1, minimum height=2.5em, text centered, text width=8em, outer sep=2.5pt] (D) {Moment \\ behavior $\mathfrak M_\infty$};
    
    \draw[line width=6pt,my double triangle, draw=lightblue](A) -- (B) node [above, midway] (n1) {$\Phi$};
    \draw[line width=6pt,my double triangle, draw=none](A) -- (B) node [below, midway] (n2) {$\Phi^{-1}$};
    \draw[line width=6pt,my triangle, draw=lightblue](A) -- (C) node [midway,right] (n3) {$\Psi_\omega$};
    \draw[line width=6pt,my triangle, draw=lightblue](C) -- (D) node [midway, below] (n4) {estimation};
    \draw[line width=6pt,my triangle, draw=lightblue](B) -- (D) node [midway, right] (n5) {\eqref{eq:moments_PCE}};
    \draw[line width=6pt,my triangle, draw=lightblue](A.south east) -- (D.north west) node [midway, sloped, above right=0pt and 15pt] (n6) {\eqref{top}};
    \draw[line width=6pt,my double triangle, draw=lightblue](B.south west) -- (C.north east) node [midway, sloped, above left=0pt and 15pt] (n6) {\eqref{eq:CoeffBehavContruct}};
\end{tikzpicture}
    \caption{Relations and maps between the behaviors $\mathfrak B_\infty, \mathfrak C_\infty$, $\mathfrak S_\infty$, and $\mathfrak M_\infty$.}
    \label{fig:BehavOverview}
\end{figure}
\begin{remark}[Behavioral lift for PCE]\label{rem:liftPCE}
    Given an orthogonal system $(\phi^i)_{i\in\mathbb N}$ derived from a family $\mathfrak F$ of random variables in the context of PCE (cf.\ Subsection~\ref{sec:PCE}) the behavioral lift is subject to subtle limitations. Due to the fact that $\sigma(\mathfrak F)$ is in general only a sub-$\sigma$-algebra of $\mathcal F$, the map $\Phi$ defined in Theorem~\ref{thm:lift}~(i) is still injective, but not surjective. However, in this case
    \[
        \Phi(\mathfrak C_\infty) = \mathfrak S_\infty \cap L^2((\Omega,\sigma(\mathfrak F),\prob), \mathbb R^{n_x+n_v+n_y})^{\mathbb N}.
    \]
    On the other hand, the surjectivity of the map $\Psi_\omega$ given in Theorem~\ref{thm:lift}~(ii) remains unchanged.
    In conclusion, it stands to reason that the potential loss of equivalence of descriptions,  stemming from lifting the PCE coefficient behavior $\mathfrak C_\infty$ by $\Phi$, does not imply significant loss of information in actual applications. 
\end{remark}

\noindent Similar to Theorem~\ref{thm:lift} one finds comparable maps between the manifest behaviors as well as behaviors with respect to finite-time horizons.

As an immediate consequence of the behavioral lift we find the following relationship between the realization behavior and expansion coefficient behavior.

\begin{corollary}[$\mathfrak B_\infty \to \mathfrak C_\infty$]
\label{cor:embbedingCoeff}
    One has $\Phi^{-1}(\mathfrak B_\infty)\subset \mathfrak C_\infty$. For any $(x,v,y)\in\mathfrak B_\infty$ the coefficients of $\Phi^{-1}(x,v,y)=(\mathsf x, \mathsf v,\mathsf y)$ are given by
    \[\mathsf x^i = \frac{\mean[\phi^i]}{\mean[\phi^i\phi^i]}x,\quad  \mathsf v^i = \frac{\mean[\phi^i ]}{\mean[\phi^i\phi^i]}v,\quad  \mathsf y^i = \frac{\mean[\phi^i]}{\mean[\phi^i\phi^i]}y,\quad i\in \mathbb N.\]
    In particular, if $\phi^0\equiv 1$, that is $\mean [\phi^i]=0$ for $i\geq 1$, we find $\mathsf x^0=x$, $\mathsf v^0=v$, $\mathsf y^0=y$ and $\mathsf x^i=0$, $\mathsf v^i=0$, $\mathsf y^i=0$ for all $i\geq 1$.
\end{corollary}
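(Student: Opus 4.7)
The plan is to derive the corollary as a direct consequence of Theorem~\ref{thm:lift} combined with the embedding $\mathfrak B_\infty\subset \mathfrak S_\infty$ already noted in the text. First, I would identify any realization trajectory $(x,v,y)\in\mathfrak B_\infty$ with the stochastic process that, at every time instant $k\in\mathbb N$, takes the constant (deterministic) value $(x_k,v_k,y_k)$ for $\prob$-almost all $\omega\in\Omega$. Since the dynamics \eqref{eq:Realdynamics} are satisfied path-wise trivially, this embedded triplet lies in $\mathfrak S_\infty$, and thus Theorem~\ref{thm:lift}(i) guarantees that $\Phi^{-1}(x,v,y)$ is a well-defined element of $\mathfrak C_\infty$, establishing $\Phi^{-1}(\mathfrak B_\infty)\subset \mathfrak C_\infty$.

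Second, I would compute the coefficients explicitly via formula \eqref{eq:coeffdef} of Theorem~\ref{thm:lift}. Because $X_k$, $V_k$, $Y_k$ are $\prob$-a.s.\ constant equal to $x_k$, $v_k$, $y_k$ respectively, the expectations factor out:
\begin{equation*}
\mean[\phi^i X_k] = x_k\,\mean[\phi^i], \quad \mean[\phi^i V_k] = v_k\,\mean[\phi^i],\quad \mean[\phi^i Y_k] = y_k\,\mean[\phi^i].
\end{equation*}
Dividing by $\mean[\phi^i\phi^i]$ (which is non-zero for a valid orthogonal basis element) yields the stated formulas for $\mathsf{x}^i$, $\mathsf{v}^i$, $\mathsf{y}^i$.

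Third, I would specialize to the case $\phi^0\equiv 1$. For $i=0$ we immediately get $\mean[\phi^0]/\mean[\phi^0\phi^0]=1$, so $\mathsf{x}^0=x$, $\mathsf{v}^0=v$, $\mathsf{y}^0=y$. For $i\geq 1$, orthogonality gives $\mean[\phi^i]=\mean[\phi^i\phi^0]=0$, hence all higher coefficients vanish identically, confirming the final assertion.

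Frankly, there is no substantial obstacle in this argument, as essentially all the heavy lifting has been done in Theorem~\ref{thm:lift}. The only conceptual subtlety worth emphasizing is the embedding step, namely that a deterministic trajectory must be reinterpreted as an $\mathcal F$-measurable (in fact, constant) stochastic process before the machinery of $\Phi^{-1}$ applies. Once this identification is made, the remaining steps are routine computations using linearity of expectation and orthogonality of the basis $(\phi^i)_{i\in\mathbb N}$.
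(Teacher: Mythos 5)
Your proof is correct and follows exactly the route the paper intends: the paper presents this corollary as an immediate consequence of the inclusion $\mathfrak B_\infty\subset\mathfrak S_\infty$ (realization trajectories viewed as almost-surely constant stochastic processes) together with the coefficient formula~\eqref{eq:coeffdef} from Theorem~\ref{thm:lift}, which is precisely your argument. The explicit computation $\mean[\phi^i X_k]=x_k\,\mean[\phi^i]$ and the specialization to $\phi^0\equiv 1$ via orthogonality are exactly what the paper leaves implicit.
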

\noindent By Corollary~\ref{cor:embbedingCoeff} the sequence $\mathsf c\in\ell^2(\mathbb R)$ given by 
\[\mathsf c^i = \dfrac{\mean[\phi^i]}{\mean[\phi^i\phi^i]}
\] allows, in accordance with the identification~\eqref{eq:CoeffBehavContruct}, the embedding of $\mathfrak B_\infty$ into $\mathfrak C_\infty$, that is for each $b\in\mathfrak B_\infty$ one has $\mathsf c b = (\mathsf c^0 b, \mathsf c^1 b,\dots)\in\mathfrak C_\infty$. The next theorem illustrates the equivalence of behaviors. 
\begin{theorem}[Behavioral equivalence]
    \label{thm:equivalence} 
    Let $X\in (L^2(\Omega,\mathbb R^{n_x}))^{\mathbb N}$, $V\in (L^2(\Omega,\mathbb R^{n_v}))^{\mathbb N}$ and $Y\in (L^2(\Omega,\mathbb R^{n_y}))^{\mathbb N}$ with its corresponding expansion coefficients $\mathsf x\in (\ell^2(\mathbb R^{n_x}))^{\mathbb N}$, $\mathsf v\in (\ell^2(\mathbb R^{n_v}))^{\mathbb N}$ and $\mathsf y\in (\ell^2(\mathbb R^{n_y}))^{\mathbb N}$, and for $\omega\in\Omega$ its realizations $ (X(\omega),V(\omega),Y(\omega)) \in \mathbb (\R^{n_x}\times \mathbb \R^{n_v}\times \mathbb \R^{n_y})^\mathbb{N}$.
    
    Then, for $T\in\mathbb N\cup\{\infty\}$, the following statements are equivalent:
    \begin{enumerate}[label=(\roman*)]
        \item $(X(\omega),V(\omega),Y(\omega))\in\mathfrak B_T$ for $\prob$-a.a.\ $\omega\in\Omega$;
        \item $(X,V,Y)\in\mathfrak S_T$;
        \item $(\mathsf x,\mathsf v, \mathsf y)\in\mathfrak C_T$;
        \item $(\mathsf x^i, \mathsf v^i, \mathsf y^i)\in \mathfrak B_T$ for all $i\in\mathbb N$.
    \end{enumerate}
    A similar proposition holds true for the manifest behaviors.
\end{theorem}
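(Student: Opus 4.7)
The plan is to observe that three of the four equivalences are essentially bookkeeping with the definitions of the various behaviors, while the substantive step (ii)$\Leftrightarrow$(iii) is a finite-horizon analogue of the behavioral lift already established in Theorem~\ref{thm:lift}. I would handle the statement first for $T=\infty$ and then note that the finite-$T$ case reduces to the infinite-horizon case because any trajectory satisfying the recursion on $[0,T]$ can be extended to $\mathbb N$ by running the dynamics~\eqref{sys}, respectively~\eqref{eq:RVdynamics} and~\eqref{eq:Coeffdynamics}, forward with an arbitrary input continuation.

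First I would derive (i)$\Leftrightarrow$(ii) directly from the definition of $\mathfrak S_\infty$ in~\eqref{eq:StochBehav}: a process $(X,V,Y)$ lies in $\mathfrak S_\infty$ exactly when, for $\prob$-a.a.\ $\omega$, the path $(X(\omega),V(\omega),Y(\omega))$ belongs to $\mathfrak B_\infty$. For finite $T$ this statement is preserved by restriction to $[0,T]$, and the extendibility remark above handles the converse implication. In the same spirit, (iii)$\Leftrightarrow$(iv) is immediate from the definition of $\mathfrak C_\infty$ in~\eqref{eq:CoeffBehav}, which was written precisely so that $\mathsf c \in \mathfrak C_\infty$ iff every coefficient sequence $(\mathsf x^i,\mathsf v^i,\mathsf y^i)$ lies in $\mathfrak B_\infty$; again the finite-horizon version follows by restriction together with extendibility.

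The core step is (ii)$\Leftrightarrow$(iii), which I would obtain by replaying the variance computation~\eqref{eq:zeroVariance} from the proof of Theorem~\ref{thm:lift}. Concretely, for each fixed $k$ the orthogonality of $(\phi^i)_{i\in\mathbb N}$ yields
\[
\mean\!\bigl[\|X_{k+1}-AX_k-\widetilde B V_k\|^2\bigr] = \sum_{i\in\mathbb N} \mean[\phi^i\phi^i]\,\|\mathsf x^i_{k+1}-A\mathsf x^i_k-\widetilde B \mathsf v^i_k\|^2,
\]
and the analogous identity for the output equation. Since all summands are non-negative, the left-hand side vanishes if and only if every summand on the right vanishes; since $\mean[\phi^i\phi^i]>0$, this is equivalent to~\eqref{eq:Coeffdynamics} holding for all $i$. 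Quantifying over $k\in\mathbb I_{[0,T-1]}$ converts the state/output recursion in $L^2$ into the coefficient recursion, and this equivalence is exactly (ii)$\Leftrightarrow$(iii). Chaining the three equivalences closes the cycle.

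I do not expect a serious obstacle; the only care points are (a) making the finite-horizon reduction explicit so that one does not silently use infinite-horizon quantifiers, and (b) noting, as in Remark~\ref{rem:liftPCE}, that when $(\phi^i)_{i\in\mathbb N}$ arises from a PCE and $\sigma(\mathfrak F)\subsetneq\mathcal F$, the equivalence (ii)$\Leftrightarrow$(iii) is to be read inside $L^2((\Omega,\sigma(\mathfrak F),\prob),\cdot)$, so that no $\prob$-a.s.\ component transverse to $\sigma(\mathfrak F)$ is lost. The extension to the manifest behaviors is then a routine application of the same arguments after projecting out the state component, since both lift maps $\Phi$ and $\Psi_\omega$ act componentwise on $(x,v,y)$.
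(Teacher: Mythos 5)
Your proposal is correct and follows essentially the same route as the paper: (i)$\Leftrightarrow$(ii) and (iii)$\Leftrightarrow$(iv) by definition, (ii)$\Leftrightarrow$(iii) via the behavioral lift of Theorem~\ref{thm:lift}~(i) (whose proof is exactly the variance identity~\eqref{eq:zeroVariance} you replay), and the finite-horizon case by restriction. Your explicit remarks on extendibility for the finite-$T$ reduction and on the $\sigma(\mathfrak F)\subsetneq\mathcal F$ caveat are slightly more careful than the paper's two-line argument but do not change the approach.
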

\begin{proof}
    We show the equivalence for $T=\infty$. The equivalence between (i) and (ii) as well as between (iii) and (iv) follows by definition. The equivalence of (ii) and (iii) follows with the behavioral lift, Theorem~\ref{thm:lift}~(i).
    
    By restricting the elements of $\mathfrak B_\infty$, $\mathfrak S_\infty$, and $\mathfrak C_\infty$ to a finite time horizon $T$ one obtains the equivalences for $\mathfrak B_T$, $\mathfrak S_T$, and $\mathfrak C_T$.
\end{proof}

The next lemma featuring controllability and completeness of the random variable behavior $\mathfrak S_\infty$ is a direct consequence of Lem\-ma~\ref{stochbehav_complete} in combination with Theorems~\ref{thm:lift} and~\ref{thm:equivalence}. 

\begin{lemma}[Completeness and controllability of $\mathfrak S_\infty$]~\\
    \label{lem:compl_ctrl_stochBehav}
    The random variable behavior $\mathfrak S_\infty$ is complete, i.e.\ $ S|_{[0,T]}\in \mathfrak S_T$ for all $T\in\mathbb N$ implies $ S \in\mathfrak S_\infty$. If the realization behavior $\mathfrak B_\infty$ is controllable  with delay $T'=n_x$, then $\mathfrak S_\infty$ is controllable with delay $T'=n_x$.
\end{lemma}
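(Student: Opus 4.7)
The plan is to transport both claims from the expansion coefficient behavior $\mathfrak C_\infty$ to the random variable behavior $\mathfrak S_\infty$ via the bijective linear lift $\Phi:\mathfrak C_\infty\to\mathfrak S_\infty$ established in Theorem~\ref{thm:lift}(i). The key observation underpinning this reduction is that $\Phi$ and $\Phi^{-1}$ act pointwise in time---they only couple the expansion indices~$i$, not the time indices~$k$---so restriction to $[0,T]$ and time-shifting commute with both maps. Consequently, $\Phi$ restricts to a bijection $\mathfrak C_T\leftrightarrow\mathfrak S_T$ for every $T\in\mathbb N$, which is precisely the content of Theorem~\ref{thm:equivalence} applied at finite horizons.

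For completeness, I would start with a trajectory $S=(X,V,Y)$ satisfying $S|_{[0,T]}\in\mathfrak S_T$ for every $T\in\mathbb N$. Define its expansion coefficient sequence $\mathsf s=\Phi^{-1}(S)$ via the formulas of Theorem~\ref{thm:lift}(i). By Theorem~\ref{thm:equivalence} applied at each horizon, $\mathsf s|_{[0,T]}\in\mathfrak C_T$ for every $T\in\mathbb N$. Lemma~\ref{stochbehav_complete} then yields $\mathsf s\in\mathfrak C_\infty$, and applying $\Phi$ recovers $S=\Phi(\mathsf s)\in\mathfrak S_\infty$.

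For controllability, fix $S,\tilde S\in\mathfrak S_\infty$ and a switching time $T\in\mathbb Z_+$. Set $\mathsf s=\Phi^{-1}(S)$ and $\tilde{\mathsf s}=\Phi^{-1}(\tilde S)$; both lie in $\mathfrak C_\infty$ by Theorem~\ref{thm:lift}(i). Invoke Lemma~\ref{stochbehav_ctrl} to obtain $\mathsf s'\in\mathfrak C_\infty$ satisfying $\mathsf s'|_{[0,T-1]}=\mathsf s|_{[0,T-1]}$ and $\mathsf s'|_{[T+n_x,\infty)}$ matching $\tilde{\mathsf s}|_{[0,\infty)}$ under the canonical shift, with common transition delay $T'=n_x$. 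Setting $S'=\Phi(\mathsf s')\in\mathfrak S_\infty$ and invoking the time-pointwise nature of $\Phi$, one obtains $S'|_{[0,T-1]}=S|_{[0,T-1]}$ and $S'|_{[T+n_x,\infty)}=\tilde S|_{[0,\infty)}$, as required.

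The only mild obstacle is verifying that $\Phi$ genuinely commutes with restriction-in-time; this is immediate from the defining formula $\Phi(\mathsf x,\mathsf v,\mathsf y)_k=\bigl(\sum_i\phi^i\mathsf x^i_k,\sum_i\phi^i\mathsf v^i_k,\sum_i\phi^i\mathsf y^i_k\bigr)$, which involves no coupling across the time index. A secondary point worth checking is that the transition delay supplied by Lemma~\ref{stochbehav_ctrl} is uniform in the expansion index $i$---so that applying $\Phi$ yields a single delay for the whole lifted trajectory---which indeed holds, since the deterministic bound $T'=n_x$ inherited from controllability of $\mathfrak B_\infty$ does not depend on the particular pair of realization trajectories.
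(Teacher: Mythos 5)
Your proposal is correct and follows essentially the same route as the paper: the paper likewise passes to the expansion coefficients via Theorem~\ref{thm:equivalence} (equivalently the lift $\Phi$ of Theorem~\ref{thm:lift}), invokes Lemma~\ref{stochbehav_complete} for completeness and Lemma~\ref{stochbehav_ctrl} for controllability, and transports the result back. Your write-up is merely more explicit than the paper's (which dispatches controllability with ``follows in a similar way''), in particular in spelling out that $\Phi$ acts pointwise in time and that the transition delay is uniform---both valid and worthwhile clarifications.
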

\begin{proof}
    Let $S=(X,V,Y)$ and suppose that $S|_{[0,T]}\in \mathfrak S_T$ for all $T\in\mathbb N$. Taking the corresponding coefficients of the series expansion $\mathsf c=(\mathsf x,\mathsf v,\mathsf y)$ one has $\mathsf c|_{[0,T]}\in\mathfrak C_T$ by Theorem~\ref{thm:equivalence}. The completeness of $\mathfrak C_\infty$, see Lemma~\ref{stochbehav_complete}, yields $\mathsf c\in \mathfrak C_\infty$ and with Theorem~\ref{thm:equivalence} we see $S\in\mathfrak S_\infty$. The controllability of $\mathfrak S_\infty$ follows in a similar way employing Lemma~\ref{stochbehav_ctrl}.
\end{proof}
\noindent Completeness and controllability of the random variable behavior are both important features in the context of control design and in view of optimal control. Controllability guarantees that given an initial condition there exists an intermediate trajectory steering the system for instance into some equilibrium. Provided a sufficiently long time horizon this implies feasibility of the optimal control problem (provide no other constraints are considered). Completeness on the other hand ensures that successively solving the optimal control problem, while stepping  forward in time, leads to a valid trajectory in the infinite time horizon. In essence, completeness of the behavior gives completeness of the dynamic system as such \citep{Sontag98a}.\vspace{2mm}

\noindent \textbf{Non-Equivalence of Moment Behaviors}. 
The propagation of moments with respect to the system dynamics, i.e.\ \eqref{eq:Momentdynamics}, leads to the following definition of the \emph{behavior associated to the dynamics of the second-order moments} (in short \emph{moment behavior}),

\begin{equation}
    \label{eq:MomentBehavior}
    \mathfrak M_\infty \doteq \mathfrak B_\infty \times \left\{~ c~\middle|\,\begin{gathered}  c=(c^{xx}, c^{xv}, c^{vv}, c^{yy}) \text{ with}\\ 
    c^{pq}\in (\mathbb R^{n_p\times n_q})^{\mathbb N}\text{ for all } p,q\in\{x,v,y\}\\
    \text{ and } c\text{ satisfies for all }k\in\mathbb N\\
    c_{k+1}^{xx}  =
    \begin{bmatrix}
    A  \\ \widetilde B
    \end{bmatrix}^\top
       \begin{bmatrix}
    c^{xx}_k & c^{xv}_k \\
    (c^{xv}_k)^\top & c^{vv}_k  
    \end{bmatrix}
       \begin{bmatrix}
    A  \\ \widetilde B 
    \end{bmatrix}
    \\
    c^{yy}_k =\begin{bmatrix}
    C  \\ \widetilde D  
    \end{bmatrix}^\top
       \begin{bmatrix}
    c^{xx}_k & c^{xv}_k \\
    (c^{xv}_k)^\top & c^{vv}_k  
    \end{bmatrix}
       \begin{bmatrix}
    C  \\ \widetilde D  
    \end{bmatrix}
\end{gathered}\right\}.
\end{equation}
Despite nonlinearity in the definition of higher-order moments, the (second-order) moment behavior $\mathfrak M_\infty$ as such is a linear vector space. However, comparison of Figure~\ref{fig:modelOverview} to Figure~\ref{fig:BehavOverview} raises the question of how the moment behavior fits into the latter? 

Let $(X,V,Y)\in\mathfrak S_\infty$ and set $c^x=\covar[X,X]$, $c^{v}=\covar[V,V]$, $c^{xv}=\covar[X,V]$, etc. From \eqref{eq:Momentdynamics} it is not difficult to see that 
\begin{equation}
\label{down}
    m = (\mean [X] ,\mean [U],\mean [Y], c^{xx}, c^{xv}, c^{vv}, c^{yy})\in\mathfrak M_\infty.
\end{equation}
Consider the nonlinear map 
\begin{equation}
    \label{top}
 \Xi: \mathfrak S_\infty \to \mathfrak M_\infty,\quad S \mapsto  m
\end{equation}
which assigns to a stochastic process $ S \doteq (X,V,Y) \in \mathfrak S_\infty$ its sequence of moments $m$ in \eqref{down}. 
However, even in the case of Gaussian processes which are fully determined by first and second-order moments, the map $\Xi$ is not injective. 

\begin{example}[Non-equivalence of $\mathfrak M_\infty$ and $\mathfrak S_\infty$]
    Let $\mathfrak B_\infty$, $\mathfrak S_\infty$, and $\mathfrak M_\infty$ be the realization, stochastic, and moment behavior, respectively, corresponding to system~\eqref{eq:Realdynamics} with 
    \[
    \sqrt{2} A=\sqrt{2} \widetilde B=C= I_1 \quad \text{and}\quad  \widetilde D=0_{1\times 1}.
    \]
    Consider independently standard normally distributed random variables $X_0,\dots$, $V_0,\dots$ and set $Y_k=X_k$. Define $c^{xx}$, $c^{xv}$, etc.\ as before. Then $c^{xx}=c^{vv}=c^{yy}=(1,1,\dots)$, $\mean[X]=\mean[V]=\mean[Y]=c^{xv}=(0,0,\dots)$ and \eqref{down} holds. On the other hand, for each $k\in\mathbb N$ the random variable 
    \[\Delta_k=X_{k+1}-1/\sqrt{2}(X_k+V_k)\]
    is normally distributed with zero mean. Its variance reads 
    \[\var[\Delta_k] =\var[X_{k+1}]+ \var[1/\sqrt{2}\, X_k] + \var[1/\sqrt{2}\, V_k] = 2.\]
    With $\prob[X_{k+1} = 1/\sqrt{2} (X_k + V_k)]=\prob[ \Delta_k=0]=0$ for $k\in\mathbb N$ we see that $(X(\omega),V(\omega),Y(\omega))\notin\mathfrak B_\infty$ for $\prob$-a.a.\ $\omega\in\Omega$ and, therefore, $(X,V,Y)\notin\mathfrak S_\infty$.
\end{example}
\noindent The above example illustrates that even if the series of moments belongs to the moment behavior, it might happen that with probability one all realizations of the corresponding stochastic process are incompatible with the system dynamics.
This intrinsic difficulty of working with moments and their behaviors can also be seen in Figure~\ref{fig:BehavOverview}: specifically,  a structured means to map an element of the moment behavior $\mathfrak M_\infty$ to the random variable behavior $ \mathfrak S_\infty$ or to the expansion coefficient behavior $ \mathfrak C_\infty$ is not available.
\section{The Stochastic Fundamental Lemma}
\label{sec:StochFundLem}

The behavioral lifts and behavioral equivalence as established in Theorem~\ref{thm:lift} and~\ref{thm:equivalence}, respectively, enable to formulate a version of the fundamental lemma applicable to stochastic systems. As in Section~\ref{sec:StochBehav} we consider LTI systems.

\subsection{Equivalence and Inclusion of Column Spaces}

We begin with a result for LTI systems that has been given by~\cite{Pan21s}.
\begin{lemma}[Column-space equivalence]\label{lem:colEqui} 
    Let there exist a controllable realization behavior~$\mathfrak B_\infty$ based on a realization model given by an  LTI system with state dimension~$n_x$. For $T \in \mathbb{Z}_+$, let $(V,Y) \in \mathfrak S_{T-1}^\text{i/o}$ with corresponding expansion coefficients $(\mathsf v, \mathsf y)\in\mathfrak C_{T-1}^\text{i/o}$ and $(\hat v,\hat y)\in\mathfrak B_{T-1}^\text{i/o}$. Further, assume that $\hat v$ and the coefficients~$\mathsf v^i$, $i \in \mathbb N$, are persistently exciting of order $L+n_x$.
\begin{itemize}
\item[(i)] Then, for all $i \in \mathbb N$
\begin{subequations}
\begin{equation} \label{eq:colEqui}
\text{$\operatorname{colsp}
    \begin{bmatrix}
        \mathcal H_{L}(\mathsf v^i_{[0,T-1]})\\
        \mathcal H_L(\mathsf y^i_{[0,T-1]})
    \end{bmatrix}
    = \operatorname{colsp}
    \begin{bmatrix}
        \mathcal H_{L}(\hat{\mathbf{v}}_{[0,T-1]})\\
        \mathcal H_L(\hat{\mathbf{y}}_{[0,T-1]})
    \end{bmatrix}.$}
\end{equation}
\item[(ii)] Moreover, for all $g \in \R^{T-L+1}$, there exists a function $G\in L^2(\Omega, \mathbb R^{T-L+1})$ such that
\begin{equation}\label{eq:colEquiRV}
    \begin{bmatrix}
        \mathcal H_{L}(\mathbf V_{[0,T-1]})\\
        \mathcal H_L(\mathbf Y_{[0,T-1]})
    \end{bmatrix} g = \begin{bmatrix}
        \mathcal H_{L}(\hat{\mathbf{v}}_{[0,T-1]})\\
        \mathcal H_L(\hat{\mathbf{y}}_{[0,T-1]})
    \end{bmatrix} G. 
\end{equation}
\end{subequations}
\end{itemize}
\end{lemma}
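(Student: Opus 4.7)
The plan is to reduce both claims to the deterministic fundamental lemma (Lemma~\ref{lem:FL_des}) by exploiting the behavioral equivalence established in Theorem~\ref{thm:equivalence}. Indeed, $(V,Y)\in\mathfrak S_{T-1}^{\text{i/o}}$ implies $(\mathsf v^i,\mathsf y^i)\in\mathfrak B_{T-1}^{\text{i/o}}$ for every $i\in\mathbb N$, so that each expansion coefficient pair is an honest length-$T$ input-output trajectory of the underlying deterministic LTI dynamics.

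For part~(i), I would simply apply the fundamental lemma twice. Since $\mathsf v^i$ is persistently exciting of order $L+n_x$ and $(\mathsf v^i,\mathsf y^i)\in\mathfrak B^{\text{i/o}}_{T-1}$, Lemma~\ref{lem:FL_des} yields
\[
\operatorname{colsp}\begin{bmatrix}\mathcal H_L(\mathsf v^i_{[0,T-1]})\\ \mathcal H_L(\mathsf y^i_{[0,T-1]})\end{bmatrix} = \mathfrak B_{L-1}^{\text{i/o}},
\]
where the right-hand side is the set of all stacked length-$L$ input-output trajectories. The same argument applied to $(\hat v,\hat y)$ produces the identical column span, which proves \eqref{eq:colEqui}.

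For part~(ii), my plan is to construct $G$ explicitly as a linear image of $(\mathbf V,\mathbf Y)$ through the Moore--Penrose pseudo-inverse. Setting
\[
G \doteq \begin{bmatrix}\mathcal H_L(\hat{\mathbf v}_{[0,T-1]})\\ \mathcal H_L(\hat{\mathbf y}_{[0,T-1]})\end{bmatrix}^{\dagger} \begin{bmatrix}\mathcal H_L(\mathbf V_{[0,T-1]})\\ \mathcal H_L(\mathbf Y_{[0,T-1]})\end{bmatrix} g,
\]
$G$ is the image of an $L^2$-random variable under a deterministic linear map and hence lies in $L^2(\Omega,\mathbb R^{T-L+1})$. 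To verify \eqref{eq:colEquiRV} it suffices to show that for $\prob$-almost all $\omega$ the vector $[\mathcal H_L(\mathbf V(\omega));\,\mathcal H_L(\mathbf Y(\omega))]g$ lies in the column span of $[\mathcal H_L(\hat{\mathbf v});\,\mathcal H_L(\hat{\mathbf y})]$: by Theorem~\ref{thm:equivalence}, the realization $(V(\omega),Y(\omega))$ belongs to $\mathfrak B_{T-1}^{\text{i/o}}$, so each column of its Hankel stack---and therefore any linear combination of columns---is a length-$L$ trajectory in $\mathfrak B_{L-1}^{\text{i/o}}$. Part~(i) identifies this space with the column span of the $(\hat v,\hat y)$-Hankel, on which the pseudo-inverse acts as a right inverse, yielding \eqref{eq:colEquiRV} almost surely.

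The delicate point is not the algebra but the requirement that the constructed $G$ be a measurable $L^2$-random variable rather than an $\omega$-wise collection of arbitrary pre-images. The pseudo-inverse device resolves this cleanly, because it is a bounded deterministic linear map between finite-dimensional spaces and thus inherits measurability and square-integrability from $(\mathbf V,\mathbf Y)$ for free. An alternative route expanding $(\mathbf V,\mathbf Y)$ in the orthogonal basis and setting $G=\sum_{i\in\mathbb N}\phi^i g^i$ with $g^i$ obtained coefficient-wise from part~(i) is also viable, but would additionally require verifying the $\ell^2$-summability $\sum_{i}\mean[\phi^i\phi^i]\lVert g^i\rVert^2<\infty$; this is precisely where the bounded operator norm of the pseudo-inverse, combined with the $L^2$-convergence of the PCE of $(\mathbf V,\mathbf Y)$, supplies the missing estimate.
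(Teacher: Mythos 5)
Your argument is correct, and it follows the same route the paper relies on: the paper itself defers the proof of this lemma to \cite{Pan21s}, but your reduction of both parts to the deterministic fundamental lemma via the behavioral equivalence of Theorem~\ref{thm:equivalence}, together with the Moore--Penrose pseudo-inverse construction of $G$ (which guarantees measurability and square-integrability for free), is exactly the device the paper uses in its proof of Lemma~\ref{lem:FL_stoch}. No gaps: part~(i) correctly identifies both column spans with the set of stacked length-$L$ trajectories (using persistency of excitation of $\mathsf v^i$ and of $\hat v$, respectively), and part~(ii) correctly reduces to the $\prob$-a.s.\ membership of $[\mathcal H_L(\mathbf V(\omega));\,\mathcal H_L(\mathbf Y(\omega))]g$ in that span, on which $\mathcal H\mathcal H^\dagger$ acts as the identity.
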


Observe that from the applications point of view, the above lemma is subject to a severe limitation since persistency of excitation is assumed \textit{for all expansion indices} $i \in \mathbb N$ in \eqref{eq:colEqui}. That is, in case of stochastic processes composed by i.i.d.\ random variables, the expansion coefficients have to be constant. Put differently, for i.i.d.\ random variables persistency of excitation of the corresponding expansion coefficients does not hold, cf.\ Example~\ref{ex:PCE_uniformity}. Hence, the assumption of persistency of excitation is, in general, too strong.

The next result recaps an observation made by~\cite{Pan21s}, which relaxes the relation of column-spaces from equivalence to inclusion. Doing so tremendously fosters the applicability since persistency of excitation is not required for any $\mathsf v^i$, where $i \in \mathbb{N}$ corresponds to the PCE dimension.
\begin{corollary}[Column-space inclusion] \label{cor:inclusion} 
    Let there exist a controllable realization behavior~$\mathfrak B_\infty$ based on a realization model given by an LTI system with state dimension~$n_x$. For $T \in \mathbb{Z}_+$, let $(V,Y) \in \mathfrak S_{T-1}^\text{i/o}$ with corresponding expansion coefficients $(\mathsf v, \mathsf y)\in\mathfrak C_{T-1}^\text{i/o}$ and $(\hat v,\hat y)\in\mathfrak B_{T-1}^\text{i/o}$. Assume that $\hat v$ is persistently exciting of order $L+n_x$. Then, for all $i \in \mathbb N$, we have the following inclusion 
    \begin{equation} \label{eq:colInclusion}
    \operatorname{colsp} 
    \begin{bmatrix}
    \mathcal H_{L}(\mathsf v^i_{[0,T-1]})\\
    \mathcal H_L(\mathsf y^i_{[0,T-1]})
\end{bmatrix}
\subseteq \operatorname{colsp}
\begin{bmatrix}
    \mathcal H_{L}(\hat{\mathbf{v}}_{[0,T-1]})\\
    \mathcal H_L(\hat{\mathbf{y}}_{[0,T-1]})
\end{bmatrix}.
\end{equation}
\end{corollary}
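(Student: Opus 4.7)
The plan is to combine the behavioral equivalence of Theorem~\ref{thm:equivalence} with a single application of the deterministic fundamental lemma (Lemma~\ref{lem:FL_des}) to the measured trajectory $(\hat v, \hat y)$. The crucial structural observation—which distinguishes this corollary from the equality statement of Lemma~\ref{lem:colEqui}—is that persistency of excitation is now imposed only on $\hat v$ and not on the expansion coefficients $\mathsf v^i$, so the one-sided inclusion is the strongest statement one can hope for.

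First I would invoke Theorem~\ref{thm:equivalence}: from $(V,Y) \in \mathfrak S_{T-1}^{\text{i/o}}$ and the behavioral lift $\Phi$, each expansion coefficient pair $(\mathsf v^i, \mathsf y^i)$ belongs to $\mathfrak B_{T-1}^{\text{i/o}}$ for every $i \in \mathbb N$. By time-invariance of $\mathfrak B_\infty$, every length-$L$ window $(\mathsf v^i, \mathsf y^i)|_{[t, t+L-1]}$ with $t \in \mathbb I_{[0, T-L]}$ is itself an input-output trajectory in $\mathfrak B_{L-1}^{\text{i/o}}$.

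Next I would apply Lemma~\ref{lem:FL_des} to $(\hat v, \hat y) \in \mathfrak B_{T-1}^{\text{i/o}}$. Since $\mathfrak B_\infty$ is controllable and $\hat v$ is persistently exciting of order $L + n_x$, the lemma asserts that \emph{every} element of $\mathfrak B_{L-1}^{\text{i/o}}$ lies in the column span of the right-hand Hankel matrix in \eqref{eq:colInclusion}. The $(t+1)$st column of the left-hand Hankel matrix is, by construction, the stacked trajectory $(\mathsf v^i, \mathsf y^i)|_{[t, t+L-1]}$, which by the previous step belongs to $\mathfrak B_{L-1}^{\text{i/o}}$. Hence each such column admits a representation as a linear combination of the columns of the Hankel matrices built from $(\hat v, \hat y)$. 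Concatenating across all column indices $t$ and all expansion indices $i$ yields the claimed inclusion.

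The main conceptual subtlety—rather than a computational obstacle—is to keep the two distinct indices of $\mathfrak C_\infty$ cleanly separated: the time index $k$, along which persistency of excitation acts, versus the expansion index $i$, along which the inclusion must hold uniformly. Because Theorem~\ref{thm:equivalence} delivers membership $(\mathsf v^i, \mathsf y^i) \in \mathfrak B_{T-1}^{\text{i/o}}$ for each $i$ separately, while the right-hand Hankel matrix depends only on the fixed deterministic trajectory $(\hat v, \hat y)$, a single application of the fundamental lemma settles the inclusion simultaneously for all $i \in \mathbb N$. This also clarifies a posteriori why the reverse inclusion would in general require additional PE on the $\mathsf v^i$ (as in Lemma~\ref{lem:colEqui}), in line with the Example~\ref{ex:PCE_uniformity} discussion of i.i.d.\ processes.
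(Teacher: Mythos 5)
Your proof is correct and is exactly the intended argument: the paper itself defers the proof to \cite{Pan21s}, but the reasoning there is precisely this one --- each column of the coefficient Hankel matrices is a length-$L$ window of $(\mathsf v^i,\mathsf y^i)\in\mathfrak B_{T-1}^{\text{i/o}}$ and hence, by time-invariance and one application of Lemma~\ref{lem:FL_des} to the persistently exciting realization data $(\hat v,\hat y)$, lies in the column span of the right-hand Hankel matrix. Your closing remark on why only the one-sided inclusion survives without persistency of excitation of the $\mathsf v^i$ matches the paper's discussion following Lemma~\ref{lem:colEqui}.
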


Based on the two results stated above, we can now formulate the fundamental lemma for stochastic LTI systems of \cite{Pan21s}. 
\begin{lemma}[Stochastic fundamental lemma] 
    \label{lem:FL_stoch}
    Let there exist a controllable realization behavior~$\mathfrak B_\infty$ based on a realization model given by LTI system with state dimension~$n_x$. Let $(v,y)\in\mathfrak B_{T-1}^\text{i/o}$ be such that $v$ is persistently exciting of order $L+n_x$. Then, the following statements hold:
     \begin{subequations}
    \begin{enumerate}
        \item $(\tilde{\mathsf v},\tilde{\mathsf y})\in\mathfrak C_{L-1}^\text{i/o}$ if and only if there is $\mathsf g\in \ell^2(\mathbb R^{T-L+1})$ such that
        \begin{equation}
            \label{eq:FL_coef}
            \begin{bmatrix}
                \mathsf{\tilde v}^i_{[0,L-1]}\\ \mathsf{\tilde y}^i_{[0,L-1]}
            \end{bmatrix} = \begin{bmatrix}
                \mathcal H_{L} (\mathbf{v}_{[0,T-1]}) \\ \mathcal H_{L} (\mathbf{y}_{[0,T-1]})
            \end{bmatrix}\mathsf g^i
        \end{equation}
        for all $i\in\mathbb N$.
        \item $(\tilde{V},\tilde{Y})\in\mathfrak S_{L-1}^\text{i/o}$ if and only if there is $G\in L^2(\Omega,\mathbb R^{T-L+1})$ such that
        \begin{equation}
            \label{eq:FL_stoch_eq}
            \begin{bmatrix}
                \mathbf{\tilde V}_{[0,L-1]}\\ \mathbf{\tilde Y}_{[0,L-1]}
            \end{bmatrix} = \begin{bmatrix}
                \mathcal H_{L} (\mathbf{v}_{[0,T-1]}) \\ \mathcal H_{L} (\mathbf{y}_{[0,T-1]})
            \end{bmatrix} G.
        \end{equation}
    \end{enumerate}
     \end{subequations}
\end{lemma}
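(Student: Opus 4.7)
The plan is to treat part~(i) via a reduction to the deterministic Lemma~\ref{lem:FL_des} applied index-wise along the expansion basis, and to obtain part~(ii) by lifting the result of~(i) through the $\ell^2$--$L^2$ isomorphism established in Theorem~\ref{thm:lift}. For part~(i), I would first invoke Theorem~\ref{thm:equivalence} to replace $(\tilde{\mathsf v},\tilde{\mathsf y})\in\mathfrak C_{L-1}^{\text{i/o}}$ by the equivalent condition $(\tilde{\mathsf v}^i,\tilde{\mathsf y}^i)\in\mathfrak B_{L-1}^{\text{i/o}}$ for every $i\in\mathbb N$. Since $v$ is persistently exciting of order $L+n_x$ and $\mathfrak B_\infty$ is controllable, the deterministic Lemma~\ref{lem:FL_des} then supplies, for each $i$, a vector $\mathsf g^i\in\mathbb R^{T-L+1}$ realizing \eqref{eq:FL_coef}. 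The only non-routine point is that the lemma needs $\mathsf g=(\mathsf g^i)_{i\in\mathbb N}$ to be square-summable.

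To handle square-summability, I would select $\mathsf g^i$ uniformly via the Moore--Penrose pseudoinverse of the data Hankel matrix
\[
H \doteq \begin{bmatrix} \mathcal H_{L}(\mathbf v_{[0,T-1]})\\ \mathcal H_{L}(\mathbf y_{[0,T-1]})\end{bmatrix},
\]
i.e.\ $\mathsf g^i \doteq H^{\dagger}\bigl[(\tilde{\mathsf v}^i_{[0,L-1]})^\top,(\tilde{\mathsf y}^i_{[0,L-1]})^\top\bigr]^\top$. This yields the norm bound $\lVert\mathsf g^i\rVert\le\lVert H^{\dagger}\rVert\,\bigl(\lVert\tilde{\mathsf v}^i_{[0,L-1]}\rVert^2+\lVert\tilde{\mathsf y}^i_{[0,L-1]}\rVert^2\bigr)^{1/2}$, so $\sum_i\lVert\mathsf g^i\rVert^2<\infty$ follows from the definition of $\mathfrak C_{L-1}^{\text{i/o}}$ in \eqref{eq:CoeffBehav}, which requires the coefficient sequences $\tilde{\mathsf v}_k$ and $\tilde{\mathsf y}_k$ to lie in $\ell^2$ for each $k\in\mathbb I_{[0,L-1]}$. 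The converse direction is immediate: if $\mathsf g\in\ell^2$ satisfies \eqref{eq:FL_coef}, then Lemma~\ref{lem:FL_des} gives $(\tilde{\mathsf v}^i,\tilde{\mathsf y}^i)\in\mathfrak B_{L-1}^{\text{i/o}}$ for each $i$, and the $\ell^2$-summability of $\mathsf g$ combined with the boundedness of $H$ shows that the induced coefficient sequences are in the $\ell^2$-space required by $\mathfrak C_{L-1}^{\text{i/o}}$; Theorem~\ref{thm:equivalence} then closes the loop.

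For part~(ii), I would use the behavioral lift $\Phi$ from Theorem~\ref{thm:lift} as the bridge. Given $(\tilde V,\tilde Y)\in\mathfrak S_{L-1}^{\text{i/o}}$, Theorem~\ref{thm:equivalence} supplies PCE coefficients $(\tilde{\mathsf v},\tilde{\mathsf y})\in\mathfrak C_{L-1}^{\text{i/o}}$; part~(i) then yields $\mathsf g\in\ell^2(\mathbb R^{T-L+1})$ satisfying \eqref{eq:FL_coef} for all $i$. I would then define
\[
G \doteq \sum_{i\in\mathbb N}\mathsf g^i\,\phi^i,
\]
which converges in $L^2(\Omega,\mathbb R^{T-L+1})$ by the Fischer--Riesz isomorphism invoked in the proof of Theorem~\ref{thm:lift}. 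Because the Hankel matrix $H$ is deterministic, it commutes with the $L^2$-convergent series, and termwise identification of expansion coefficients shows that the resulting $G$ satisfies \eqref{eq:FL_stoch_eq}. For the converse, given $G\in L^2(\Omega,\mathbb R^{T-L+1})$ satisfying \eqref{eq:FL_stoch_eq}, I would project onto $\phi^i$ via the coefficient formulas \eqref{eq:coeffdef}, obtaining $\mathsf g^i=\mean[\phi^i G]/\mean[\phi^i\phi^i]$ which lies in $\ell^2$ since $G\in L^2$, and \eqref{eq:FL_coef} holds for every $i$; then part~(i) and Theorem~\ref{thm:equivalence} deliver $(\tilde V,\tilde Y)\in\mathfrak S_{L-1}^{\text{i/o}}$.

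The main obstacle is the square-summability in part~(i), which is what genuinely distinguishes the stochastic lemma from a mere coordinate-wise copy of Lemma~\ref{lem:FL_des}; once a uniform selection (pseudoinverse) is fixed, this becomes an elementary operator-norm estimate. A subtle complementary point, in line with Remark~\ref{rem:liftPCE}, is that when $(\phi^i)_{i\in\mathbb N}$ is a PCE basis derived from a family $\mathfrak F$, the equivalence in part~(ii) should be read modulo the restriction to $L^2((\Omega,\sigma(\mathfrak F),\prob),\mathbb R^{n_v+n_y})$, which is the natural domain of the lift $\Phi$.
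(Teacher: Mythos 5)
Your proposal is correct and follows essentially the same route as the paper: index-wise application of the deterministic fundamental lemma, a pseudo-inverse selection of $\mathsf g^i$ to secure square-summability (which the paper merely declares ``obvious'' while you supply the operator-norm estimate), and the lift to part~(ii) via $G=\sum_{i\in\mathbb N}\phi^i\mathsf g^i$ together with Theorem~\ref{thm:lift}. The only cosmetic difference is that the paper cites Corollary~\ref{cor:inclusion} for the column-space inclusion where you invoke Lemma~\ref{lem:FL_des} directly, which amounts to the same thing.
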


\begin{proof}
    The first statement leverages the column space inclusion of Corollary~\ref{cor:inclusion} and follows together with Lemma~\ref{lem:FL_des} and Theorem~\ref{thm:lift}. The linear equation~\eqref{eq:FL_coef} is under-determined. Therefore, given some trajectory $(\tilde{\mathsf v},\tilde{\mathsf y})\in\mathfrak C_{L-1}^\text{i/o}$ a particular solution $\mathsf g$ is given in terms of the pseudo-inverse by
    \begin{equation*}
        \mathsf g^i = \begin{bmatrix}
            \mathcal H_{L} (\mathbf{v}_{[0,T-1]}) \\ \mathcal H_{L} (\mathbf{y}_{[0,T-1]})
        \end{bmatrix}^\dagger\begin{bmatrix}
            \mathsf{\tilde v}^i_{[0,L-1]}\\ \mathsf{\tilde y}^i_{[0,L-1]}
        \end{bmatrix}.
    \end{equation*}
    The square summability of this $\mathsf g^i, i\in \mathbb N$ is obvious.
    
    The second assertion follows combining the first one and Theorem~\ref{thm:lift}, where the relationship between $G$ and $\mathsf g$ is established by $G=\sum_{i\in\mathbb N} \phi^i \mathsf g^i$.
\end{proof}

\subsection{Comments}
Several comments on the stochastic fundamental lemma and its context are in order.

\noindent \textbf{Disturbance Modeling and Data Acquisition}. 
For starters, we emphasize that \textit{the Hankel matrices in \eqref{eq:FL_coef} and \eqref{eq:FL_stoch_eq}} are in terms of \textit{realization data}. That is, \textit{they are constructed from measurements} not from PCE data nor from random variables. 
    
In this context, we remark that the conceptual split of the exogenous inputs $(v,V)$ into manipulated controls $(u,U)$ and process disturbances $(u,W)$---which is expressed in \eqref{eq:inputsplit}---can be directly translated by splitting the Hankel matrices accordingly
\[
    \mathcal H_{L} (\mathbf{v}_{[0,T-1]}) = \begin{bmatrix}
        \mathcal H_{L} (\mathbf{u}_{[0,T-1]}) \\
        \mathcal H_{L} (\mathbf{w}_{[0,T-1]})
    \end{bmatrix}.
\]
In turn, this implies that disturbance data $\mathbf{w}_{[0,T-1]}$ is required for the application of the stochastic  fundamental lemma. This can be either done via estimation or, for certain applications, via measurements. The latter is, e.g., the case for energy systems where $(w, W)$ models volatile renewables or consumer demand, i.e., stochastic processes whose realization are accessible through measurements. For first results on the estimation of past disturbance data in case of LTI systems, we refer to~\cite{Pan21s}.
    
The preceding issue hints to the fact that the stochastic process disturbance $W$ could be further split into disturbance and noise
\[
    W_k = W^\text{d}_k +  W^\text{n}_k
\]
where $W^\text{d}_k$ models non-Gaussian disturbances and $W^\text{n}_k$ reflects Gaussian/non-Gaussian noise. Put differently, $W^\text{n}_k$ can be used to capture the $i.i.d.$ zero-mean part of the disturbance, which is usually denotes as \textit{process noise}, while $W^\text{d}_k$ models further disturbances---potentially non-$i.i.d.$ component-wise and in time---acting on the system.
    
Moreover, in any real world application, measurement noise will corrupt the data entering the Hankel matrices. Indeed, in case of LTI systems the issue of robustness of the Hankel matrix descriptions with respect to measurements corrupted by noise has received widespread attention, see  \cite{Dorfler22a,yin2021data,yin2021maximum} for analysis of Hankel matrix predictions and \cite{coulson2019regularized,Berberich22} for robustness analysis in context of data-driven predictive control. \vspace{2mm}
    
\noindent \textbf{Alternative Approaches to Uncertainty Quantification}. 
It is worth to be remarked that the stochastic fundamental lemma provides a handle for uncertainty propagation and Uncertainty Quantification (UQ) in systems which does not rely on explicit model knowledge. To illustrate the appeal of the proposed approach consider 
\begin{align*}
     	x\inst{k+1} &= A(\Theta) x\inst{k} +\widetilde B(\Theta) v\inst{k}\\
   		 y\inst{k} &= C(\Theta)  x\inst{k}+\widetilde D(\Theta)  v\inst{k}
\end{align*}
where $\Theta \in L^2(\Omega,\mathbb R^{n_\theta})$ models the \emph{epistemic} uncertainty surrounding system matrices. Put differently, the realization $\Theta(\omega)$ is constant for all time steps $k\in \mathbb N$. Under the conditions outlined in Section~\ref{sec:recapI}, the deterministic fundamental lemma (Lemma~\ref{lem:FL_des}) elegantly covers these cases in the input-output setting and in the input-output-state setting, i.e., it enables UQ. Indeed, without further elaboration, we remark 
\begin{itemize}
    \item that, under the assumption $\Theta\in L^2(\Omega,\mathbb R^{n_\theta})$, PCE has seen frequent use for model-based UQ and control design for LTI counterpart of the uncertain system given above, see~\cite{Wan21,Wan22,Fisher08}, and
    \item that the data-driven setting does not require the assumption $\Theta\in L^2(\Omega,\mathbb R^{n_\theta})$ as long as any realization $\Theta(\omega)$  remains finite and constant over the considered time horizon.
\end{itemize}
Lemma \ref{lem:FL_stoch} pushes UQ and uncertainty propagation for uncertain systems even further as it allows to handle the case \begin{align*}
        X\inst{k+1} &= A(\Theta) X\inst{k} + B(\Theta) U\inst{k} + F(\Theta) W\inst{k}\\
   		 Y\inst{k} &= C(\Theta)  X\inst{k}+ D(\Theta) U\inst{k} +  H(\Theta) W\inst{k}
\end{align*}
with $X_k, U_K, W_K, Y_k \in L^2(\Omega,\mathbb R^{n_z})$, $n_z \in \{n_x,n_u,n_w,n_y\}$ and $\Theta \not \in L^2(\Omega,\mathbb R^{n_\theta})$ but constant over time. We remark that already a model-based PCE approach to the conceptually simpler setting with $\Theta \in L^2(\Omega,\mathbb R^{n_\theta})$ is subject to the fundamental complication that the multiplication of the uncertain system matrices with the random variable states, inputs etc.---e.g., $A(\Theta) X\inst{k}$, $B(\Theta) U\inst{k}$, \dots---leads to multiplication of PCE bases, which induce several technicalities in the Galerkin projection, cf.\ \cite{Muehlpfordt18}. Note that the data-driven approach based on the stochastic fundamental lemma alleviates such problems. Indeed the stochastic fundamental lemma allows to untangle \emph{epistemic uncertainty}---i.e. model-related uncertainty above covered by $\Theta$---from \emph{aleatoric uncertainty} which in the examples above is represented by $W\inst{k}$, cf. \citep{hullermeier2021aleatoric,umlauft2020real}. \vspace{2mm}

\noindent \textbf{Alternative Hankel Constructions in the Lemma}. 
One may wonder what happens if the column space representation in \eqref{eq:FL_stoch_eq} is altered. There are two main ways to do so:

(a) Use Hankel matrices in random variables and a real column-space selector $g$. Leaving the non-subtle technicality of defining persistency of excitation for $\mathbf{V}_{[0,T-1]}$ aside, it can be shown that  the map $\lambda: \mathbb R^{T-L+1} \to L^2(\Omega,\mathbb R^{n_v})^L \times L^2(\Omega,\mathbb R^{n_y})^L$,
\[
    g\mapsto \lambda(g) \doteq \begin{bmatrix}
        \mathcal H_{L} (\mathbf{V}_{[0,T-1]}) \\ \mathcal H_{L} (\mathbf{Y}_{[0,T-1]})
    \end{bmatrix} g,
\]
is indeed such that its codomain satisfies
\[
    \lambda(\mathbb R^{T-L+1})\subset {\mathfrak S_{L-1}^\text{i/o}}.
\]
That is, the image of $g$ under the linear map $\lambda$ specifies an element of $\mathfrak S_{L-1}^\text{i/o}$. Yet, this construction does not span the entire finite-time behavior $\mathfrak S_{L-1}^\text{i/o}$. A detailed PCE-based construction of an LTI counterexample and the proof of the inclusion statement
are given by \cite{Pan21s}. 

(b) Use  Hankel matrices in random variables and a random-variable column-space selector $G$,  i.e., consider the image of a random variable $G:\Omega\rightarrow R^{T-L+1}$ under the Hankel matrices in random variables. At this point, one may conjecture that the fundamental inclusion $\mathfrak B^\text{i/o}_\infty \subset \mathfrak S^\text{i/o}_\infty$ implies that the map $\Lambda$,
 \[
 G\mapsto \Lambda(G)\doteq       \begin{bmatrix}
                \mathcal H_{L} (\mathbf{V}_{[0,T-1]}) \\ \mathcal H_{L} (\mathbf{Y}_{[0,T-1]})
            \end{bmatrix} G,
\]
defined on an appropriate domain gives the entire manifest behavior $\mathfrak S_{L-1}^\text{i/o}$. However, in such a setting one has to handle non-linear operations on PCEs which renders the formal analysis more complicated. In particular, the square-integrability of $\Lambda(G)$ has to be insured. This is in general not the case for $G\in L^2(\Omega, \mathbb R^{T-L+1})$, but for a bounded function $G$.
\section{Data-Driven Stochastic Optimal Control} \label{sec:StochOCPs}
In the preceding sections we introduced behavioral characterizations of stochastic systems. Next we turn towards using these concepts for control. Specifically, we discuss data-driven optimal control of stochastic systems and the numerical solution of the arising problem.

\subsection{Behavioral Problem Formulations with $\mathfrak S_{N}$ and $\mathfrak C_N$ } \label{sec:OCP_explicit}

For starters, we consider the stochastic explicit LTI system~\eqref{eq:RVdynamics} with input partition~\eqref{eq:inputsplit}, we model the stochastic input $U\inst{k}$ as a stochastic process adapted to the filtration $(\mathcal G_k)_{k\in\mathbb N}$ with $\mathcal G_k = \sigma(\mathbf Y_{[k,k+n_x]})$ as in Section~\ref{sec:model_Stochastic}. In the underlying filtered probability space $(\Omega, \mathcal F, (\mathcal G_k)_{k\in\mathbb N}, \prob)$, the control input $U_k$ may only depend on the information available up to the time $k$.

Given a trajectory $(\widehat U,\widehat W,\widehat Y)\in\mathfrak S_{n_x-1}^\text{i/o}$ observed in the past we consider the conceptual behavioral Optimal Control Problem (OCP) associated with \eqref{eq:Realdynamics} for the finite optimization horizon $N$ 
\begin{subequations}\label{eq:OCPRV}
\begin{align}
  &  \operatorname*{minimize}_{U,W,Y}  \,
    \sum_{k=n_x}^{N+n_x-1} \mean[Y_k^\top QY_k + U^\top_k RU_k]\\
  &   \qquad\text{ subject to }\notag\\
  &\qquad (U,W,Y) \in\mathfrak S_{N+n_x-1}^\text{i/o}, \label{eq:OCPRV_behav}\\
      &\qquad \begin{bmatrix}
             \mathbf{ Y}_{[0,n_x-1]}\\
             \mathbf{ U}_{[0,n_x-1]},\\        
             \end{bmatrix} 
             = \begin{bmatrix}
             \mathbf{\widehat Y}_{[0,n_x-1]}\\
             \mathbf{\widehat U}_{[0,n_x-1]} \\
         \end{bmatrix},   \label{eq:OCPRV_consist_cond} \\
            &\qquad\mathbf{W}_{[0,N+n_x-1]} = \widehat{\mathbf{W}}_{[0,N+n_x-1]}, \label{eq:OCPRV_noise}
     \end{align}
\end{subequations}
where $Q$ and $R$ are symmetric positive definite matrices of appropriate dimensions. Optimal solutions are denoted by $ (U^\star,W^\star, Y^\star)$ whereby, due to the noise specification~\eqref{eq:OCPRV_noise}, we have that $W^\star = 
\widehat{W}$. 
The \emph{initial} condition~\eqref{eq:OCPRV_consist_cond} {of length $n_x$} together with the observability of the underlying system guarantees that the latent state trajectory is uniquely defined. 

As the manifest behavior $\mathfrak S_{N+n_x-1}^{\text{i/o}}$ can be characterized by Hankel matrices in realization data (cf.\ Lemma~\ref{lem:FL_stoch}), the following data-driven reformulation of OCP~\eqref{eq:OCPRV} is immediate: 
\begin{subequations}\label{eq:OCPdata}
\begin{align}
    &\operatorname*{minimize}_{U,Y, G }   \,
    \sum_{k=n_x}^{N+n_x-1} \mean[Y_k^\top QY_k + U^\top_k RU_k]\\
   &\qquad \text{ subject to }\qquad \nonumber \\
  & \qquad\begin{bmatrix}
         \mathbf{Y}_{[0,N+n_x-1]}\\
         \mathbf{U}_{[0,N+n_x-1]}\\
        \widehat{\mathbf{W}}_{[0,N+n_x-1]}
    \end{bmatrix}
    =
        \begin{bmatrix}
            \mathcal H_{N+n_x-1}\left(\mathbf{y}^\text{d}_{[0,T-1]}\right)\\
            \mathcal H_{N+n_x-1}\left(\mathbf{u}^\text{d}_{[0,T-1]}\right)\\
            \mathcal H_{N+n_x-1}\left(\mathbf{w}^\text{d}_{[0,T-1]}\right)
        \end{bmatrix}
        G \label{eq:OCPdata_hankel},
\\
    & \qquad  \begin{bmatrix}
             \mathbf{ Y}_{[0,n_x-1]}\\
             \mathbf{ U}_{[0,n_x-1]}        
             \end{bmatrix} 
             = \begin{bmatrix}
             \mathbf{\widehat Y}_{[0,n_x-1]}\\
             \mathbf{\widehat U}_{[0,n_x-1]} 
         \end{bmatrix}.   \label{eq:OCPdata_consist_cond}
     \end{align}
\end{subequations}
Formally the decision variables live in the following spaces,
\begin{align*}
 Y &\in \left( L^2(\Omega,\mathbb R^{n_v})\right)^{N+n_x},
 U \in \left( L^2(\Omega,\mathbb R^{n_u})\right)^{N+n_x}, \\
G &\in L^2(\Omega,\mathbb R^{T-N-n_x+1}).
\end{align*}
The comparison of the OCPs~\eqref{eq:OCPRV} and~\eqref{eq:OCPdata} reveals key differences:
\begin{itemize}
    \item The behavioral membership relation~\eqref{eq:OCPRV_behav} is replaced by the stochastic Hankel matrix description given by~\eqref{eq:OCPdata_hankel}, which is derived in Lem\-ma~\ref{lem:FL_stoch}.
    \item The decision variables change from the element of the behavior $( U, W, Y)$ to $(U,Y,G)$, i.e. the inputs and outputs and the column-space selector vector. 
\item The need to specify the disturbances in \eqref{eq:OCPRV_noise} is alleviated in OCP~\eqref{eq:OCPdata} as this data is directly included in \eqref{eq:OCPdata_hankel}.
\end{itemize}
Moreover, we remark that with straight-forward modifications one can change the initial conditions in OCP~\eqref{eq:OCPRV} and in~\eqref{eq:OCPdata} to (observed/measured) realization values of $Y(\omega), $ $ W(\omega), U(\omega)$ given over $[0, n_x-1]$. Importantly, in \eqref{eq:OCPdata} past disturbance data is also required in the Hankel matrix appearing in \eqref{eq:OCPdata_hankel}.

Given an orthogonal basis of $L^2(\Omega,\mathbb R)$, we want to transfer the OCP~\eqref{eq:OCPRV} and~\eqref{eq:OCPdata} into the setting of expansion coefficients by means of behavioral lift (see Theorem~\ref{thm:lift}). Hereby exactness in the series expansion (see Definition~\ref{def:exactPCE}) of the random variables plays an important role for the numerical solvability of the OCP.

\begin{lemma}[Exact uncertainty propagation via expansions]\label{lem:no_truncation_error}~\\
    Consider the stochastic explicit LTI system~\eqref{eq:RVdynamics} and suppose that $\widehat{W}_{k}$ for $k \in \I_{[0,N+n_x-1]}$, and 
    $\widehat{Y}_{k}$, $\widehat{U}_k$ for $k \in \I_{[0,n_x-1]}$ admit exact series expansion with finite dimensions~$p_w$ and~$p_{\text{ini}}$, i.e., $\widehat{W}_{k}= \sum_{i=0}^{p_w-1}\hat{\pce{w}}^i \phi_{k}^i$, $\widehat{Y}_{k} = \sum_{i=0}^{p_{\text{ini}}-1}\hat{\pce{y}}_k^i \phi_{\text{ini}}^i$, and $\widehat{U}_{k}=  \sum_{i=0}^{p_{\text{ini}}-1}\hat{\pce{u}}_k^i \phi_{\text{ini}}^i$, respectively. {Assume that $\phi_{\text{ini}}^0=\phi_{k}^0=1$ for all $k \in \I_{[0,N+n_x-1]}$.}  Then,
\begin{itemize}
    \item[(i)] the optimal solution $({U}^\star ,{Y}^\star,G^\star)$ of OCP~\eqref{eq:OCPdata} with horizon~$N$ admits exact series expansion with $p$ terms, where $p$ is given by
    \begin{equation*}
    	p = p_{\text{ini}}
    	+( N+ n_x)(p_w-1) \in \mathbb Z_+,
    \end{equation*}
    \item[(ii)] and the finite-dimensional joint basis $(\phi^i)_{i=0}^{p-1}$ reads
    \begin{align*}
        (\phi^i)_{i=0}^{p-1} = (1, \phi_\text{ini}^1,\dots, \phi_\text{ini}^{p_\text{ini}-1}&, \phi_0^1,\dots, \phi_0^{p_w-1},\dots\\
         & \phi_{N+n_x-1}^1,\dots, \phi_{N+n_x-1}^{p_w-1}).
    \end{align*}
\end{itemize}
\end{lemma}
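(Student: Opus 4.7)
The plan is to use the behavioral lift (Theorem~\ref{thm:lift}) together with the structural decomposition afforded by PCE to decouple OCP~\eqref{eq:OCPdata} along basis directions, and then to argue that directions outside the joint basis $\Phi = (\phi^i)_{i=0}^{p-1}$ cannot contribute to any optimal solution.

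First, I would construct $\Phi$ by enumerating the union $\{\phi^i_{\text{ini}}\}_{i=0}^{p_{\text{ini}}-1} \cup \bigcup_{k=0}^{N+n_x-1} \{\phi^i_k\}_{i=0}^{p_w-1}$ in the order given in statement~(ii). Since by hypothesis $\phi^0_{\text{ini}} = \phi^0_k = 1$, the constant direction is shared across all sub-bases, so a direct count gives $1 + (p_{\text{ini}}-1) + (N+n_x)(p_w-1) = p$ distinct functions, matching~(ii). Under the standing PCE assumption that the basic random variables underlying the initial conditions and the disturbances at distinct times are mutually independent, these $p$ functions are linearly independent in $L^2(\Omega,\R)$. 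Extending $\Phi$ by further orthogonal elements yields a complete orthogonal system $(\phi^i)_{i\in\N}$ of $L^2(\Omega,\R)$ in which $\widehat{\mathbf{W}}$, $\widehat{\mathbf{Y}}|_{[0,n_x-1]}$, and $\widehat{\mathbf{U}}|_{[0,n_x-1]}$ have zero expansion coefficients beyond index $p-1$.

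Next, I would apply a Galerkin projection onto $(\phi^i)_{i\in\N}$: by Theorem~\ref{thm:equivalence} the random-variable constraints~\eqref{eq:OCPdata_hankel} and \eqref{eq:OCPdata_consist_cond} hold if and only if the corresponding deterministic constraints hold for every coefficient index $i \in \N$, and by orthogonality the cost splits into $\sum_{i\in\N} \mean[(\phi^i)^2] \sum_k \bigl((\mathsf y_k^i)^\top Q \mathsf y_k^i + (\mathsf u_k^i)^\top R \mathsf u_k^i\bigr)$. Hence OCP~\eqref{eq:OCPdata} decouples into independent deterministic sub-OCPs indexed by $i$. For every $i \geq p$ the data coefficients $\hat{\mathsf w}^i$, $\hat{\mathsf y}^i_{[0,n_x-1]}$, and $\hat{\mathsf u}^i_{[0,n_x-1]}$ vanish by construction of $\Phi$, so the choice $\mathsf g^{\star,i} = 0$ is feasible and yields $\mathsf u^{\star,i} = \mathsf y^{\star,i} = 0$, attaining the zero lower bound of the non-negative quadratic sub-cost; positive definiteness of $Q$ and $R$ makes these zero coefficients cost-optimal. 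A pseudo-inverse selection of $\mathsf g^{\star,i}$, as in the proof of Lemma~\ref{lem:FL_stoch}, is consistent with $\mathsf g^{\star,i}=0$ for $i\geq p$. Reassembling via $U^\star = \sum_{i=0}^{p-1} \mathsf u^{\star,i}\phi^i$, and analogously for $Y^\star$ and $G^\star$, establishes claim~(i).

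The main obstacle I anticipate is the verification that the enumerated functions in $\Phi$ are linearly independent, which rests on the standing independence assumption on the basic random variables generating the various PCE sub-bases; without it the count $p$ could over-estimate the true dimension and a more careful linear-algebraic argument collapsing repeated directions would be required. A secondary subtlety is the non-uniqueness of $G$ in the Hankel equation, which is handled by the pseudo-inverse construction of Lemma~\ref{lem:FL_stoch}, guaranteeing that among optimal selectors there is one supported entirely in $\Phi$.
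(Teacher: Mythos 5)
Your proposal is correct and follows essentially the same route as the paper, which only sketches the argument (linearity of the Hankel map plus the joint-basis construction, with details deferred to \cite{Pan21s}): you build the joint basis $(\phi^i)_{i=0}^{p-1}$ by merging the sub-bases with the shared constant direction and exploit that the Hankel constraint and the cost decouple along orthogonal basis directions. Your additional step---showing via positive definiteness of $Q$ and $R$ that the decoupled sub-problems for $i\geq p$ are optimally solved by zero, so that the \emph{optimal} (not merely some feasible) $(U^\star,Y^\star,G^\star)$ is supported in the joint basis---is a worthwhile tightening of the paper's sketch, and your caveats on independence of the basic random variables and on the non-uniqueness of $G$ are exactly the points the detailed reference handles.
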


\noindent The proof is based on the observation that the Hankel matrix description~\eqref{eq:OCPdata_hankel} is a linear map, i.e., if {the set of basis vectors needed to describe} $G$ contains the basis vectors that are necessary to represent ${\widehat W}$, ${\widehat Y}$,  and ${\widehat U}$ then the image ${ Y}$  and ${ U}$ can be expressed exactly in the joint basis. Exploiting the assumption of exact series expansion for the problem data $\widehat W_k,  \widehat Y_k, \widehat U_k$ leads directly to the basis construction. A detailed proof is given by \cite{Pan21s}. 

The previous lemma implies that as the prediction horizon $N$ grows, the expansion order required for exactness in (i) increases linearly.  The reason is that the realizations of $W_k$ are independent at each time instant $k \leq N+n_x-1$. Hence, the finite-dimensional polynomial basis in (ii) enables exact propagation of the uncertainties over finite horizons. Moreover, observe that as the number of involved basis vectors grows linearly with the horizon $N$, the number of decision variables in a expansion reformulation grows quadratically in~$N$.

\begin{figure*}[t]
    \begin{center}
        \begin{tikzpicture}[my triangle/.style={-{Triangle[width=\the\dimexpr1.8\pgflinewidth,length=\the\dimexpr0.8\pgflinewidth]}}, my double triangle/.style={{Triangle[width=\the\dimexpr1.8\pgflinewidth,length=\the\dimexpr0.8\pgflinewidth]}-{Triangle[width=\the\dimexpr1.8\pgflinewidth,length=\the\dimexpr0.8\pgflinewidth]}}]
    \footnotesize
    \node[rectangle, draw=none, fill=gray2, minimum height=13.5em, text width=0.35\textwidth, outer sep=2.5pt, inner sep=5pt] (A) {OCP~\eqref{eq:OCPRV}\\\[
        \begin{gathered}
          \operatorname*{minimize}_{U,W,Y}  \,
            \sum_{k=n_x}^{N+n_x-1} \mean[Y_k^\top QY_k + U^\top_k RU_k]\\
            \parbox[t]{5cm}{ subject to initial conditions, noise specifications, and} \\
           (U,W,Y) \in\mathfrak S_{N+n_x-1}^\text{i/o}
             \end{gathered}\]};

    \node[right=45pt of A, rectangle, draw=none, fill=gray3, minimum height=13.5em, text width=0.35\textwidth, outer sep=2.5pt, inner sep=5pt] (B) { OCP~\eqref{eq:OCPPCE}\\ \[
    \begin{gathered}
      \operatorname*{minimize}_{\mathsf u, \mathsf w, \mathsf y}  \,
        \sum_{i=0}^{p-1}\sum_{k=n_x}^{N+n_x-1} \mean[\phi^i \phi^i]\left((\mathsf y_k^i)^\top Q\mathsf y^i_k + (\mathsf u^i_k)^\top R\mathsf u^i_k\right)\\
      \parbox[t]{5cm}{ subject to initial conditions, noise specifications, and for all $i\in\mathbb I_{[0,p-1]}$} \\
       (\mathsf u^i,\mathsf w^i,\mathsf y^i) \in\mathfrak C_{N+n_x-1}^\text{i/o}\\
       \pce{u}_{k}^{i'} = 0, \forall i' \in \I_{[p_\text{ini}+k(p_w-1),p-1]},\, \forall k \in \I_{[0,N+n_x-1]}
         \end{gathered}\]};

    \node[below=20pt of A, rectangle, draw=none, fill=lightblue1, minimum height=16.5em, text width=0.35\textwidth, outer sep=2.5pt, inner sep=5pt] (C) {OCP~\eqref{eq:OCPdata}\\
    \[\begin{gathered}\operatorname*{minimize}_{U,Y, G }   \,
    \sum_{k=n_x}^{N+n_x-1} \mean[Y_k^\top QY_k + U^\top_k RU_k]\\
   \text{ subject to initial conditions and}\\
  \begin{bmatrix}
         \mathbf{Y}_{[0,N+n_x-1]}\\
         \mathbf{U}_{[0,N+n_x-1]}\\
        \widehat{\mathbf{W}}_{[0,N+n_x-1]}
    \end{bmatrix}
    =
        \begin{bmatrix}
            \mathcal H_{N+n_x}\left(\mathbf{y}^\text{d}_{[0,T-1]}\right)\\
            \mathcal H_{N+n_x}\left(\mathbf{u}^\text{d}_{[0,T-1]}\right)\\
            \mathcal H_{N+n_x}\left(\mathbf{w}^\text{d}_{[0,T-1]}\right)
        \end{bmatrix}G
    \end{gathered}\]};

    \node[right=45pt of C, rectangle, draw=none, fill=lightblue2, minimum height=16.5em, text width=0.35\textwidth, outer sep=2.5pt, inner sep=5pt] (D) {OCP~\eqref{eq:OCPdataPCE}\\
    \[\begin{gathered}\operatorname*{minimize}_{\mathsf u,\mathsf y, \mathsf g }   \,
        \sum_{i=0}^{p-1}\sum_{k=n_x}^{N+n_x-1} \mean[\phi^i \phi^i]\left((\mathsf y_k^i)^\top Q\mathsf y^i_k + (\mathsf u^i_k)^\top R\mathsf u^i_k\right)\\
        \mbox{ subject to initial conditions and for all $i\in\mathbb I_{[0,p-1]}$}\\
      \begin{bmatrix}
             \mathsf{y}_{[0,N+n_x-1]}^i\\
             \mathsf{u}_{[0,N+n_x-1]}^i\\
            \hat{\mathsf{w}}_{[0,N+n_x-1]}^i
        \end{bmatrix}
        =
            \begin{bmatrix}
                \mathcal H_{N+n_x}\left(\mathbf{y}^\text{d}_{[0,T-1]}\right)\\
                \mathcal H_{N+n_x}\left(\mathbf{u}^\text{d}_{[0,T-1]}\right)\\
                \mathcal H_{N+n_x}\left(\mathbf{w}^\text{d}_{[0,T-1]}\right)
            \end{bmatrix}\mathsf g^i\\
            \pce{u}_{k}^{i'} = 0, \forall i' \in \I_{[p_\text{ini}+k(p_w-1),p-1]},\, \forall k \in \I_{[0,N+n_x-1]}
        \end{gathered}\]};
    
    \draw[line width=6pt,my double triangle, draw=lightblue](A) -- (B) node [above, midway] (n1) {\parbox[t]{1.2cm}{\centering exact and\\ finite series expansion}};
    \draw[line width=6pt,my double triangle, draw=none](A) -- (B) node [below, midway] (n2) {Lemma~\ref{lem:no_truncation_error}};

    \draw[line width=6pt,my double triangle, draw=lightblue](C) -- (D) node [above, midway] (n1) {\parbox[t]{1.2cm}{\centering exact and\\ finite series expansion}};
    \draw[line width=6pt,my double triangle, draw=none](C) -- (D) node [below, midway] (n2) {Lemma~\ref{lem:no_truncation_error}};

    \draw[line width=6pt,my double triangle, draw=lightblue]([xshift=-40pt]A.south) -- ([xshift=-40pt]C.north) node [right, midway] (n3) {\parbox[t]{4cm}{\centering stochastic fundamental lemma\\Lemma~\ref{lem:FL_stoch}}};

    \draw[line width=6pt,my double triangle, draw=lightblue]([xshift=40pt]B.south) -- ([xshift=40pt]D.north) node [left, midway] (n3) {\parbox[t]{4cm}{\centering stochastic fundamental lemma\\Lemma~\ref{lem:FL_stoch}}};
\end{tikzpicture}
    \end{center}
     \caption{Equivalence of the stochastic OCPs~\eqref{eq:OCPRV}, \eqref{eq:OCPdata}, \eqref{eq:OCPPCE}, and \eqref{eq:OCPdataPCE}.} \label{fig:OCPs}
    \end{figure*}
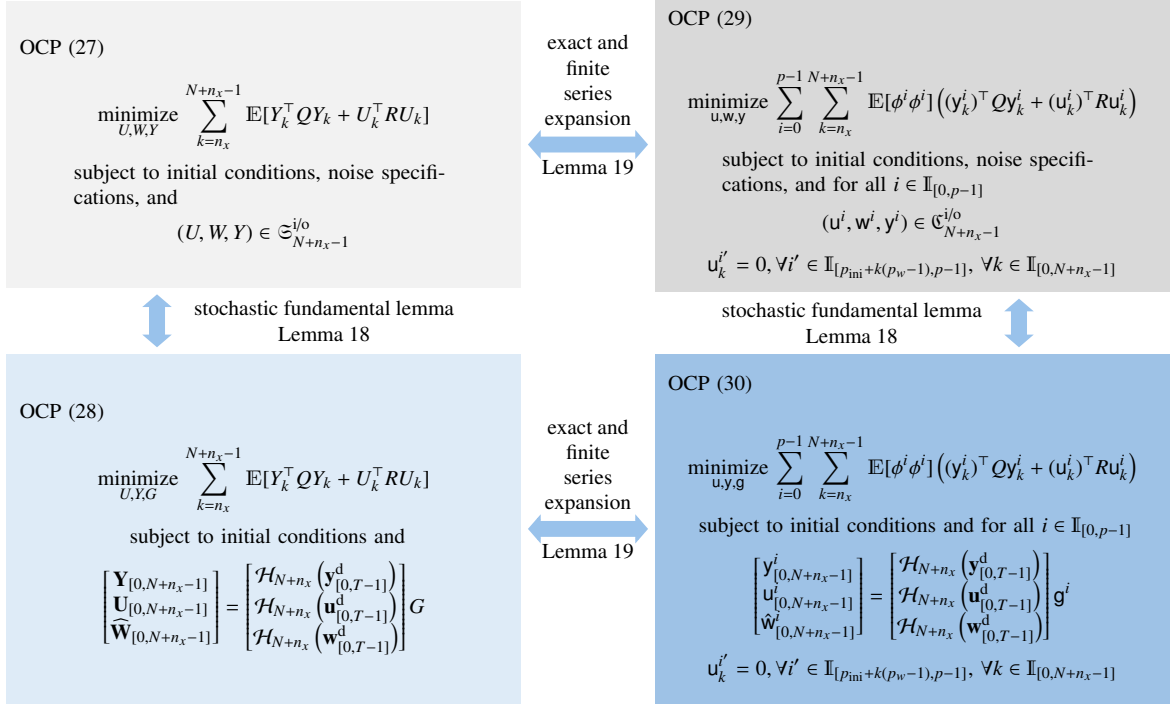

\begin{remark}[Exact PCE]
    Choosing the orthogonal basis $(\phi^i)_{i\in\mathbb N}$ in a smart way the order in the exact series expansion may be drastically reduced, which is beneficial from a numerical point of view. One approach in this direction is discussed in Subsection~\ref{sec:PCE}, where the PCE basis is selected according to the exogenous noise, cf.\ Table~\ref{tab:askey_scheme}. One drawback is that the PCE basis in general does not span the $L^2$-spaces associated with the $\sigma$-algebra $\mathcal F$, but a smaller $L^2$-space with respect to a sub-$\sigma$-algebra. Provided exactness in terms of the PCE basis, one can show similarly to  Lemma~\ref{lem:no_truncation_error} that the optimal solution $(U^\star, Y^\star, G^\star)$ of OCP~\eqref{eq:OCPdata} admits an exact series expansion with respect to the PCE basis.
\end{remark}
\noindent With regard to the previous remark we consider in the following a PCE basis $(\phi^i)_{i\in\mathbb N}$ neglecting the fact that this basis might not span the whole space $L^2((\Omega,\mathcal F,\prob), \mathbb R)$. 

Now, we reformulate the behavioral OCP~\eqref{eq:OCPRV} in the finite-horizon manifest PCE coefficient behavior $\mathfrak C^{\text{i/o}}_N$:
\begin{subequations}\label{eq:OCPPCE}
\begin{align}
    &\operatorname*{minimize}_{\mathbf{\mathsf u}, \mathbf{\mathsf w}, \mathbf{ \mathsf y} }  \,
    \sum_{i=0}^{p-1}\sum_{k=n_x}^{N+n_x-1} \mean[\phi^i \phi^i]\left((\mathsf y_k^i)^\top Q\mathsf y^i_k + (\mathsf u^i_k)^\top R\mathsf u^i_k\right) \\
    &\qquad \text{ subject to }\notag 
    \\ &\quad\quad({\mathsf u},{\mathsf w}, {\mathsf y}) \in\mathfrak C_{N+n_x-1}^\text{i/o} \label{eq:OCPPCE_behav},\\
            &\quad \quad \forall i \in \I_{[0,p-1]}\nonumber\\
&\qquad  \begin{bmatrix}
             \mathbf{ \mathsf y}^i_{[0, n_x-1]}\\
             \mathbf{ \mathsf u}^i_{[0,n_x-1]}\\ 
             \end{bmatrix} 
             = \begin{bmatrix}
             \mathbf{\hat {\mathsf y}}^i_{[0,n_x-1]}\\
             \mathbf{\hat {\mathsf u}}^i_{[0,n_x-1]} \\
         \end{bmatrix},  \label{eq:OCPPCE_consist_cond} \\
      &\qquad      \mathbf{\mathsf w}^i_{[0,N+n_x-1]} = \hat{\mathbf{\mathsf w}}^i_{[0,N+n_x-1]} \label{eq:OCPPCE_noise},\\
     & \qquad     	\pce{u}_{k}^{i'} = 0, \forall i'\in \I_{[p_\text{ini}+k(p_w-1),p-1]},\, \forall k \in \I_{[0,N+n_x-1]}. \label{eq:OCPPCE_causality}
     \end{align}
\end{subequations}
In comparison to OCP~\eqref{eq:OCPRV} which is formulated in the finite-horizon manifest behavior of expansion coefficients $\mathfrak S^{\text{i/o}}_N$, OCP~\eqref{eq:OCPPCE} is structurally similar in terms of the objective and the constraints \eqref{eq:OCPPCE_behav}--\eqref{eq:OCPPCE_noise}. That is to say,  \eqref{eq:OCPPCE_behav}--\eqref{eq:OCPPCE_noise} can be derived by applying Galerkin projections to their random-variable counterparts \eqref{eq:OCPRV_behav}--\eqref{eq:OCPRV_noise}. The main structural change occurs in \eqref{eq:OCPPCE_causality} as this constraint models the causality requirement of the filtered stochastic process $U$ in the applied orthogonal basis $(\phi^i)_{i\in\mathbb N}$. Note that in the behavioral OCP~\eqref{eq:OCPRV} in random variables this causality is implicitly handled in the choice of the filtration $(\mathcal G_k)_{k\in\mathbb N}$.

Similar to the change from OCP~\eqref{eq:OCPRV} to OCP~\eqref{eq:OCPdata} we now reformulate OCP~\eqref{eq:OCPPCE} in data-driven fashion:
\begin{subequations}\label{eq:OCPdataPCE}
\begin{align}
    &\operatorname*{minimize}_{\mathbf{\mathsf u},\mathbf{\mathsf y}, \mathbf{ \mathsf g} }  \,
    \sum_{i=0}^{p-1}\sum_{k=n_x}^{N+n_x-1} \mean[\phi^i \phi^i]\left((\mathsf y_k^i)^\top Q\mathsf y^i_k + (\mathsf u^i_k)^\top R\mathsf u^i_k\right)  \\
   &\qquad \text{ subject to } \forall i \in \I_{[0,p-1]}, \quad \nonumber\\
  &\qquad\begin{bmatrix}
         \mathbf{\mathsf y}^i_{[0,N+n_x-1]}\\
         \mathbf{\mathsf u}^i_{[0,N+n_x-1]}\\
        \hat{\mathbf{\mathsf w}}^i_{[0,N+n_x-1]}
    \end{bmatrix}
     =
     \begin{bmatrix}
        \mathcal H_{N+n_x}\left(\mathbf{y}^\text{d}_{[0,T-1]}\right)\\
        \mathcal H_{N+n_x}\left(\mathbf{u}^\text{d}_{[0,T-1]}\right)\\
        \mathcal H_{N+n_x}\left(\mathbf{w}^\text{d}_{[0,T-1]}\right)
    \end{bmatrix}
     \mathsf   g^i, \label{eq:OCPdataPCE_hankel}
     \\
     &\qquad  \begin{bmatrix}
        \mathbf{ \mathsf y}^i_{[0, n_x-1]}\\
        \mathbf{ \mathsf u}^i_{[0,n_x-1]}\\ 
        \end{bmatrix} 
        = \begin{bmatrix}
        \mathbf{\hat {\mathsf y}}^i_{[0,n_x-1]}\\
        \mathbf{\hat {\mathsf u}}^i_{[0,n_x-1]} \\
    \end{bmatrix},
      \label{eq:OCPdataPCE_consist_cond} \\
            	&\qquad\pce{u}_{k}^{i'} = 0, \forall i' \in \I_{[p_\text{ini}+k(p_w-1),p-1]},\, \forall k \in \I_{[0,N+n_x-1]}. \label{eq:OCPdataPCE_causality}
     \end{align}
\end{subequations}
We note that OCP~\eqref{eq:OCPdataPCE} gives a computationally tractable reformulation of OCP~\eqref{eq:OCPRV}. Specifically, in view of exact propagation (cf. Lemma~\ref{lem:no_truncation_error}), we emphasize the finite-dimen\-sional nature of OCP \eqref{eq:OCPdataPCE}. Indeed, \eqref{eq:OCPdataPCE} is an equality constrained quadratic program.

The entire process of reformulations and the equivalence relations between the four considered OCPs are summarized in Figure~\ref{fig:OCPs}.

\subsection{Constrained Formulations and Implementation Aspects}
Several comments  are in order: on the proposed data-driven OCPs, on how to extend their formulations with chance constraints, and on their numerical implementation. \vspace{2mm}

\noindent \textbf{Chance Constraints}.
So far the considered OCPs~\eqref{eq:OCPdata} and \eqref{eq:OCPdataPCE} involve equality constraints which model behavioral constraints, consistency conditions (a.k.a. initial conditions of the dynamics), and causality requirements. 
Yet, in many applications it is of interest to also model inequality constraints in a probabilistic/stochastic setting. \vspace{1mm}

Consider a scalar box constraint $z\in [\underline z, \bar z]$. Lifting $z\in \mbb{R}$ to a probability space, i.e. to  $Z \in L^2(\Omega, \mathbb R)$, the previous deterministic requires attention. There are three main options:

\textit{In-expectation constraint satisfaction}, i.e., one uses 
    \[\mean[Z] \in [\underline{z}, \bar z].\] This is a weak formulation in the sense that for arbitrary $ L^2(\Omega, \mathbb R)$ random variables, it might happen that 
    \[\forall\omega \in \Omega: Z(\omega) \not\in [\underline{z}, \bar z]\]
    while $\mean[Z] \in [\underline{z}, \bar z]$. In this case, the in-expectation satisfaction of the constraint might lead to erroneous conclusions.
   
\textit{Robust constraint satisfaction}, i.e., one imposes that
\[
    Z(\omega) \in [\underline{z}, \bar z] \qquad\forall\,\omega \in \Omega.
\]
In this setting the worst case outcome $\omega \in \Omega$ will likely dictate whether or not the constraint can be satisfied with certainty. Moreover, observe that in case of random variables with unbounded set of outcomes $\Omega$ (e.g. Gaussian) such a constraint can never be satisfied with certainty.

\textit{Probabilistic constraint satisfaction}, i.e., one requires the constraint to hold in probability
     \[\mbb{P}\left[Z \in [\underline{z}, \bar z]\right] \geq 1 - \varepsilon,\]
     whereby the parameter $\varepsilon \in [0,1]$ and $1 - \varepsilon$ is usually denoted as confidence level. Constraints of this type are referred to as \textit{chance constraints}. In case $\varepsilon = 0$ holds, we say the constraint is satisfied \textit{almost surely}.

In many applications chance constraints are of tremendous interest as, in particular in context of optimization problems, they allow to trade-off performance against constraint satisfaction, see \cite{Mesbah16a,Bienstock14,heirung2018stochastic,Farina16a} for tutorial introductions. We also observe that there is a subtle difference between robust and almost surely constraint satisfaction, as the later allows for constraint violation on subset of $\Omega$ with measure zero.

Naturally, this raises the question of how to formulate chance constraints in a numerically tractable fashion in the PCE framework. In stochastic MPC a common reformulation of scalar chance constraints is
\[
\mbb{E}[Z] \pm \sigma(\varepsilon)\sqrt{\operatorname{Cov}[Z,Z]}\in [\underline{z}, \bar z],   
\]
cf. \cite{Farina16a}.
A conservative choice for $ \sigma$ is given by $\sigma(\varepsilon) = \sqrt{\frac{2-\varepsilon}{\varepsilon}}$. In case of Gaussian distributions one can also choose $\varepsilon$ according to the standard normal table to avoid conservatism.

Let the series expansion of $Z$ be given as $Z = \sum_{i=0}^{p_Z-1} \mathsf z^i\phi^i$. Using \eqref{eq:moments_PCE} we obtain
\[
\mathsf z^0 \pm \sigma(\varepsilon)\sqrt{\sum_{i=1}^{p_Z-1} (z^i)^2\mean[\phi^i \phi^i]}\in [\underline{z}, \bar z].   
\]
It is straight-forward to see that this reformulation directly leads to second-order cone constraints. For further details on the reformulation of chance constraints we refer to \cite{Farina16a,calafiore06}.
\vspace{2mm}

\noindent \textbf{Initial Conditions}. 
In the OCPs~\eqref{eq:OCPRV} and \eqref{eq:OCPdata} we have phrased the initial conditions \eqref{eq:OCPRV_consist_cond} and \eqref{eq:OCPdata_consist_cond} in terms of random variables. This setting entails the (more) common situation of deterministic initial conditions obtained from measurements as a special case. Moreover, it allows to  model additive measurement noise in the PCE framework by
\[
\widehat Y_k = y_k + M_k
\]
with  $M_k \in  L^2(\Omega, \mathbb R^{n_y})$. Suppose that a finite PCE for $M_k$ is known, and that $M_k$ has zero mean, then the PCE for $\widehat Y_k$ is immediately obtained. We emphasize that, for deterministic initial data, the PCE formulation of the consistency conditions~\eqref{eq:OCPPCE_consist_cond} and~\eqref{eq:OCPdataPCE_consist_cond} is directly able to handle this. All it takes is to set the PCE coefficients $\hat y_k^i = 0$ for $i>0$. 

Moreover, it is well-known that noise corrupted data in the Hankel matrices and in the consistency constraints might lead to infeasibility or to deficient numerical solution properties of the Hankel matrix constraints~\eqref{eq:OCPdata_hankel} and \eqref{eq:OCPdataPCE_hankel}. A common remedy is to add  slack vectors $\sigma^i$ of appropriate dimension
\begin{equation} \label{eq:IniSlack}
\begin{bmatrix}
             \mathbf{ \mathsf y}^i_{[0, n_x-1]}\\
             \mathbf{ \mathsf u}^i_{[0,n_x-1]}\\ 
             \end{bmatrix} 
             = \begin{bmatrix}
             \mathbf{\hat {\mathsf y}}^i_{[0,n_x-1]}\\
             \mathbf{\hat {\mathsf u}}^i_{[0,n_x-1]} \\
         \end{bmatrix} + \begin{bmatrix}
                      \sigma^i \\ 0 
         \end{bmatrix}
\end{equation}
and to penalize them in the objective. Analysis on the implication of different penalization strategies has been done by, e.g., \cite{coulson2019regularized,yin2021data}. \vspace{2mm}

\noindent \textbf{Numerical Implementation and Toolboxes}. 
The comment on slack variables above has already addressed aspects of numerical implementation. However, this subject warrants further discussion. 

For starters, observe that the usual Hankel matrix equality constraint---\eqref{eq:OCPdata_hankel} and \eqref{eq:OCPdataPCE_hankel}---entails large dense matrices. This as such is numerically not beneficial. This is evident upon comparison to model-based linear quadratic OCP formulations with inequality constraints in which the state-recursion typically results in sparse equality constraints of favourable structure~\citep{Axehill15}. 

It is known that Hankel matrices can also be constructed from segmented data~\citep{WDPCT20}. From a numerical perspective, it even more promising to segment the time horizon, i.e., to use Hankel matrices of smaller dimension and to couple the solution pieces by continuity constraints. This idea has been suggested by \cite{ODwyer21}. It resembles the classic concept of multiple shooting in the data-driven setting~\citep{Bock84a}. 
In a recent paper, it is shown that data-driven multiple shooting can be applied to the stochastic setting of OCP~\eqref{eq:OCPdataPCE}. Specifically, one can combine the multiple shooting idea with moment matching. This way the dimension of the PCE basis, and thus the number of decision variables can be reduced considerably. We refer to \cite{tudo:ou23a} for further details. 

Another aspect which requires attention is the computation of the numerous quadratures needed to evaluate $\mean[\phi^i\phi^i]$ and to perform Galerkin projection of nonlinear equations. Fortunately, there exists a number of efficient numerical packages which can be used. This includes tools for \textsf{Matlab}~\citep{petzke2020pocet}, \textsf{julia} \citep{tudo:muehlpfordt20c}, and \textsf{python}~\citep{feinberg2015chaospy,baudin2017openturns}.
\section{Extension to Descriptor Systems}\label{sec:ext}

From a behavioral perspective algebraic equality constraints in linear systems are related to the chosen representation (i.e. the chosen state space) and they can be avoided through a procedure which permutes input and output variables~\citep{willems86i,willems07}. However, from an engineering perspective algebraic constraints and descriptor structures arise from modeling choices \citep{Kunkel06,Biegler12,campbell2019applications} while often application requirements assign inputs and outputs.

A prominent example of this crux are electrical power systems wherein the generator powers are the control inputs, while the underlying electrical grid induces algebraic constraints~\citep{Gross16,milano13}. At the same time, stochastic uncertainty surrounding renewables is a key challenge in energy systems, see~\citep{Bienstock14,milano13} for electric systems and~\citep{zavala2014stochastic} for gas networks.
Moreover, data-driven approaches to power and energy systems are of increasing interest, see, e.g., ~\citep{Huang21decentralized,Cremer2018data,Schmitz22,venzke2021efficient, BILGIC2022}.

Hence, this section investigates the implications of linear descriptor structures on data-driven stochastic optimal control. 
We first recall results on data-driven approaches to descriptor systems, before we extend the investigations to the stochastic descriptor setting. We commence by recapitulating the model-based analysis of regular descriptor systems via the quasi-Weierstraß form.

\subsection{The Quasi-Weierstraß Form}
\label{sec:qWeier}

We consider discrete-time LTI \emph{descriptor} system given by
\begin{subequations}\label{sys_dae}
    \begin{align}
        E x_{k+1} &= A x_k + B u_k \label{sys_dae_a} \\
        y_k &= Cx_k + D u_k \label{sys_dae_b}
        \end{align}
\end{subequations}
with the descriptor matrix $E \in \mathbb R^{n_x\times n_x}$, $\operatorname{rk}(E)<n_x$. Descriptor representations  allow to explicitly model algebraic constraints.  They arise, e.g., from discretization of differential-algebraic systems or  from systems with separated time-scales. If $E$ is an invertible matrix, system~\eqref{sys_dae} can straightforwardly be written as in explicit form~\eqref{sys}. 

Henceforth, we assume that the matrix pencil $\lambda E-A$ is \emph{regular}, i.e., $\det (\lambda E-A)\neq 0$ holds for some $\lambda\in \mathbb C$. In this case there exist invertible matrices $P$,~$S\in\mathbb R^{n_x\times n_x}$ such that the pencil $\lambda E-A$ can be transformed into the quasi-Weierstraß form
\begin{equation}\label{qweier}
    S(\lambda E-A)P = \lambda\begin{bmatrix}
        I_{n_J} & \\ & N
    \end{bmatrix} - \begin{bmatrix}
        J & \\ & I_{n_N}
    \end{bmatrix},
\end{equation}
where $N\in\mathbb R^{n_N\times n_N}$ is a nilpotent matrix and $J\in\mathbb R^{n_J\times n_J}$ with $n_J+n_N=n_x$, cf.\ \cite{BergerIlchmannTrenn12,Dai89,Kunkel06}. Whenever $E$ is invertible, then $n_N=0$ and $N$ is an empty matrix.

Due to regularity of the underlying matrix pencil, the system representation~\eqref{sys_dae} can be transformed into the equivalent one
\begin{subequations}\label{qsys}
    \begin{align}\label{qsysa}
            \begin{bmatrix}
                I_{n_J} & \\ & N
            \end{bmatrix} \begin{bmatrix}
                z_{k+1}^J \\ z_{k+1}^N
            \end{bmatrix} &= \begin{bmatrix}
                J & \\ & I_{n_N} 
            \end{bmatrix} 
            \begin{bmatrix}
                z_{k}^J \\ z_{k}^N
            \end{bmatrix}
            + \begin{bmatrix}
                B_J \\ B_N
            \end{bmatrix} u_k\\
            \label{qsysb}
            y_k &= \begin{bmatrix}
                C_J & C_N
            \end{bmatrix}
            \begin{bmatrix}
                z_{k}^J \\ z_{k}^N
            \end{bmatrix} + Du_k,
    \end{align}
\end{subequations}
which we refer to as \emph{quasi-Weierstraß form} and where
\begin{equation}
        \label{transform}
        SB = \begin{bmatrix}
            B_J \\ B_N
        \end{bmatrix},\quad CP = \begin{bmatrix}
            C_J & C_N
        \end{bmatrix},\quad P^{-1}x_k = z_k = \begin{bmatrix}
            z^J_k\\ z^N_k
        \end{bmatrix}.
\end{equation}
Note that the dynamics~\eqref{qsysa} are decoupled. They consists of the \emph{dynamic} part for $z^J_k$ and the \emph{algebraic} part for $z^N_k$.
        The state evolution for the quasi-Weierstraß form~\eqref{qsys} is given by
    \begin{equation}
        \label{evolution}
        z_k^J = J^k z_0^J + \sum_{i=1}^k J^{k-i} B_J u_{i-1},\quad z_k^N = - \sum_{i=0}^{\delta-1} N^i B_N u_{k+i},
\end{equation}
cf.~\cite{BelovAndrianovaKurdyukov18}, where $\delta$ is the structured nilpotency index as defined in  Definition~\ref{def:StructuredNilpotencyIndex} below. 

The solution~\eqref{evolution} indicates that descriptor systems~\eqref{sys_dae} may be considered as non-causal, i.e., the present state is influenced by input actions at subsequent future time instances. Alternatively, one may regard the choice of future input actions as constrained by the present value of the state. Commonly, the \emph{nilpotency index} $\hat\delta$ of the matrix $N$, i.e.\ $N^{\hat \delta}=0$ and $N^{\hat\delta-1}\neq 0$, is used~\citep{BelovAndrianovaKurdyukov18}. This can be further improved by using the following, slightly different definition, which fosters our further analysis of system~\eqref{sys_dae}.
    \begin{definition}[Structured nilpotency index]\label{def:StructuredNilpotencyIndex}
        The number
    \begin{equation}
    \label{inputindex}
        \delta \doteq \begin{cases} \min\{i\in\mathbb N\,|\, N^iB_N=0\} &\text{if }n_N>0 \text{ and }B_N\neq 0,\\
        1 & \text{otherwise}
        \end{cases}
    \end{equation}
    is called the \emph{structured nilpotency index}~$\delta$ of system~\eqref{sys}.
\end{definition}
    
As we will see below, in the light of data-driven control, the structured nilpotency index should be preferred over its upper bound given by the nilpotency index~$\hat \delta$, since it leads to tighter estimates on the data requirements. We further remark that, although the quasi-Weierstraß form~\eqref{qweier} is not unique, the structured nilpotency index~$\delta$, the nilpotency index $\hat \delta$, and the dimensions $n_J$, $n_N$ are uniquely determined. In particular, they do not depend on the choice of the transformation matrices~$S$ and~$P$, cf. \cite[Lemma~2.10]{Kunkel06}. Moreover, we mention the close conceptual relation between the structured nilpotency index and the input index for continuous-time descriptor system introduced by \cite{ilchmann2018model, IlchLebe19}. 

    \begin{example}[Structured nilpotency index $<$ nilpotency index]
        Consider a system given in quasi-Weierstraß form where
        \begin{equation}
        \nonumber
            N=\begin{bmatrix}
                0&1 & 0 &0\\
                0&0&1& 0\\
                0&0&0&1\\
                0&0&0&0
            \end{bmatrix},\quad B_N=\begin{bmatrix}
                1&0\\0&1\\0&0\\0&0
            \end{bmatrix}.
        \end{equation}
        The nilpotency index of $N$ is $\hat\delta=4$. The columns of $B_N$ stand orthogonal on the rows of $N^2$, while $N B_N\neq 0$. Therefore, the structured nilpotency index is $\delta=2$.
    \end{example}
    
From \eqref{evolution} one immediately observes that in the case $N B_N\neq 0$, i.e.\ $\delta\geq 2$, systems~\eqref{qsys} and \eqref{sys_dae} are non-causal, i.e., the value of the state at time~$k$ depends
on the input signal until time $k+\delta-1$. Further, the representation~\eqref{evolution} allows to describe the set of \emph{consistent initial values}
    \begin{equation}
        \label{consistentInit}
        \begin{split}
        \mathcal X^0 &\doteq \left\{P\begin{bmatrix}
            \xi^J\\ \xi^N
        \end{bmatrix} \,\middle |\, \begin{gathered}\xi^J\in\mathbb R^{n_J}, \xi^N = \sum_{i=0}^{\delta-1} N^i B_N u_{i}\\ \text{for some }u_0,\dots,u_{\delta-1}\in\mathbb R^{n_u} \end{gathered}\right\}\\
        \end{split},
    \end{equation}
    which is the column span
    of the matrix
    \[P\begin{bmatrix}
            I & 0\\
            0 & \begin{bmatrix}
                B_N&\dots & N^{\delta-1}B_N
            \end{bmatrix}
        \end{bmatrix}\in\mathbb R^{n_x\times (n_J+n_u\delta)},\]
        cf.\ \cite{BelovAndrianovaKurdyukov18}.
    Next, we recall controllability and observability concepts for descriptor systems as introduced by~\cite{Dai89} using the equivalent characterizations in accordance to~\cite{BelovAndrianovaKurdyukov18,Stykel02}.

\begin{definition}[R-controllability and R-observability]~\\
    System~\eqref{sys_dae} is said to be \emph{R-controllable}, if
    \begin{equation}
        \operatorname{rk}\left(\begin{bmatrix}
                \lambda E-A & B
            \end{bmatrix}\right) = n_x\quad\text{for all }\lambda\in\mathbb C. 
        \end{equation}
        Further, system~\eqref{sys_dae} is said to be \emph{R-observable}, if
        \begin{equation}
            \operatorname{rk}\left(\begin{bmatrix}
                \lambda E-A\\
                C
            \end{bmatrix}\right) = n_x\quad\text{for all }\lambda\in\mathbb C. 
        \end{equation}
          \end{definition}
        \begin{remark}[Controllability/observability in the quasi-Weierstraß form] \label{rem:controllability/observability_weierstrass}
            R-controllability can be equivalently characterized by the Kalman-like criterion for the quasi-Weierstraß form~\eqref{qweier},
            \begin{subequations}
            \begin{equation}
                \label{kalmanctrl}
                \operatorname{rk}\left(\begin{bmatrix}
                    B_J & JB_J & \dots & J^{n_J-1} B_J
                \end{bmatrix}\right) = n_J.
            \end{equation}
    Similarly, system~\eqref{sys_dae} is R-observable if and only if
            \begin{equation}
                \label{kalmanobsv}
                \operatorname{rk}\left(\begin{bmatrix}
                    C_J \\ C_JJ \\ \vdots \\ C_JJ^{n_J-1}
                \end{bmatrix}\right) = n_J.
            \end{equation}                            
            \end{subequations}
        \end{remark}

\begin{remark}[Equivalent explicit LTI representation]\label{rem_causal}
    It was shown by \cite{willems86i} that the manifest behavior $\mathfrak B_\infty^{i/o}$ emerging from a descriptor system~\eqref{sys_dae} can be always represented in terms of an explicit LTI system~\eqref{sys}. This equivalent representation comes at the expense of increasing the state dimension. The key is to swap the role of certain input and output components in order to resolve the non-causality raised by the descriptor representation. To quote \cite{willems86i}: \emph{causality is a matter of representation [...] one can always obtain causality by properly interpreting the variables}. This reinterpretation of inputs and outputs is completely natural from a behavioral point of view. Yet, it may debatable if the \textit{choice of specific input and output variables} is determined by application requirements. A constructive argument tailored to regular descriptor systems regarding the above equivalence is given in the appendix.
\end{remark}

The next lemma, which tightens the results of \cite[Lemma~1]{SchmitzFaulwasserWorthmann22} by using the structured nilpotency index~$\delta$, states a lower bound on the length of input-output trajectories such that the state $x$ is uniquely determined.
\begin{lemma}[Uniqueness of state trajectories]
    \label{state_align}
    Suppose that the system represented by~\eqref{sys} is R-observable. If $(x,u,y)$, $(\tilde x, u, \tilde y)\in\mathfrak B_{n_J-2+\delta+T}$ for some $T\in\mathbb N$ such that $y|_{[0,n_J-1]}=\tilde y|_{[0,n_J-1]}$, then 
    \[
    x|_{[0,n_J-1+T]}=\tilde x|_{[0,n_J-1+T]}.
    \]
\end{lemma}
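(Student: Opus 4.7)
The plan is to exploit the decoupled structure provided by the quasi-Weierstraß form in \eqref{qweier}--\eqref{qsys}. Applying the coordinate change $z_k = P^{-1} x_k$ from \eqref{transform} to both trajectories gives two solutions $(z,u,y)$ and $(\tilde z,u,\tilde y)$ of \eqref{qsys} on the same horizon $[0,n_J-2+\delta+T]$, and by the linearity of \eqref{transform} it suffices to show that $z|_{[0,n_J-1+T]}=\tilde z|_{[0,n_J-1+T]}$.

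First I would handle the algebraic part. From the explicit solution formula \eqref{evolution} one has, for every admissible time index $k$,
\begin{equation*}
    z_k^N - \tilde z_k^N \;=\; -\sum_{i=0}^{\delta-1} N^i B_N\bigl(u_{k+i}-u_{k+i}\bigr) \;=\; 0,
\end{equation*}
so $z^N$ is uniquely determined by $u$ alone. The formula for $z_k^N$ requires $u_{k+\delta-1}$ to be available, i.e.\ $k+\delta-1 \le n_J-2+\delta+T$, which is equivalent to $k\le n_J-1+T$. Hence $z^N|_{[0,n_J-1+T]} = \tilde z^N|_{[0,n_J-1+T]}$, and this is precisely where the \emph{structured} nilpotency index $\delta$ (as opposed to the plain nilpotency index $\hat\delta$) gives the tightest bound on the trajectory length required by the statement.

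For the dynamic part, set $\Delta z^J_k \doteq z^J_k - \tilde z^J_k$. The first block row of \eqref{qsysa} is an explicit linear recursion, so subtracting the two copies (which share the same input~$u$) yields $\Delta z^J_{k+1} = J\, \Delta z^J_k$ and therefore $\Delta z^J_k = J^k \Delta z^J_0$ for all $k\in\mathbb I_{[0,n_J-1+T]}$. Combining with the just-established $\Delta z^N \equiv 0$ and the output equation \eqref{qsysb}, the hypothesis $y|_{[0,n_J-1]} = \tilde y|_{[0,n_J-1]}$ becomes
\begin{equation*}
    0 \;=\; y_k - \tilde y_k \;=\; C_J J^k \Delta z^J_0, \qquad k\in\mathbb I_{[0,n_J-1]}.
\end{equation*}

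Stacking these $n_J$ equations produces the observability matrix acting on $\Delta z^J_0$, and by the R-observability characterization \eqref{kalmanobsv} this matrix has full column rank $n_J$. Thus $\Delta z^J_0 = 0$, which propagates to $\Delta z^J_k = 0$ for every $k$ in the considered range, so $z^J|_{[0,n_J-1+T]} = \tilde z^J|_{[0,n_J-1+T]}$. Together with the algebraic identity $z^N = \tilde z^N$ on the same interval, this gives $z|_{[0,n_J-1+T]} = \tilde z|_{[0,n_J-1+T]}$, hence $x|_{[0,n_J-1+T]} = \tilde x|_{[0,n_J-1+T]}$ after undoing the transformation $x = P z$. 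The only delicate point in the argument is the index bookkeeping around $\delta$: one has to verify that exactly $n_J-2+\delta+T$ data points suffice to both evaluate $z^N_{n_J-1+T}$ and to collect the $n_J$ output samples needed for observability, and this is the part where using the structured nilpotency index in place of $\hat\delta$ is essential.
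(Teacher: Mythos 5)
Your proof is correct and takes essentially the same route as the paper's: pass to the quasi-Weierstraß form, use the explicit solution formula \eqref{evolution} to see that the algebraic part $z^N$ is fixed by the shared input and that the difference of the dynamic parts satisfies $z^J_k-\tilde z^J_k=J^k(z^J_0-\tilde z^J_0)$, then invoke the Kalman-type R-observability criterion \eqref{kalmanobsv} to conclude $z^J_0=\tilde z^J_0$. Your version merely spells out the index bookkeeping around $\delta$ that the paper leaves implicit.
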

\begin{proof}
    Let $z=P^{-1}x$ be the state variable of the quasi-Weier\-straß form~\eqref{qweier} given in \eqref{transform} with its components $z^J$ and $z^N$; similar for $\tilde z=P^{-1} \tilde x$. As the inputs of both trajectories $(x,u,y)$ and $(\tilde x,u,\tilde y)$ coincide until time $n_J-2+\delta$ and the outputs coincide until time $n_J-1$ one has by \eqref{qsysb} and~\eqref{evolution}
    \begin{equation*}
        0=y_k - \tilde y_k = C_J J^k (z_0^J -\tilde z_0^J )
    \end{equation*}
    for all $k=0,\dots, n_J-1$.  R-observability, i.e.\ \eqref{kalmanobsv}, implies $z_0^J=\tilde z_0^J$. According to \eqref{evolution} this yields $z^J|_{[0,n_J-1+T]}=\tilde z^J|_{[0,n_J-1+T]}$ and $z^N|_{[0,n_J-1+T]}=\tilde z^N|_{[0,n_J-1+T]}$. The assertion follows with the transformation $x=Pz$ and $\tilde x=P\tilde z$. 
\end{proof}

\subsection{Comments on the LTI Behavior}

In addition, regularity of the descriptor representation~\eqref{sys_dae} is related to the behavioral concept of autonomy introduced in \cite[Section~3.2]{PoldermanWillmes98}, i.e.,  the property of a system that the past of a trajectory completely determines its future. More precisely, regularity implies that the system with zero input is autonomous, i.e.,\ for all $(x,0,y), (\tilde x,0,\tilde y)\in\mathfrak B_\infty$ with $x(0)=\tilde x(0)$ one has $(x,0,y)= (\tilde x,0,\tilde y)$, cf.~\citep[Corollary~5.2]{BergerReis13}.

Further, we like to point out that the bound on the \emph{transition delay~$T'$}, which appears in the behavioral definition of controllability (Definition~\ref{def:behav_controlability}), can be tightened for the system representation~\eqref{sys_dae}.
\begin{lemma}[Controllability of the behavior]
    \label{ctrlRbehav}
    The behavior $\mathfrak B_\infty$ is controllable if and only if the system representation~\eqref{sys_dae} is R-controllable. In this case the transition delay can be chosen as $T'= \delta-1+n_J$ independently from the particular trajectories.
\end{lemma}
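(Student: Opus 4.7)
The plan is to work in the quasi-Weierstraß coordinates $z = P^{-1}x = (z^J, z^N)$ of~\eqref{qsys}, exploiting the explicit solution formula~\eqref{evolution}. Since Remark~\ref{rem:controllability/observability_weierstrass} already identifies R-controllability of~\eqref{sys_dae} with the Kalman rank condition~\eqref{kalmanctrl} on $(J, B_J)$, it suffices to relate the latter to behavioral controllability of $\mathfrak B_\infty$ and simultaneously to establish the tight transition delay $T' = n_J + \delta - 1$.

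For the \emph{sufficiency direction}, I fix trajectories $b = (x, u, y)$, $\tilde b = (\tilde x, \tilde u, \tilde y) \in \mathfrak B_\infty$ and $T \in \mathbb Z_+$. With $T' = n_J + \delta - 1$ and initialization $z'^J_0 = z^J_0$, I construct $b' = (x', u', y')$ by specifying its input on three regions:
\begin{itemize}
\item[(a)] $u'_k = u_k$ for $k \in \I_{[0, T + \delta - 2]}$;
\item[(b)] $u'_k = \tilde u_{k - (T+T')}$ for $k \geq T + T'$;
\item[(c)] the intermediate inputs $u'_k$ on $\I_{[T + \delta - 1, T + T' - 1]}$ are chosen via controllability of $(J, B_J)$ so as to steer $z'^J$ from $z^J_{T+\delta-1}$ to $\tilde z^J_0$ in exactly $T' - \delta + 1 = n_J$ steps.
\end{itemize}
Induction on~\eqref{qsysa} together with (a) and $z'^J_0 = z^J_0$ yields $z'^J_k = z^J_k$ for $k \in \I_{[0, T + \delta - 1]}$, while (b) ensures $z'^J_k = \tilde z^J_{k - (T + T')}$ for $k \geq T + T'$. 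The algebraic part follows from~\eqref{evolution}: each $z'^N_k$ depends only on the inputs in the window $\I_{[k, k + \delta - 1]}$, which lies in (a) for $k \in \I_{[0, T - 1]}$ and in (b) for $k \geq T + T'$. Hence $z'^N$ agrees with $z^N$ on $\I_{[0, T - 1]}$ and with the shift of $\tilde z^N$ from time $T + T'$ onwards. Setting $x' = P z'$ and computing $y'$ via~\eqref{qsysb} then produces $b' \in \mathfrak B_\infty$ with the required matching, so behavioral controllability holds with delay $T'$.

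For the \emph{necessity direction}, I argue by contraposition: assuming $(J, B_J)$ fails the Kalman rank test, the Popov--Belevitch--Hautus test provides $0 \neq \eta$ and $\lambda$ with $\eta^\top J = \lambda \eta^\top$ and $\eta^\top B_J = 0$. Along every trajectory of~\eqref{qsys}, the scalar $\eta^\top z^J_k$ evolves autonomously as $\eta^\top z^J_{k + 1} = \lambda \eta^\top z^J_k$, independent of the input. Selecting $b, \tilde b \in \mathfrak B_\infty$ with $\eta^\top z^J_0 = 0$ and $\eta^\top \tilde z^J_0 \neq 0$ (both feasible since $z^J_0$ is a free parameter), any candidate transition $b'$ matching $b$ at $k = 0$ satisfies $\eta^\top z'^J_k = \lambda^k \cdot 0 = 0$ for all $k$, contradicting $\eta^\top z'^J_{T + T'} = \eta^\top \tilde z^J_0 \neq 0$. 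Hence controllability of $\mathfrak B_\infty$ forces $(J, B_J)$ to be controllable, and therefore~\eqref{sys_dae} to be R-controllable.

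The principal technical delicacy is the interaction of the algebraic part~\eqref{evolution} with the time-window structure: because $z^N_k$ couples $\delta$ consecutive inputs, prescribing $b|_{[0, T - 1]}$ implicitly fixes the input on $\I_{[0, T + \delta - 2]}$, shortening the usable steering window of region~(c) and forcing the additive summand $\delta - 1$ in $T'$. Verifying the compatibility of the three input regions at the junctions---i.e.\ the correct transition of $z^J$ from $z^J_{T + \delta - 1}$ to $\tilde z^J_0$, together with the self-consistency of $z^N$ at the boundary $k = T + T'$---is where the careful bookkeeping will be required in the formal proof.
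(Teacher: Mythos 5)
Your proposal is correct, and its constructive half coincides with the paper's: the paper also fixes $u'_k=u_k$ up to time $T+\delta-2$, copies the (shifted) input of $\tilde b$ from time $T+T'$ onwards, and uses the Kalman condition \eqref{kalmanctrl} to steer $z'^J$ to $\tilde z^J_0$ over the remaining $n_J$ steps (via an explicit pseudo-inverse formula), with the algebraic part handled exactly as you describe through the $\delta$-step input window in \eqref{evolution}. Where you genuinely diverge is the equivalence ``$\mathfrak B_\infty$ controllable $\Leftrightarrow$ R-controllable'': the paper establishes this abstractly by writing $\mathfrak B_\infty$ as the kernel of a polynomial matrix $R(\lambda)$ and invoking the known constant-rank criterion for behavioral controllability of kernel representations, which reduces immediately to $\operatorname{rk}\bigl(\begin{bmatrix}\lambda E-A & B\end{bmatrix}\bigr)=n_x$. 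You instead prove necessity by contraposition through the Popov--Belevitch--Hautus test on $(J,B_J)$, exhibiting an autonomously evolving scalar $\eta^\top z^J_k=\lambda^k\eta^\top z^J_0$ that obstructs any transition between suitably chosen initial conditions. Your route is more elementary and self-contained (it does not import the kernel-representation machinery, and your sufficiency construction does double duty by simultaneously delivering the delay bound), at the price of a slightly longer argument; the paper's route buys a one-line equivalence but must then prove the delay bound separately. Two small points to tidy in a formal write-up: the PBH eigenvector $\eta$ may be complex for a real pair $(J,B_J)$, so you should either work with $\eta^*$ and take real and imaginary parts at the end, or note that the conclusion $\eta^\top z'^J_{T+T'}=0\neq\eta^\top\tilde z^J_0$ survives this; and your closing claim that prescribing $b'|_{[0,T-1]}=b|_{[0,T-1]}$ ``implicitly fixes'' $u'$ on $\I_{[T,T+\delta-2]}$ is a mild overstatement---it only constrains the combination $\sum_{i=0}^{\delta-1}N^iB_Nu'_{T-1+i}$, though setting $u'_k=u_k$ there is the natural way to satisfy it and is exactly what the paper does.
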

\begin{proof}
    The proof is based on the observation that the behavior $\mathfrak  B_\infty$ admits a kernel representation. Specifically, $(x,u,y)\in\mathfrak B_\infty$ if and only if 
    \begin{equation} \label{eq:detKernelBehav}
    R(\sigma)\begin{bmatrix}
        x^\top & u^\top & y^\top
    \end{bmatrix}^\top = 0,\end{equation}
    where $R$ is the polynomial matrix,
    \begin{equation*}
        R(\lambda)=\begin{bmatrix}E & 0 & 0\\ 0 & 0 & 0\end{bmatrix}\lambda+\begin{bmatrix}
            -A & -B & 0\\
            C &  D & -I
        \end{bmatrix}\in\mathbb R^{(n_x+n_y)\times (n_x+n_u+n_y)}[\lambda],
    \end{equation*}
     and $\sigma$ denotes the left shift operator on $(\mathbb R^{n_x+n_u+n_y})^{\mathbb N}$, i.e.\ $(\sigma f)_k=f_{k+1}$. Controllability of $\mathfrak B_\infty$ is equivalently characterized via the criterion that $R(\lambda)$ has constant rank for all $\lambda\in\mathbb C$, see \cite{MarkovskyWillemsHuffelDeMoor06}. That is
    \begin{equation*}
        n_x+n_y=\operatorname{rk}(R(\lambda)) =
        \operatorname{rk}\left(\begin{bmatrix}
            \lambda E-A & B
        \end{bmatrix}\right) +n_y
    \end{equation*}
    for all $\lambda\in\mathbb C$. The latter condition is, in turn, equivalent to R-controllability of~\eqref{sys_dae}.

    It remains to show that the transition delay does not depend on the particular trajectories. Let $b=(x,u,y)$,~$\tilde b=(\tilde x, \tilde u,\tilde y)\in\mathfrak B_\infty$ and set $z=P^{-1}x$, $\tilde z =P^{-1}\tilde x$, where $P$ is the transformation matrix used to obtain~\eqref{qweier}. Furthermore, fix $T\in\mathbb Z_+$ and set $T' =\delta-1 + n_J$. We choose
\begin{equation}
    \label{baum}
    u'_k=u_k\text{ for }k\leq T-1+\delta-1,\quad u'_k=\tilde u_k\text{ for }k\geq T+T'.
\end{equation}
Since the system representation is R-controllable, that is \eqref{kalmanctrl} holds, we find values $u'_k$ for $T+\delta-1\leq k\leq T+T'-1$ such that
\begin{equation}
    \label{pilz}
    J^{T+T'} z_{0}^J + \sum_{i=1}^{T+T'} J^{T+T'-i} B_J u'_{i-1} = \tilde z^J_{0}.
\end{equation}
A solution of \eqref{baum} is, e.g.,\ given via the pseudo-inverse,
\begin{equation}
    \label{wald}
\begin{gathered}
    \begin{bmatrix}
        u'_{T+T'-1} \\ \vdots \\ u'_{T+\delta-1}
    \end{bmatrix}
    =  \\ \begin{bmatrix}B_J \dots J^{n_J-1} B_J\end{bmatrix}^\dagger\left(\tilde z^J_{0} - J^{T+T'} z_{0}^J - \sum_{i=1}^{T+\delta-1} J^{T+T'-i} B_J u'_{i-1}\right).
    \end{gathered}
\end{equation}
Let $z' \in(\mathbb R^{n_x})^{\mathbb N}$ with $z'^J_0 = z^J_0$ evolve according to the state evolution of~\eqref{qsys}, cf.\ \eqref{evolution}, subject to the input $u'$. Moreover, choose $y'\in(\mathbb R^{n_y})^{\mathbb N}$ with $y'_k = C_J z'^J_k + C_N z'^N_k + D u'_k$. Then $b'=(Pz',u',y')\in\mathfrak B_\infty$ and \eqref{intermediate} holds by construction.
\end{proof}

\begin{remark}[Controllability of the manifest behavior]~\\
    Regarding the manifest behavior~$\mathfrak B_\infty^\text{i/o} $ assuming R-con\-troll\-ability and R-observability of the underlying system is no limitation. Due to regularity the transfer function $G(z)=C(zE-A)^{-1}B+ D$ can by decomposed into $G(z)=W(z)+\widetilde W(z)+D$ with a strictly proper rational matrix $W(z)$ related to the dynamical part and a polynomial matrix $\widetilde W(z)$ related to the algebraic part, see \cite[Theorem~2-6.2]{Dai89}. The rational matrix $W(z)$ gives rise to a minimal representation $W(z)= C_J(zI-J)^{-1} B_J$ with matrices $J$, $B_J$, $C_J$ satisfying \eqref{kalmanctrl} and \eqref{kalmanobsv}. For the polynomial matrix $\widetilde W(z)$ there exists matrices $N$, $B_N$, $C_N$, where $N$ is nilpotent, such that $\widetilde W(z)=C_N(zN-I)^{-1}B_N$, see \cite[Lemma~2-6.2]{Dai89}. This implies the existence of a state-space representation corresponding to the manifest behavior in terms of an R-controllable and R-observable system.
\end{remark}

\subsection{The Fundamental Lemma for LTI Descriptor Systems}

Next, we briefly recap the fundamental lemma for input-output trajectories of the discrete-time descriptor systems, cf.\ its counterpart Lemma~\ref{lem:FL_des} for the explicit case. We note that a corresponding statement including the state variables can be derived by imposing stronger assumptions on the output, i.e. $C=I$. 
\begin{lemma}[Fundamental lemma for $\operatorname{rk}(E)<n_x$]
    \label{lem:FL_dae}
    Suppose that system~\eqref{sys_dae} is R-controllable. Let $(u,y)$ be a trajectory of length $T-\delta$ such that $u$ is persistently exciting of order $L+n_J+\delta-1$. Then $(\tilde u, \tilde y)$ is a trajectory of~\eqref{sys_dae} of length $L$ if and only if there exists $g\in\mathbb R^{T-L-\delta+2}$ such that
    \begin{equation}
    \label{FL_des_eq}
        \begin{bmatrix}
            \mathbf{\tilde u}_{[0,L-1]}\\
            \mathbf{\tilde y}_{[0,L-1]}
        \end{bmatrix} = 
        \begin{bmatrix}
            \mathcal H_{L} (\mathbf{u}_{[0,T-\delta]})\\
            \mathcal H_{L} (\mathbf{y}_{[0,T-\delta]})
        \end{bmatrix} g.
    \end{equation}
    Moreover, the choice of $g$ in the representation \eqref{FL_des_eq} determines the future values of the input $\tilde u$. More precisely, for $(\tilde u,\tilde y)$ such that \eqref{FL_des_eq} holds one has
    \begin{equation*}\mathbf{\tilde u}_{[0,L+\delta-2]} = \mathcal H_{L+\delta-1}(\mathbf{u}_{[0,T-1]})g.
    \end{equation*}
\end{lemma}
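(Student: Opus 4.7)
The plan is to reduce the descriptor case to the explicit-LTI fundamental lemma (Lemma~\ref{lem:FL_des}) via the quasi-Weierstra\ss{} decomposition \eqref{qweier}--\eqref{qsys}. In these coordinates the dynamic part $z^J_{k+1} = J z^J_k + B_J u_k$ is a genuine explicit LTI system, controllable and observable by Remark~\ref{rem:controllability/observability_weierstrass}, while the algebraic part satisfies $z^N_k = -\sum_{i=0}^{\delta-1} N^i B_N u_{k+i}$ by \eqref{evolution}. Consequently, the full output $y_k = C_J z^J_k + C_N z^N_k + D u_k$ is an affine linear function of $z^J_k$ and $u_k, u_{k+1}, \dots, u_{k+\delta-1}$, which dictates the dependency structure that the data-driven representation must respect.

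For sufficiency I would use a shift-and-linearity argument. Each column of the stacked Hankel matrix in \eqref{FL_des_eq} is a length-$L$ time-shifted window of the trajectory $(u,y)$ and thus lies in $\mathfrak B_{L-1}^{\text{i/o}}$; since \eqref{sys_dae} is linear, $\mathfrak B_{L-1}^{\text{i/o}}$ is closed under linear combinations, so any $g$ yields a valid trajectory. The same reasoning applied to $\mathcal H_{L+\delta-1}(\mathbf u_{[0,T-1]})$ gives the ``moreover'' claim, and the two Hankel equations are automatically consistent because the first $L$ block rows of $\mathcal H_{L+\delta-1}(\mathbf u_{[0,T-1]})$ coincide with $\mathcal H_L(\mathbf u_{[0,T-\delta]})$.

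For necessity, given $(\tilde u, \tilde y) \in \mathfrak B_{L-1}^{\text{i/o}}$ I would first lift it to an input trajectory of length $L+\delta-1$ by picking $\tilde u_L, \dots, \tilde u_{L+\delta-2}$ as the continuation of any infinite trajectory whose prefix is $(\tilde u, \tilde y)$. Then I would apply Lemma~\ref{lem:FL_des} to the explicit dynamic subsystem $(J, B_J, C_J)$ on horizon $L+\delta-1$: the hypothesis of persistency of order $L+n_J+\delta-1 = (L+\delta-1) + n_J$ is precisely what that lemma requires for the dynamic subsystem, and it furnishes $g \in \mathbb R^{T-L-\delta+2}$ satisfying $\mathcal H_{L+\delta-1}(\mathbf u_{[0,T-1]}) g = \tilde u_{[0,L+\delta-2]}$ while simultaneously reproducing the dynamic-part output trajectory $C_J z^J$. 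Since $\tilde y_k$ depends linearly only on $z^J_k$ and on $\tilde u_k, \dots, \tilde u_{k+\delta-1}$, all of which are reproduced by this same $g$, the output Hankel equation $\mathcal H_L(\mathbf y_{[0,T-\delta]}) g = \tilde y_{[0,L-1]}$ follows.

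The main obstacle is ensuring that a \emph{single} $g$ serves both the input-output data on $[0, L-1]$ and the implicit future input tail on $[L, L+\delta-2]$. This hinges on the two structural facts already isolated: the first $L$ block rows of $\mathcal H_{L+\delta-1}(\mathbf u_{[0,T-1]})$ reduce to $\mathcal H_L(\mathbf u_{[0,T-\delta]})$, making the two Hankel equations compatible by construction; and the output at time $k$ is supported exactly by $(z^J_k, u_k, \dots, u_{k+\delta-1})$ --- with the sharp bound $\delta$ from Definition~\ref{def:StructuredNilpotencyIndex} rather than the nilpotency index $\hat\delta$, which is exactly what makes the persistency-of-excitation order $L + n_J + \delta - 1$ both necessary and sufficient to invoke Lemma~\ref{lem:FL_des} on the dynamic subsystem.
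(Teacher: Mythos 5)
Your proposal is correct and takes essentially the same route as the paper, which (via the remark following the lemma) defers to the proof of Lemma~2 in \cite{SchmitzFaulwasserWorthmann22}: pass to quasi-Weierstra{\ss} coordinates, exploit the state evolution \eqref{evolution} so that $y_k$ depends linearly on $z^J_k$ and the extended input window $u_k,\dots,u_{k+\delta-1}$, and invoke the explicit-LTI rank/fundamental-lemma argument for the dynamic part $(J,B_J)$ with window length $L+\delta-1$, which is exactly where the persistency order $L+n_J+\delta-1$ and the column dimension $T-L-\delta+2$ come from. One cosmetic point: R-observability is neither assumed in the lemma nor needed in your argument, so the word ``observable'' in your appeal to Remark~\ref{rem:controllability/observability_weierstrass} should be dropped.
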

\begin{remark}[The structured nilpotency in the lemma]~\\
    In contrast to Lemma~\ref{lem:FL_des}, the version of the fundamental lemma given by \cite{SchmitzFaulwasserWorthmann22} is formulated w.r.t.\ the nilpotency index. The proof by \cite{SchmitzFaulwasserWorthmann22} relies on the state evolution \eqref{evolution} for the quasi-Weierstraß form~\eqref{qsys} and can be directly adapted to the structured nilpotency index. 

    The second statement in Lemma~\ref{lem:FL_des}, which reflects the non-causality of the system, is not explicitly given by~\cite{SchmitzFaulwasserWorthmann22}. It can be seen, however, in the proof of Lemma~2 by~\cite{SchmitzFaulwasserWorthmann22} that the input signals are artificially trimmed to the length $L$. Therefore, a slight modification of the block matrices $\mathcal U$ and $\mathcal V$ in the proof of Lemma~2 by \cite{SchmitzFaulwasserWorthmann22} yields the above assertion.
    Summing up, modulo minor changes, the proof of Lemma~\ref{lem:FL_des} resembles the one given by~\cite{SchmitzFaulwasserWorthmann22}.
\end{remark}

The next lemma generalizes the insights of~\cite{moonen1989} from the explicit LTI to the descriptor setting. It yields a rank condition for the stacked Hankel matrix in~\eqref{FL_des_eq}.
\begin{lemma}[Rank of the Hankel matrices] \label{lem:dim_zJ}
    Suppose that the system~\eqref{sys_dae} is R-controllable and R-observable. Let $(u,y)$ be a trajectory of length $T-\delta$ of~\eqref{sys_dae}  such that $u$ is persistently exciting at least of order $L+n_J+\delta-1$ with $L\geq n_J$. Then
    \begin{equation}
        \label{rank}
        \operatorname{rk}\left(\begin{bmatrix}
            \mathcal H_{L} (\mathbf{u}_{[0,T-\delta]})\\
            \mathcal H_{L} (\mathbf{y}_{[0,T-\delta]})
        \end{bmatrix}\right) = n_u L + n_J.
    \end{equation}
\end{lemma}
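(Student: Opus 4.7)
My plan is to adapt the classical factorization argument of Moonen et al.\ to the descriptor setting via the quasi-Weierstra\ss{} decomposition~\eqref{qweier}. Using the state-evolution formula~\eqref{evolution}, I would first express the output Hankel matrix as
\[
\mathcal{H}_L(\mathbf{y}_{[0,T-\delta]}) \;=\; \mathcal{O}_L\, \mathsf{Z}^J \;+\; \mathcal{T}_L\, \mathcal{H}_{L+\delta-1}(\mathbf{u}_{[0,T-1]}),
\]
where $\mathcal{O}_L = [(C_J)^\top,(C_J J)^\top,\ldots,(C_J J^{L-1})^\top]^\top$ is the observability matrix of the dynamic pair $(J,C_J)$, $\mathsf{Z}^J = [z_0^J,\ldots,z_{T-\delta-L+1}^J]$ collects the dynamic initial states for each Hankel column, and $\mathcal{T}_L$ is the banded block-Toeplitz matrix whose entries encode the causal Markov parameters $C_J J^{k-i-1}B_J$ strictly below the diagonal, the through-term $D-C_N B_N$ on the diagonal, and the anticipative algebraic feedthrough $-C_N N^{i-k}B_N$ in a band of width $\delta-1$ above the diagonal.

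Since the first $L$ block-rows of $\mathcal{H}_{L+\delta-1}(\mathbf{u}_{[0,T-1]})$ coincide with $\mathcal{H}_L(\mathbf{u}_{[0,T-\delta]})$, I would then split $\mathcal{T}_L = [\mathcal{T}_L^{(1)}\;\mathcal{T}_L^{(2)}]$ into a causal and an anticipative block and arrive at the factorization
\[
\begin{bmatrix} \mathcal{H}_L(\mathbf{u}_{[0,T-\delta]}) \\ \mathcal{H}_L(\mathbf{y}_{[0,T-\delta]}) \end{bmatrix} = \underbrace{\begin{bmatrix} I_{n_u L} & 0 & 0 \\ \mathcal{T}_L^{(1)} & \mathcal{T}_L^{(2)} & \mathcal{O}_L \end{bmatrix}}_{\mathcal{S}}\, \underbrace{\begin{bmatrix} \mathcal{H}_L(\mathbf{u}_{[0,T-\delta]}) \\ \mathcal{H}_{\delta-1}(\mathbf{u}_{[L,T-1]}) \\ \mathsf{Z}^J \end{bmatrix}}_{\mathcal{D}}.
\]
The data factor $\mathcal{D}$ has full row rank $n_u(L+\delta-1) + n_J$ by a Moonen-style argument applied to the dynamic subsystem: R-controllability together with~\eqref{kalmanctrl} provides controllability of $(J,B_J)$ with state dimension $n_J$, and persistency of excitation of order $L+n_J+\delta-1$ then furnishes linear independence of the rows built from shifted inputs and the dynamic states $z_j^J$.

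It remains to show $\operatorname{rk}(\mathcal{S}) = n_u L + n_J$. Elementary block-row operations eliminate the causal Toeplitz block $\mathcal{T}_L^{(1)}$ against the identity in the top row, reducing the problem to $\operatorname{rk}(\mathcal{S}) = n_u L + \operatorname{rk}\!\bigl([\mathcal{T}_L^{(2)}\;\mathcal{O}_L]\bigr)$. By R-observability~\eqref{kalmanobsv} and $L\geq n_J$ one has $\operatorname{rk}(\mathcal{O}_L) = n_J$, so the claim reduces to the column-span inclusion $\operatorname{colsp}(\mathcal{T}_L^{(2)}) \subseteq \operatorname{colsp}(\mathcal{O}_L)$. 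This inclusion is the main technical hurdle: it requires a careful manipulation of the nilpotent-block parameters $C_N N^i B_N$ together with the Kalman-type column reductions for $J$ implicit in the quasi-Weierstra\ss{} form, ruling out additional anticipative degrees of freedom in the i/o behavior of length~$L$. Once this inclusion is established, full row rank of $\mathcal{D}$ prevents any rank cancellation in the product, and $\operatorname{rk}(H) = \operatorname{rk}(\mathcal{S}) = n_u L + n_J$ follows.
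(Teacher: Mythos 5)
Your setup coincides with the paper's: after passing to the quasi-Weierstra\ss{} form you write $\mathcal H_L(\mathbf y_{[0,T-\delta]})$ as the observability matrix of $(J,C_J)$ acting on the dynamic states plus a banded Toeplitz map applied to the deep input Hankel $\mathcal H_{L+\delta-1}(\mathbf u_{[0,T-1]})$, and your data factor $\mathcal D$ is, up to a row permutation, exactly the matrix $\bigl[\mathcal H_{z^J}^\top \; \mathcal H_u^\top\bigr]^\top$ whose full row rank the paper imports from the proof of Lemma~2 of Schmitz et al. Your reduction $\operatorname{rk}(\mathcal S\mathcal D)=\operatorname{rk}(\mathcal S)= n_uL+\operatorname{rk}\bigl(\bigl[\mathcal T_L^{(2)}\;\mathcal O_L\bigr]\bigr)$ is also correct. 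The gap is the step you yourself flag: the inclusion $\operatorname{colsp}(\mathcal T_L^{(2)})\subseteq\operatorname{colsp}(\mathcal O_L)$ is not a technicality to be cleared by ``careful manipulation''---it is the entire descriptor-specific content of the lemma, and it does not follow from R-controllability and R-observability. The columns of $\mathcal T_L^{(2)}$ are built from the anticipative parameters $C_NN^iB_N$, $i\geq 1$, of the algebraic subsystem, while $\mathcal O_L$ only sees the dynamic pair $(J,C_J)$; nothing in the hypotheses ties the two together. For instance, a purely algebraic, R-controllable and R-observable system with $n_J=0$ and $C_NNB_N\neq 0$ (take $N$ the $2\times 2$ nilpotent Jordan block, $B_N=[0\;\,1]^\top$, $C_N=[1\;\,0]$, so that $y_k=-u_{k+1}$) has $\mathcal O_L$ with zero columns but $\mathcal T_L^{(2)}\neq 0$, so the inclusion fails and your argument only delivers the lower bound $\operatorname{rk}\geq n_uL+n_J$. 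Without this inclusion the proof is incomplete.

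For comparison, the paper takes a different route at exactly this juncture: instead of keeping the anticipative block explicit, it post-multiplies the output Hankel by a basis $\mathcal H_u^\bot$ of $\ker\mathcal H_{L+\delta-1}(\mathbf u_{[0,T-1]})$, which annihilates the causal and the anticipative input contributions simultaneously and leaves $\mathcal S\mathcal H_{z^J}\mathcal H_u^\bot$ of rank $n_J$. Note, however, that this computes the rank of $\mathcal H_L(\mathbf y_{[0,T-\delta]})$ restricted to the kernel of the \emph{deep} input Hankel, whereas the matrix in \eqref{rank} carries only the shallow block $\mathcal H_L(\mathbf u_{[0,T-\delta]})$ on top; the passage from the former to the claimed rank of the latter is precisely the point your unproven inclusion would have to settle. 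So your proposal has correctly isolated where the difficulty sits, but it has not resolved it, and as stated the key step cannot be established from the hypotheses of the lemma alone.
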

\begin{proof}
    Without loss of generality we assume that the system is in quasi-Weierstraß form \eqref{qsys}. Let $(z,u,y)$ be a state-input-output trajectory of~\eqref{sys_dae}, where $z$ is decomposed into $z^J$ and $z^N$ as in \eqref{transform}. Then the matrix
    \begin{equation*}
        \begin{bmatrix}
            \mathcal H_{z^J}\\ \mathcal H_u
        \end{bmatrix}\doteq\begin{bmatrix}
            \mathcal H_1(\mathbf{z}^J_{[0,T-L-\delta+1]})\\
            \mathcal H_{L+\delta-1}(\mathbf{u}_{[0,T-1]})
        \end{bmatrix}\in\mathbb R^{(n_J + (L+\delta-1)n_u)\times (T-L-\delta+2)}
    \end{equation*}
    has full row rank, cf. \cite[Proof of Lemma~2]{SchmitzFaulwasserWorthmann22}. Let
    \begin{align*}
        \mathcal S = \begin{bmatrix}C_J\\ C_J J\\ \vdots\\ C_J J^{L-1}\end{bmatrix},\quad \mathcal T=\begin{bmatrix}
            0  & \dots & \dots & 0\\
            C_J B_J & \ddots& & \vdots\\
            \vdots & \ddots &\ddots &\vdots\\
            C_J J^{L-2} B_J & \dots & C_J B_J & 0
        \end{bmatrix},\\
        \mathcal R = \begin{bmatrix}
            C_NB_N  & \dots & C_NN^{\delta-1} B_N \\
             & \ddots & & \ddots & \\
             &  & C_NB_N & \dots & C_NN^{\delta-1} B_N
        \end{bmatrix},
    \end{align*}
    where $\mathcal S\in\mathbb R^{n_y L \times n_J}$, $\mathcal T\in\mathbb R^{n_yL\times n_u L}$ and $\mathcal R\in\mathbb R^{n_N L\times n_u(L+\delta-1)}$. Then
    \begin{equation*}
            \mathcal H_L(\mathbf y_{[0,T-\delta]}) =  \mathcal S \mathcal H_{z^J}
            +\left(\begin{bmatrix}
                (\mathcal T+I_{L}\otimes D) & 0_{n_y L \times n_u (\delta-1)}
            \end{bmatrix} +\mathcal R\right) \mathcal H_u.
    \end{equation*}
    Choose a matrix $\mathcal H_u^\bot$ such that the columns of $\mathcal H_u^\bot$ span the kernel of $\mathcal H_u$. Then $\mathcal H_L(\mathbf y_{[0,T-\delta]}) \mathcal H_u^\bot = \mathcal S \mathcal H_{z^J}\mathcal H_u^\bot$.  R-observability together with $L\geq n_J$ implies that $\mathcal S$ has full column rank. Together with the full row rank of the matrix $\begin{bmatrix}
        \mathcal H_{z^J}^\top & \mathcal H_u^\top
    \end{bmatrix}{}^\top$ we have
    \begin{equation}
    \nonumber
        \operatorname{rk}\left(\mathcal H_L(\mathbf y_{[0,T-\delta]}) \mathcal H_u^\bot\right) = \operatorname{rk}\left(\mathcal H_{z^J}\mathcal H_u^\bot\right) = \operatorname{rk}\left(\mathcal H_{z^J}\right)=n_J.
    \end{equation}
    This shows the assertion.
\end{proof}

Moreover, comparison of Lemma~\ref{lem:FL_dae} and Lemma~\ref{lem:FL_des} shows that the explicit LTI case requires more data than the descriptor setting.  In the case of unknown $\delta$ and  $n_J$ Lemma~\ref{lem:dim_zJ} provides a promising approach to estimate these quantities by testing the rank condition \eqref{rank} for various input-output trajectories.

\subsection{The Stochastic Descriptor Fundamental Lemma}

In this subsection, we formulate a corresponding stochastic version of the fundamental lemma. We consider the stochastic system representation
\begin{subequations}\label{eq:RVdynamics_descr}
    \begin{align}
    	E X\inst{k+1} &= AX\inst{k} +\widetilde BV\inst{k}\\
        \label{eq:RVdynamicsb_descr}
   		Y\inst{k} &= C X\inst{k}+\widetilde D V\inst{k}
    \end{align}
\end{subequations}
where the state, exogenous input, and output signals are modelled as stochastic processes, that is $X_k\in L^2((\Omega,\mathcal F,\prob),\mathbb R^{n_x})$, $Y_k\in L^2((\Omega,\mathcal F,\prob),\mathbb R^{n_y})$, and $V_k\in L^2((\Omega,\mathcal F,\prob),\mathbb R^{n_v})$. 
Similar to the deterministic setting (cf.\ \eqref{consistentInit}), invoking the assumed regularity of the pencil $\lambda E-A$, system~\eqref{eq:RVdynamics} can be transformed into quasi-Weierstraß form. Further, the random variable $X_0$ has to be drawn from the set
\begin{equation}
	\label{consistentInitStohastic}
	\mathcal X^0_s \doteq \left\{P\begin{bmatrix}
		\Xi^J\\ \Xi^N
	\end{bmatrix} \,\middle |\, \begin{gathered}\Xi^J\in L^2((\Omega,\mathcal F,\prob),\mathbb R^{n_J}),\\
	\Xi^N = \sum_{i=0}^{\delta-1} N^i \widetilde{B}_N V_{i} \text{ for some }\\V_0,\dots,V_{\delta-1}\in L^2((\Omega,\mathcal F,\prob),\mathbb R^{n_v})\end{gathered}\right\}.
\end{equation}
to be consistent with the system dynamics~\eqref{eq:RVdynamics}. 

As before in Section~\ref{sec:model_Stochastic} the variable $V$ contains the manipulated control inputs~$U$ as well as the exogenous process disturbances~$W$, cf.~\eqref{eq:inputsplit}.

The non-causality (for structured nilpotency index $\delta\geq 2$) of the regular descriptor system~\eqref{eq:RVdynamics_descr} implies that the solution $X=(X_n)_{n\in\mathbb N}$ is not a Markov process with respect to its natural filtration $(\sigma(X_0,\dots, X_k))_{k\in\mathbb N}$. However, by augmenting the state variable with future control inputs (cf.\ Remark~\ref{rem_causal}) one can recover the Markov property.
    
To avoid further cumbersome technicalities, we consider the case where the exogenous disturbance $W$ at future time instances does not influence the present state. More precisely, given system~\eqref{eq:RVdynamics_descr} with split input \eqref{eq:inputsplit}, the matrix $N$ in the quasi-Weierstraß form  with $SF = \begin{bmatrix}
    F_J^\top & F_N^\top
\end{bmatrix}{}^\top$ annihilates $F_N$, i.e.\ $NF_N=0$, cf. \eqref{qweier}. Further, suppose that the noise $W=(W_k)_{k\in\mathbb N}$ is a sequence of independent square-integrable random vectors. In view of the noncausality, cf.~\eqref{evolution}, we think of a control law which assigns the new input action $U_{k+\delta}$ on the basis of the current value of the state $X_k$, i.e.\ $U_{k+\delta}=K_k(X_{k})$ with some measurable map $K_k$. We consider the stochastic processes $\Delta$ and $\Gamma$ given by
\begin{equation*}
        \Delta_k =
        \begin{bmatrix}X_k\\ \mathbf U_{[k,k+\delta-1]}\\ W_k\end{bmatrix},\quad \Gamma_k = 
        \begin{bmatrix}
            \mathbf Y_{[k,k+n_J-1]}\\ \mathbf U_{[k,k+n_J+\delta-2]}\\ \mathbf W_{[k,k+n_J-1]}
        \end{bmatrix}
\end{equation*}
together with their natural filtrations $(\mathcal F_k)_{k\in\mathbb N}$ and $(\mathcal G_k)_{k\in\mathbb N}$, respectively, with $\sigma$-algebras given by $\mathcal F_k=\sigma(\Delta_0,\dots,\Delta_k)$ and $\mathcal G_k = \sigma(\Gamma_0,\dots,\Gamma_k)$. The next proposition discusses the Markov property of both stochastic processes.

\begin{proposition}[Markov property]
        \label{prop:markov_descr}
        Suppose that $NF_N=0$. Further, let {$\Delta_0, W_1,W_2,\dots$ } be mutually independent and let the system be governed by the control law $U_{k+\delta}= K_k(X_k)$ for $k\in\mathbb N$ with a measurable function $K_k$. Then
        \begin{subequations}
        \begin{enumerate}
            \item $W_j$ for $j\geq k+1$ is independent of $\mathcal F_k$ as well as $\sigma(\Delta_k)$ and
            \item $\Delta$ is a Markov process, i.e.\
                \label{eq:markov_descr}
                \begin{equation}
                    \label{eq:markov1_descr}
            \prob[\Delta_{k+1} \in\mathcal A\,|\, \mathcal F_k] = \prob[\Delta_{k+1}\in\mathcal A\,|\, \sigma(\Delta_k)]
                \end{equation}
            for every Borel set $\mathcal A\subset \mathbb R^{n_x+{n_u\delta+n_w}}$.
        \end{enumerate}
            If in addition system~\eqref{eq:RVdynamics_descr} is R-observable, then
        \begin{enumerate}[resume]
            \item $W_j$ for $j\geq k+n_J$ is independent of $\mathcal G_k$ as well as $\sigma(\Gamma_k)$ and
            \item  the stochastic process $\Gamma$ satisfies the Markov property
            \begin{equation}
                \label{eq:markov2_descr}
            \prob[\Gamma_{k+1} \in\mathcal B\,|\, \mathcal G_k] = \prob[\Delta_{k+1}\in\mathcal B\,|\, \sigma(\Delta_k)]
            \end{equation}
            for every Borel set $\mathcal B\subset\mathbb R^{n_y n_J+{n_u(n_J+\delta-1) + n_w n_J}}$.
        \end{enumerate}
    \end{subequations}
    \end{proposition}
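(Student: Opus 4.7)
My approach is to lift the quasi-Weierstraß decomposition~\eqref{qweier}--\eqref{evolution} to $L^2$-valued processes and to isolate the structural consequence of the hypothesis $NF_N=0$. Since $N$ is nilpotent, $NF_N=0$ implies $N^i F_N = 0$ for every $i\geq 1$, so the algebraic component of the lifted state satisfies
\[
    Z_k^N = -F_N W_k - \sum_{i=0}^{\delta-1} N^i B_N U_{k+i},
\]
that is, $Z_k^N$---and hence $X_k$---depends on $W$ only through $W_k$, not through any $W_j$ with $j\geq k+1$. Combined with the dynamic recursion $Z_{k+1}^J = J Z_k^J + B_J U_k + F_J W_k$ and the feedback law $U_{k+\delta}=K_k(X_k)$, this structural fact drives all four claims.

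For parts (i) and (ii) the plan is to prove by induction on $k$ that $\Delta_k = F_k(\Delta_0, W_1, \dots, W_k)$ for some Borel-measurable $F_k$. The base case holds by construction of $\Delta_0$; in the induction step, the new entries of $\Delta_{k+1}$ relative to $\Delta_k$ are $X_{k+1}$, $U_{k+\delta}$, and $W_{k+1}$, each a measurable function of $(\Delta_k, W_{k+1})$: $U_{k+\delta}=K_k(X_k)$ by the feedback law, and $X_{k+1}$ via the quasi-Weierstraß recursion together with the $U_j$'s already contained in $\Delta_k$. The inclusion $\mathcal F_k \subseteq \sigma(\Delta_0, W_1, \dots, W_k)$ and mutual independence of $\Delta_0, W_1, W_2, \dots$ then yield (i). For (ii) I would combine the representation $\Delta_{k+1}=G_k(\Delta_k, W_{k+1})$ with the $\mathcal F_k$-independence of $W_{k+1}$ from (i) and invoke the standard freezing lemma for conditional expectations, reducing $\mean[f(\Delta_{k+1})\,|\,\mathcal F_k]$ to a $\sigma(\Delta_k)$-measurable function for every bounded Borel $f$.

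Parts (iii) and (iv) follow the same template, but require more bookkeeping over the longer output/input window in $\Gamma_k$. Unrolling the dynamics shows that $\Gamma_k$ is a measurable function of $\Delta_0, W_1, \dots, W_{k+n_J-1}$, which gives (iii) by the independence argument above. The core of (iv) is the claim $\Gamma_{k+1} = H_k(\Gamma_k, W_{k+n_J})$ for a measurable $H_k$. The only new components relative to $\Gamma_k$ are $Y_{k+n_J}$, $U_{k+n_J+\delta-1}$, and $W_{k+n_J}$. R-observability enters essentially at this point: by Lemma~\ref{state_align} applied at the shifted horizon, the $n_J$ outputs and $n_J+\delta-1$ inputs stored in $\Gamma_k$ uniquely determine $Z_{k+n_J-1}^J$, and hence $X_{k+n_J-1}$. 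Consequently $U_{k+n_J+\delta-1}=K_{k+n_J-1}(X_{k+n_J-1})$ is measurable in $\Gamma_k$, and a one-step application of the quasi-Weierstraß recursion (using $NF_N=0$ once more) produces $X_{k+n_J}$ and $Y_{k+n_J}$ from $\Gamma_k$ and $W_{k+n_J}$. Freezing as in (ii) then closes the argument.

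The main obstacle I anticipate is the bookkeeping in step (iv): precisely tracking which $U_j$ and $W_j$ enter $\Gamma_{k+1}$ and invoking Lemma~\ref{state_align} pathwise to recover $X_{k+n_J-1}$ from the random data contained in $\Gamma_k$. The hypothesis $NF_N=0$ is indispensable throughout: without it, $Z_k^N$ would carry the future disturbances $W_{k+1},\dots,W_{k+\delta-1}$, both functional representations $\Delta_k=F_k(\Delta_0, W_1, \dots, W_k)$ and $\Gamma_k=\tilde F_k(\Delta_0, W_1, \dots, W_{k+n_J-1})$ would collapse, and with them the independence statements in (i), (iii) and the Markov transitions in (ii), (iv).
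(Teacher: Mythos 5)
Your proposal is correct and follows essentially the same route as the paper's proof: the quasi-Weierstraß splitting with $NF_N=0$ to get the one-step functional recursions $\Delta_{k+1}=f(\Delta_k,W_{k+1})$ and $\Gamma_{k+1}=h(\Gamma_k,W_{k+n_J})$, an inductive independence argument for (i) and (iii), the freezing lemma for (ii) and (iv), and Lemma~\ref{state_align} (R-observability) to recover the state from $\Gamma_k$. No gaps; the bookkeeping you flag in step (iv) is exactly what the paper resolves by noting that $W_{k+1},\dots,W_{k+n_J-1}$ are already components of $\Gamma_k$.
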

    \begin{proof}
        Part~(i). According to the state evolution~\eqref{evolution} the state $X_{k+1}$ can be expressed linearly by $X_k$, $U_k,\dots, U_{k+\delta}$ in combination with $W_k,W_{k+1}$. Together with the control law for $U_{k+\delta}=K_k(X_k)$ we find the functional description
        \begin{equation}
            \label{eq:recursion_descr}
            X_{k+1}= \tilde f(\Delta_k,W_{k+1}), \quad \Delta_{k+1}=f(\Delta_k,W_{k+1})
        \end{equation}
        with linear functions $f$ and $\tilde f$. This shows
        \begin{equation*}\sigma(\Delta_{k+1})\subset\mathcal F_{k+1}\subset\sigma(\Delta_0,\dots,\Delta_k, W_{k+1})\end{equation*} for all $k\in\mathbb N$. By assumption $W_j$ for $j\geq 1$ is independent of $\mathcal F_0=\sigma(\Delta_0)=\sigma(X_0,U_0,\dots, U_{\delta-1}, W_0)$. Given $k\in\mathbb N$, we see by induction that $W_j$ for $j\geq k+2$ is independent of $\sigma(\Delta_0,\dots,\Delta_{k},W_{k+1})$ and therefore, is also independent of $\mathcal F_{k+1}$ as well as $\sigma(\Delta_k)$. 
        
        Part~(ii). The conditional probability can by written in terms of the conditional expectation operator employing the characteristic function $\mathds 1_{\mathcal A}$ of the set $\mathcal A$. The $\mathcal F_k$-measurability of $\Delta_{k}$ and the fact that $W_{k+1}$ is independent of $\mathcal F_k$ imply
        \begin{align*}
            \prob[\Delta_{k+1}\in\mathcal A\,|\,\mathcal F_k] &= \mean[\mathds 1_{\mathcal A}(\Delta_{k+1})\,|\, \mathcal F_k]\\ &= \mean[\mathds 1_{\mathcal A}(f(\Delta_k,W_{k+1}))\,|\, \mathcal F_k] = g(\Delta_k)
        \end{align*}
        with $g$ defined by $g(d)=\mean[\mathds 1_{\mathcal A}(f(d,W_{k+1}))]$ for $d\in\mathbb R^{n_x+n_v\delta}$, cf.\ \cite[Problem~IV-34]{malliavin1995}. Similarly,
        \begin{equation*}
            \prob[\Delta_{k+1}\in\mathcal A\,|\,\sigma(\Delta_k)]
            = \mean[\mathds 1_{\mathcal A}(f(\Delta_k,W_{k+1}))\,|\, \sigma(\Delta_k)] = g(\Delta_k).
        \end{equation*}
        This proves~\eqref{eq:markov1_descr}.

        Part~(iii). By \eqref{eq:RVdynamicsb_descr} the random variable $\Gamma_k$ is given in a linear way by $\Delta_k,\dots,\Delta_{k+n_J-1}$. This yields $\sigma(\Gamma_k)\subset \mathcal G_k\subset \mathcal F_{k+n_J-1}$. This together with (i) implies (iii).

        Part~(iv). The second recursion in \eqref{eq:recursion_descr} yields
        \begin{equation*}
            \Gamma_{k+1}  = \tilde h (\Delta_k, W_{k+1},\dots, W_{k+n_J})
        \end{equation*}
        for all $k\in\mathbb N$, where $\tilde h$ is a linear map. By R-observability of system~\eqref{eq:RVdynamics_descr} there exists a linear map $\hat h$ such that $\Delta_k = \hat h (\Gamma_k)$ for all $k\in\mathbb N$, cf.\ Lemma~\ref{state_align}. Therefore, there is a certain linear map $h$ such that
        \begin{equation*}
            \Gamma_{k+1} = h(\Gamma_k, W_{k+n_J}).
        \end{equation*}
        Assertion~\eqref{eq:markov2_descr} follows with a similar argument as in (ii).\qedhere
    \end{proof}
    \begin{corollary}[Non-anticipativity]
        \label{cor:nonanticipativity_descr}
        Let the assumptions of Proposition~\ref{prop:markov_descr} hold. Then $U_k$ and $W_j$ are independent for all {$k,j\in\mathbb N$
        \begin{itemize}
            \item[(i)] with $0\leq k\leq \delta-1$ and $j\geq 1$;
            \item[(ii)] with $k\geq \delta$ and $j\geq k-\delta+1$.
        \end{itemize}}
    \end{corollary}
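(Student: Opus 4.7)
My plan is to reduce both statements to the independence statement already provided by Proposition~\ref{prop:markov_descr}(i), namely that $W_j$ with $j \ge k+1$ is independent of $\mathcal F_k = \sigma(\Delta_0,\dots,\Delta_k)$. The key observation is that under the control law $U_{k+\delta} = K_k(X_k)$, every input $U_k$ is $\mathcal F_{\max(0,k-\delta)}$-measurable: the early inputs are baked into $\Delta_0$, while the later inputs are explicit measurable functions of $X_{k-\delta}$, which is a component of $\Delta_{k-\delta}$.

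For part (i), I note that $\Delta_0 = (X_0^\top, U_0^\top, \dots, U_{\delta-1}^\top, W_0^\top)^\top$ by definition, so $U_k$ for $0 \le k \le \delta-1$ is $\sigma(\Delta_0) = \mathcal F_0$-measurable. By the mutual independence assumption of Proposition~\ref{prop:markov_descr}, $W_j$ for $j \ge 1$ is independent of $\sigma(\Delta_0)$, and thus independent of $U_k$.

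For part (ii), the control law gives $U_k = K_{k-\delta}(X_{k-\delta})$ for $k \ge \delta$. Since $X_{k-\delta}$ is a (linear) component of $\Delta_{k-\delta}$, the random variable $U_k$ is $\sigma(\Delta_{k-\delta}) \subseteq \mathcal F_{k-\delta}$-measurable. Applying Proposition~\ref{prop:markov_descr}(i) with the index $k$ replaced by $k-\delta$, we obtain that $W_j$ with $j \ge (k-\delta)+1 = k-\delta+1$ is independent of $\mathcal F_{k-\delta}$, hence independent of $U_k$.

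I expect no serious obstacle here: the corollary is essentially a bookkeeping consequence of the Markov decomposition established in the proposition, combined with the explicit non-anticipative structure of the feedback $U_{k+\delta}=K_k(X_k)$. The only subtlety worth flagging is that one must account for the shift by $\delta$ coming from the non-causality of the descriptor system; this shift is precisely what guarantees that $W_j$ is ``fresh'' relative to $U_k$ in both regimes.
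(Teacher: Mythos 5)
Your proof is correct and follows essentially the same route as the paper: part (i) is read off from the mutual independence of $\Delta_0, W_1, W_2,\dots$ (since $U_0,\dots,U_{\delta-1}$ are components of $\Delta_0$), and part (ii) uses the $\mathcal F_{k-\delta}$-measurability of $U_k = K_{k-\delta}(X_{k-\delta})$ together with Proposition~\ref{prop:markov_descr}~(i). The paper's own proof is just a terser version of exactly this argument.
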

    \begin{proof}
        For (i) this follows by assumptions. We show (ii). The control law for $k\geq \delta$ implies $U_k = K_{k-\delta}(X_{k-\delta})$, where the random variable $X_{k-\delta}$ is $\mathcal F_{k-\delta}$-measurable. Therefore, the assertion follows with Proposition~\ref{prop:markov_descr}~(i).
    \end{proof}

\begin{remark}[Completeness and controllability of $\mathfrak C_\infty$ and $\mathfrak S_\infty$]\label{rem:behavior_dae}
    We have shown completeness of the behaviors~$\mathfrak C_\infty$ and~$\mathfrak S_\infty$ in Lemmata~\ref{stochbehav_complete},~\ref{stochbehav_ctrl} and~\ref{lem:compl_ctrl_stochBehav}. However, if the realization behavior $\mathfrak B_\infty$ is controllable with delay $T'=\delta-1+n_J$, then $\mathfrak C_\infty$ and $\mathfrak S_\infty$ are controllable with delay $T'=\delta-1+n_J$ as a consequence of Lemma~\ref{ctrlRbehav}.
\end{remark}

Employing the (deterministic) fundamental lemma for descriptor systems, Lemma~\ref{lem:FL_dae}, the results from Section~\ref{sec:StochFundLem} can be formulated for descriptor systems. 
It should be mentioned that in contrast to the explicit LTI case a lower order of persistent excitation is needed and that the data demand in the Hankel matrix is reduced. Thus, the subsequent results are the analogs of Lemma~\ref{lem:colEqui} and Lemma~\ref{lem:FL_stoch} and they are stated for the sake of completeness.

Notice that, similar to the explicit case, the dynamics in random variables \eqref{eq:RVdynamics_descr} have counterparts in terms of corresponding expansion coefficients and in terms of realizations, cf. \eqref{eq:RVdynamics}, \eqref{eq:Realdynamics}, and \eqref{eq:Coeffdynamics} in Section \ref{sec:model_Stochastic}.
\begin{lemma}[Column-space equivalence]\label{lem:colEqui_descr} 
    Let the   descriptor system \eqref{eq:RVdynamics_descr} be R-controllable and regular with dimension~$n_J$ of the dynamical part as well as structured nilpotency index~$\delta$. For $T \in \mathbb{Z}_+$, let $(V,Y)$ be random-variable input-output trajectories of \eqref{eq:RVdynamics_descr} with corresponding expansion coefficients $(\mathsf v, \mathsf y)$. Let $(\hat v,\hat y)$ be realization trajectories also corresponding to  \eqref{eq:RVdynamics_descr}. Further, assume that $\hat v$ and the coefficients~$\mathsf v^i$, $i \in \mathbb N$, are persistently exciting of order $L+n_J+\delta -1$.
\begin{itemize}
\item[(i)] Then, for all $i \in \mathbb N$
\begin{subequations}
\begin{equation} \label{eq:colEqui_descr}
\text{$\operatorname{colsp}
    \begin{bmatrix}
        \mathcal H_{L}(\mathsf v^i_{[0,T-\delta]})\\
        \mathcal H_L(\mathsf y^i_{[0,T-\delta]})
    \end{bmatrix}
    = \operatorname{colsp}
    \begin{bmatrix}
        \mathcal H_{L}(\hat{\mathbf{v}}_{[0,T-\delta]})\\
        \mathcal H_L(\hat{\mathbf{y}}_{[0,T-\delta]})
    \end{bmatrix}.$}
\end{equation}
\item[(ii)] Moreover, for all $g \in \R^{T-L-\delta+2}$, there exists a function $G\in L^2(\Omega, \mathbb R^{T-L-\delta+2})$ such that
\begin{equation}\label{eq:colEquiRV_descr}
    \begin{bmatrix}
        \mathcal H_{L}(\mathbf V_{[0,T-\delta]})\\
        \mathcal H_L(\mathbf Y_{[0,T-\delta]})
    \end{bmatrix} g = \begin{bmatrix}
        \mathcal H_{L}(\hat{\mathbf{v}}_{[0,T-\delta]})\\
        \mathcal H_L(\hat{\mathbf{y}}_{[0,T-\delta]})
    \end{bmatrix} G. 
\end{equation}
\end{subequations}
\end{itemize}
\end{lemma}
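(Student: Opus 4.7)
The plan is to reduce the two claims to the deterministic descriptor fundamental lemma (Lemma~\ref{lem:FL_dae}) and the behavioral lift (Theorem~\ref{thm:lift}), in close analogy with the proof of Lemma~\ref{lem:FL_stoch} in the explicit case. The overall structure is: first establish (i) coefficient-wise using that each expansion coefficient trajectory is itself a trajectory of the underlying deterministic descriptor system; then derive (ii) by lifting the coefficient statement via linearity of the series expansion, using the Moore--Penrose pseudo-inverse to secure square-summability of the resulting coefficient sequence.

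For part (i), I would first invoke Theorem~\ref{thm:equivalence} to conclude from $(V,Y)\in\mathfrak S_{T-\delta}^{\text{i/o}}$ that $(\mathsf v^i,\mathsf y^i)\in\mathfrak B_{T-\delta}^{\text{i/o}}$ for every $i\in\mathbb N$, and, by assumption, $(\hat v,\hat y)\in\mathfrak B_{T-\delta}^{\text{i/o}}$. Since both $\hat v$ and each $\mathsf v^i$ are persistently exciting of order $L+n_J+\delta-1$, Lemma~\ref{lem:FL_dae} applies to each of them separately. Hence the column space of each of the two Hankel matrices in \eqref{eq:colEqui_descr} equals the vectorized finite-horizon manifest behavior
\begin{equation*}
\mathcal B_L\doteq\left\{\begin{bmatrix}\mathbf{\tilde u}_{[0,L-1]}\\ \mathbf{\tilde y}_{[0,L-1]}\end{bmatrix}\,\middle|\,(\tilde u,\tilde y)\in\mathfrak B_{L-1}^{\text{i/o}}\right\}\subset \mathbb R^{(n_v+n_y)L},
\end{equation*}
which is a fixed subspace independent of the particular data used to build the Hankel matrix. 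This yields the claimed equality of column spaces. As a cross-check on dimensions, Lemma~\ref{lem:dim_zJ} gives the common rank $n_v L+n_J$ of both Hankel matrices.

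For part (ii), fix $g\in\mathbb R^{T-L-\delta+2}$. Using that $V$ and $Y$ admit series expansions in the orthogonal basis $(\phi^i)_{i\in\mathbb N}$, the Hankel matrices in random variables decompose as $\mathcal H_L(\mathbf V_{[0,T-\delta]})=\sum_{i\in\mathbb N}\phi^i\,\mathcal H_L(\mathsf v^i_{[0,T-\delta]})$ and analogously for $Y$. Applying part (i) to each $i\in\mathbb N$, there exists $g^i\in\mathbb R^{T-L-\delta+2}$ such that
\begin{equation*}
\begin{bmatrix}\mathcal H_{L}(\mathsf v^i_{[0,T-\delta]})\\ \mathcal H_L(\mathsf y^i_{[0,T-\delta]})\end{bmatrix}g=\begin{bmatrix}\mathcal H_{L}(\hat{\mathbf v}_{[0,T-\delta]})\\ \mathcal H_L(\hat{\mathbf y}_{[0,T-\delta]})\end{bmatrix}g^i.
\end{equation*}
Define $G\doteq\sum_{i\in\mathbb N}\phi^i g^i$ and use the orthogonality of $(\phi^i)_{i\in\mathbb N}$ and the linearity of the Hankel operators to obtain \eqref{eq:colEquiRV_descr}.

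The main obstacle I anticipate is ensuring that the above $G$ is a bona fide element of $L^2(\Omega,\mathbb R^{T-L-\delta+2})$, i.e.\ that the sequence $(g^i)_{i\in\mathbb N}$ is square-summable. The remedy, mirroring the construction in the proof of Lemma~\ref{lem:FL_stoch}, is to select the particular solution obtained from the Moore--Penrose pseudo-inverse,
\begin{equation*}
g^i=\begin{bmatrix}\mathcal H_{L}(\hat{\mathbf v}_{[0,T-\delta]})\\ \mathcal H_L(\hat{\mathbf y}_{[0,T-\delta]})\end{bmatrix}^{\dagger}\begin{bmatrix}\mathcal H_{L}(\mathsf v^i_{[0,T-\delta]})\\ \mathcal H_L(\mathsf y^i_{[0,T-\delta]})\end{bmatrix}g.
\end{equation*}
Since $(\mathsf v^i,\mathsf y^i)_{i\in\mathbb N}\in\mathfrak C_{T-\delta}^{\text{i/o}}$ implies that $(\mathsf v^i_{[0,T-\delta]},\mathsf y^i_{[0,T-\delta]})_{i\in\mathbb N}$ is square-summable, applying the bounded linear operator above preserves this property, so $G\in L^2(\Omega,\mathbb R^{T-L-\delta+2})$. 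This completes the plan.
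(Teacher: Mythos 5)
Your plan is correct and follows essentially the route the paper intends: the paper states this lemma as the descriptor analog of Lemma~\ref{lem:colEqui} and Lemma~\ref{lem:FL_stoch}, obtained by substituting the deterministic descriptor fundamental lemma (Lemma~\ref{lem:FL_dae}) for Lemma~\ref{lem:FL_des} and reusing the behavioral-lift and pseudo-inverse arguments of Section~\ref{sec:StochFundLem}. Your identification of both column spaces with the vectorized length-$L$ manifest behavior, and your use of the Moore--Penrose solution to secure square-summability of $(g^i)_{i\in\mathbb N}$ and hence $G\in L^2(\Omega,\mathbb R^{T-L-\delta+2})$, match that construction.
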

A column-space inclusion result, i.e. a counterpart to Corollary~\ref{cor:inclusion}, can be obtained without further difficulties and is hence not detailed. Finally, the fundamental lemma for stochastic descriptor systems extends and combines the developments of~\cite{Pan21s} and~\cite{SchmitzFaulwasserWorthmann22}.
\begin{lemma}[Stochastic descriptor fundamental lemma]~\\
    \label{lem:FL_stoch_descr}
    Let the   descriptor system \eqref{eq:RVdynamics_descr} be R-controllable and regular with dimension~$n_J$ of the dynamical part as well as structured nilpotency index~$\delta$. For $T \in \mathbb{Z}_+$, let $(\tilde V, \tilde Y)$ be random-variable input-output trajectories of \eqref{eq:RVdynamics_descr} with corresponding expansion coefficients $(\tilde{\mathsf{v}},\tilde{\mathsf{y}})$. Let $(v,y)$ be realization data corresponding to \eqref{eq:RVdynamics_descr} such that $v$ is persistently exciting of order $L+n_J+\delta-1$. Then, the following statements hold:
     \begin{subequations}
    \begin{enumerate}
        \item $(\tilde{\mathsf v},\tilde{\mathsf y})$ are expansion coefficient trajectories corresponding to \eqref{eq:RVdynamics_descr}   if and only if there is $\mathsf g\in \ell^2(\mathbb R^{T-L-\delta+2})$ such that
        \begin{equation}
            \label{eq:FL_coef_descr}
            \begin{bmatrix}
                \mathsf{\tilde v}^i_{[0,L-1]}\\ \mathsf{\tilde y}^i_{[0,L-1]}
            \end{bmatrix} = \begin{bmatrix}
                \mathcal H_{L} (\mathbf{v}_{[0,T-\delta]}) \\ \mathcal H_{L} (\mathbf{y}_{[0,T-\delta]})
            \end{bmatrix}\mathsf g^i
        \end{equation}
        for all $i\in\mathbb N$.
        \item $(\tilde{V},\tilde{Y})$ are random variable trajectories of \eqref{eq:RVdynamics_descr} if and only if there is $G\in L^2(\Omega,\mathbb R^{T-L-\delta+2})$ such that
        \begin{equation}
            \label{eq:FL_stoch_eq_descr}
            \begin{bmatrix}
                \mathbf{\tilde V}_{[0,L-1]}\\ \mathbf{\tilde Y}_{[0,L-1]}
            \end{bmatrix} = \begin{bmatrix}
                \mathcal H_{L} (\mathbf{v}_{[0,T-\delta]}) \\ \mathcal H_{L} (\mathbf{y}_{[0,T-\delta]})
            \end{bmatrix} G.
        \end{equation}
    \end{enumerate}
     \end{subequations}
\end{lemma}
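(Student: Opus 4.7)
The plan is to mirror the proof of Lemma~\ref{lem:FL_stoch} while substituting the explicit deterministic fundamental lemma by its descriptor counterpart, Lemma~\ref{lem:FL_dae}. Before starting, I would note that Theorems~\ref{thm:lift} and~\ref{thm:equivalence} transfer verbatim to the descriptor setting: their proofs only rely on linearity of the dynamics and on orthogonality of the basis $(\phi^i)_{i\in\mathbb N}$; the Galerkin argument \eqref{eq:zeroVariance} applies directly to \eqref{eq:RVdynamics_descr} by simply inserting the descriptor matrix~$E$ on the left-hand side of the state equation. This is implicit in Remark~\ref{rem:behavior_dae} but I would state it as a preliminary step.

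For part (i), the equivalence (iii)$\Leftrightarrow$(iv) in (the descriptor version of) Theorem~\ref{thm:equivalence} yields that $(\tilde{\mathsf v},\tilde{\mathsf y})$ is an expansion coefficient trajectory if and only if, for every $i\in\mathbb N$, $(\tilde{\mathsf v}^i,\tilde{\mathsf y}^i)$ is an input-output trajectory of length $L$ of the underlying deterministic descriptor system. Applying Lemma~\ref{lem:FL_dae} coordinate-wise in $i$ then gives existence of $\mathsf g^i\in\mathbb R^{T-L-\delta+2}$ satisfying \eqref{eq:FL_coef_descr}, and conversely. To ensure $\mathsf g\in\ell^2(\mathbb R^{T-L-\delta+2})$, I would select the minimum-norm particular solution via the Moore--Penrose pseudo-inverse,
\begin{equation*}
    \mathsf g^i = \begin{bmatrix}
        \mathcal H_{L}(\mathbf v_{[0,T-\delta]})\\
        \mathcal H_{L}(\mathbf y_{[0,T-\delta]})
    \end{bmatrix}^{\dagger}
    \begin{bmatrix}
        \tilde{\mathsf v}^i_{[0,L-1]}\\
        \tilde{\mathsf y}^i_{[0,L-1]}
    \end{bmatrix}.
\end{equation*}
Since the pseudo-inverse acts as a bounded linear operator and since by assumption the stacked sequence $\bigl((\tilde{\mathsf v}^i_{[0,L-1]}, \tilde{\mathsf y}^i_{[0,L-1]})\bigr)_{i\in\mathbb N}$ is square summable, square summability of $\mathsf g$ follows immediately.

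For part (ii), I would invoke the behavioral lift $\Phi$ of Theorem~\ref{thm:lift}(i). Given coefficients $\mathsf g^i$ from part~(i), I set $G \doteq \sum_{i\in\mathbb N} \phi^i \mathsf g^i$. The series converges in $L^2(\Omega,\mathbb R^{T-L-\delta+2})$ by the Fischer--Riesz theorem precisely because $\mathsf g\in\ell^2(\mathbb R^{T-L-\delta+2})$. Linearity of the Hankel matrix action together with $L^2$-convergence of the expansions of $\tilde V$ and $\tilde Y$ then turns \eqref{eq:FL_coef_descr} into \eqref{eq:FL_stoch_eq_descr}. The converse direction proceeds by expanding the given $G$ in the basis to recover coefficients $\mathsf g^i$, reducing to part (i), and invoking the bijectivity of $\Phi$ (modulo the subtlety of $\sigma(\mathfrak F)$ versus $\mathcal F$ highlighted in Remark~\ref{rem:liftPCE}).

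The main obstacle is neither implication itself but the careful bookkeeping around the $\delta$-shift: Lemma~\ref{lem:FL_dae} produces input-output trajectories of length $L$, while the construction of $\mathsf g$ must remain compatible with the non-causality reflected in the second assertion of Lemma~\ref{lem:FL_dae} (namely that the choice of $g$ simultaneously determines the inputs over the extended window $[0,L+\delta-2]$). Consequently, one must verify that when stitching together the coefficient-wise applications, the $\mathsf g^i$ selected per expansion index are mutually consistent with a single underlying random variable $G$, which is exactly what the pseudo-inverse construction guarantees and what the behavioral lift then promotes to the random variable level.
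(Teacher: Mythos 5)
Your proposal is correct and follows essentially the same route the paper takes: the paper states Lemma~\ref{lem:FL_stoch_descr} as the direct analog of Lemma~\ref{lem:FL_stoch}, whose proof replaces Lemma~\ref{lem:FL_des} by the descriptor fundamental lemma (Lemma~\ref{lem:FL_dae}), obtains square summability of $\mathsf g$ via the pseudo-inverse, and lifts to random variables via $G=\sum_{i\in\mathbb N}\phi^i\mathsf g^i$ using Theorem~\ref{thm:lift}. Your preliminary remark that the lift and equivalence results carry over to the descriptor dynamics, and your note on the $\delta$-shift bookkeeping, are consistent with what the paper leaves implicit in Remark~\ref{rem:behavior_dae}.
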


\subsection{Stochastic Optimal Control for Descriptor Systems}

Next, we consider the stochastic system \eqref{eq:RVdynamics_descr} with input partition~\eqref{eq:inputsplit}. We model the input $U\inst{k}$ as a stochastic process adapted to the filtration $(\mathcal G_k)_{k\in\mathbb N}$ as in Proposition~\ref{prop:markov_descr} and according to the causality condition w.r.t.\ the disturbance stated in Corollary~\ref{cor:nonanticipativity_descr}. 
That is, for the sake of avoiding tedious technicalities, we suppose that $NF_N = 0$ holds in the quasi-Weierstraß form, cf.\ \eqref{qsys}.

As a preliminary step to derive the counterpart of OCP~\eqref{eq:OCPPCE} for stochastic descriptor systems, we state the Hankel matrix description analogously to OCP~\eqref{eq:OCPdata}:
\begin{subequations}\label{eq:OCPdata_descr}
\begin{align}
    &\operatorname*{minimize}_{U,Y, G }   \,
    \sum_{k=\delta+n_J-1}^{N+\delta+n_J-2} \mean[Y_k^\top QY_k + U^\top_k RU_k]\\
     &\qquad \text{ subject to }\qquad \nonumber
        \end{align}
         \begin{align}
  & \qquad\begin{bmatrix}
         \mathbf{Y}_{[0,N+\delta+n_J-2]}\\
         \mathbf{U}_{[0,N+\delta+n_J-2]}\\
        \widehat{\mathbf{W}}_{[0,N+\delta+n_J-2]}
    \end{bmatrix}
    =
        \begin{bmatrix}
            \mathcal H_{N+\delta+n_J-1}\left(\mathbf{y}^\text{d}_{[0,T-\delta]}\right)\\
            \mathcal H_{N+\delta+n_J-1}\left(\mathbf{u}^\text{d}_{[0,T-\delta]}\right)\\
            \mathcal H_{N+\delta+n_J-1}\left(\mathbf{w}^\text{d}_{[0,T-\delta]}\right)
        \end{bmatrix}
        G \label{eq:OCPdata_hankel_descr},
\\
    & \qquad  \begin{bmatrix}
             \mathbf{ Y}_{[0,n_J-1]}\\
             \mathbf{ U}_{[0,\delta+n_J-1]}    
             \end{bmatrix} 
             = \begin{bmatrix}
             \mathbf{\widehat Y}_{[0,n_J-1]}\\
             \mathbf{\widehat U}_{[0,\delta+n_J-1]} 
         \end{bmatrix}.   \label{eq:OCPdata_consist_cond_descr}
     \end{align}
\end{subequations}
The \emph{consistency condition}~\eqref{eq:OCPdata_consist_cond_descr} together with  R-observability of the underlying system~\eqref{eq:RVdynamics_descr} guarantees uniqueness of the latent state trajectory for sufficiently large horizon~$N$. In contrast to the explicit case, the OCP requires $n_J+\delta-1$ consistent random-variable input and $n_J$ output pairs, cf.\ Lemma~\ref{state_align}, which explains the required length $N+\delta+n_J-1$ in constraint~\eqref{eq:OCPdata_hankel_descr}.

Next, we adjust the sufficient conditions for exact uncertainty propagation given in Lemma~\ref{lem:no_truncation_error}. Subsequently $(\phi^i)_{i\in\mathbb N}$ denotes a PCE basis. 

\begin{lemma}[Exact uncertainty propagation via expansions]\label{lem:no_truncation_error_descr}
    Consider the stochastic descriptor LTI system~\eqref{eq:RVdynamics_descr} and suppose that $\widehat{W}_{k}$ for $k \in \I_{[0,N+\delta+n_J-2]}$, 
    $\hat{Y}_{k}$ for $k \in \I_{[0,n_J-1]}$, and 
    $\widehat{U}_{k}$ for $k \in \I_{[0,\delta+n_J-1]}$ admit exact PCEs with finite dimensions~$p_w$ and~$p_{\text{ini}}$, i.e., $\widehat{W}_{k}= \sum_{i=0}^{p_w-1}\hat{\pce{w}}^i \phi_{k}^i$, $\widehat{Y}_{k} = \sum_{i=0}^{p_{\text{ini}}-1}\hat{\pce{y}}_k^i \phi_{\text{ini}}^i$, and $\widehat{U}_{k}=  \sum_{i=0}^{p_{\text{ini}}-1}\hat{\pce{u}}_k^i \phi_{\text{ini}}^i$, respectively. Assume that $\phi_{\text{ini}}^0=\phi_{k}^0=1$ for all for $k \in \I_{[0,N+\delta+n_J-2]}$.  Then,
\begin{itemize}
    \item[(i)] the optimal solution $({U}^\star ,{Y}^\star,G^\star)$ of OCP~\eqref{eq:OCPdata_descr} with horizon~$N$ admits exact finite-dimensional PCEs with $p$ terms, where $p$ is given by
    \begin{equation*}
    	p = p_{\text{ini}}
    	+( N+ \delta+n_J-1)(p_w-1) \in \mathbb Z_+,
    \end{equation*}
    \item[(ii)] and the finite-dimensional joint basis $(\phi^i)_{i=0}^{p-1}$ reads
    \begin{align*}
        (\phi^i)_{i=0}^{p-1} = (1, \phi_\text{ini}^1,\dots, \phi_\text{ini}^{p_\text{ini}-1}&, \phi_0^1,\dots, \phi_0^{p_w-1},\dots\\
         & \phi_{N+\delta+n_J-2}^1,\dots, \phi_{N+\delta+n_J-2}^{p_w-1}).
    \end{align*}
\end{itemize}
\end{lemma}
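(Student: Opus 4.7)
The plan is to mirror the proof strategy of Lemma~\ref{lem:no_truncation_error}, adapting the bookkeeping to the descriptor-specific ingredients, namely the structured nilpotency index $\delta$ and the widened consistency window of $n_J$ outputs and $\delta+n_J-1$ inputs (cf.\ Lemma~\ref{state_align}). The key conceptual lever is that the Hankel matrix in~\eqref{eq:OCPdata_hankel_descr} contains only \emph{deterministic} realization data, so the stochastic constraint is a linear map from $L^2(\Omega,\mathbb R^{T-N-\delta-n_J+2})$ into the random-variable trajectory space with scalar matrix coefficients. Hence Galerkin projection onto any orthogonal PCE basis acts coefficient-wise and transports structural properties from the random variable level to the PCE coefficient level.

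First, I would verify that the joint basis stated in~(ii) is rich enough to exactly represent the problem data. By construction, $(\phi^i)_{i=0}^{p-1}$ is the union of the constant element $\phi^0=1$, the $p_\text{ini}-1$ non-constant elements of the initial-condition basis $(\phi^i_\text{ini})$, and, for every time step $k\in\I_{[0,N+\delta+n_J-2]}$, the $p_w-1$ non-constant elements of $(\phi^i_k)$. A direct cardinality count yields $p=p_\text{ini}+(N+\delta+n_J-1)(p_w-1)$, matching~(i). Consequently $\widehat W_k$, $\widehat Y_k$, $\widehat U_k$ admit exact PCEs with at most $p$ terms by padding their native coefficient sequences with zeros.

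Next, I would apply a Galerkin projection to~\eqref{eq:OCPdata_hankel_descr}. Expanding any feasible triple $(U,Y,G)$ of OCP~\eqref{eq:OCPdata_descr} in a Hilbert basis of $L^2(\Omega,\mathbb R)$ that extends $(\phi^i)_{i=0}^{p-1}$ and projecting~\eqref{eq:OCPdata_hankel_descr} onto each $\phi^i$ yields, coefficient-wise, a deterministic Hankel system with right-hand side $(\mathsf y^i,\mathsf u^i,\hat{\mathsf w}^i)$ governed by the same matrix. For every index $i$ lying outside the joint basis one has $\hat{\mathsf w}^i=0$ by the previous step; choosing $\mathsf g^i=0$ is thus both admissible and forces $\mathsf y^i=\mathsf u^i=0$ through the Hankel identity. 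The consistency condition~\eqref{eq:OCPdata_consist_cond_descr} remains satisfied because $\widehat Y$ and $\widehat U$ have vanishing coefficients on those indices. Since the stage cost is quadratic and $(\phi^i)$ is orthogonal, the objective decomposes into a non-negative sum weighted by $\mean[\phi^i\phi^i]$, so zeroing components outside the joint basis can only decrease the cost while preserving feasibility; an optimizer can thus be chosen with exact PCE of order $p$, establishing both (i) and (ii).

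The main obstacle I anticipate is not the projection itself but verifying that the truncation preserves \emph{all} structural constraints implicit in OCP~\eqref{eq:OCPdata_descr}---in particular the non-anticipativity of the input $U$ with respect to the filtration induced by $W$, which in the PCE formulation corresponds to the causality pattern~\eqref{eq:OCPdataPCE_causality} but shifted by $\delta-1$ to account for the non-causality of the descriptor system (cf.\ Corollary~\ref{cor:nonanticipativity_descr}). The time-indexed construction of the joint basis is tailored precisely for this: each disturbance step $k$ contributes its own block $(\phi^i_k)$, so the non-anticipativity requirements translate into vanishing PCE coefficients on well-identified basis indices that are compatible with the truncation. Making this bookkeeping tight, together with the horizon shift from $n_x$ to $\delta+n_J-1$ in the consistency window inherited from Lemma~\ref{state_align}, is the principal difference from the proof of Lemma~\ref{lem:no_truncation_error} given by \cite{Pan21s}.
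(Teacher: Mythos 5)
Your proposal is correct and follows essentially the same route as the paper: the paper proves the descriptor version by adjusting the index bookkeeping of Lemma~\ref{lem:no_truncation_error}, whose proof rests exactly on the linearity of the realization-data Hankel map, the Galerkin projection acting coefficient-wise, and the cardinality count of the time-indexed joint basis (with details deferred to \cite{Pan21s}). Your additional remarks on zeroing coefficients outside the joint basis and on the causality pattern shifted by $\delta-1$ are consistent with Remark~\ref{rem:causalityPCE_descr} and do not constitute a different argument.
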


\begin{remark}[Filtered stochastic processes with PCE]\label{rem:causalityPCE_descr}~\\
Considering the polynomial basis given in Lemma~\ref{lem:no_truncation_error_descr}, the cau\-sality (non-anti\-pacitivity) of the filtration $(\mathcal G_k)_{k\in\mathbb N}$ used in Proposition~\ref{prop:markov_descr} implies (see Corollary~\ref{cor:nonanticipativity_descr}) that the PCE coefficients of the inputs satisfy 
\begin{align*}
	\pce{u}_{k}^{i} &= 0, \forall i\in \I_{[p_\text{ini}+(k-\delta+1)(p_w-1),p-1]},\, \forall k \in \I_{[\delta,N+\delta+n_J-1]},\\
	\pce{u}_{k}^{i} &= 0, \forall i\in \I_{[p_\text{ini}+{p_w}-1,p-1]},\, \forall k \in \I_{[0,\delta-1]}.
\end{align*}
This condition ensures that $U_0,\dots, U_{\delta-1}$ do not depend on the disturbance $W_j$ for $j\geq 1$ and that $U_k$ for $k\geq \delta$ does not depend on $W_j$ at time instances $j\geq k-\delta+1$.
\end{remark}

Similar to Section~\ref{sec:OCP_explicit}, we arrive at the data-driven reformulation:
\begin{subequations}\label{eq:OCPdataPCE_descr}
\begin{align}
    &\operatorname*{minimize}_{\mathbf{\mathsf u},\mathbf{\mathsf y}, \mathbf{ \mathsf g} }  \,
    \sum_{i=0}^{p-1}\sum_{k=\delta+n_J-1}^{N+\delta+n_J-2} \mean[\phi^i \phi^i]\left((\mathsf y_k^i)^\top Q\mathsf y^i_k + (\mathsf u^i_k)^\top R\mathsf u^i_k\right)  \\
   & \text{ subject to } \forall i \in \I_{[0,p-1]} \quad \nonumber
      \end{align}
         \begin{align}
  &\begin{bmatrix}
         \mathbf{\mathsf y}^i_{[0,N+\delta+n_J-2]}\\
         \mathbf{\mathsf u}^i_{[0,N+\delta+n_J-2]}\\
        \hat{\mathbf{\mathsf w}}^i_{[0,N+\delta+n_J-2]}
    \end{bmatrix}
     =
     \begin{bmatrix}
        \mathcal H_{N+\delta+n_J-1}\left(\mathbf{y}^\text{d}_{[0,T-\delta]}\right)\\
        \mathcal H_{N+\delta+n_J-1}\left(\mathbf{u}^\text{d}_{[0,T-\delta]}\right)\\
        \mathcal H_{N+\delta+n_J-1}\left(\mathbf{w}^\text{d}_{[0,T-\delta]}\right)
    \end{bmatrix}
     \mathsf   g^i \label{eq:OCPdataPCE_hankel_descr}\\
     &  \begin{bmatrix}
        \mathbf{ \mathsf y}^i_{[0, n_J-1]}\\
        \mathbf{ \mathsf u}^i_{[0,\delta+n_J-2]}\\ 
        \end{bmatrix} 
        = \begin{bmatrix}
        \mathbf{\hat {\mathsf y}}^i_{[0,n_J-1]}\\
        \mathbf{\hat {\mathsf u}}^i_{[0,\delta+n_J-2]} \\
    \end{bmatrix}
      \label{eq:OCPdataPCE_consist_cond_descr} \\
&     	\pce{u}_{k}^{i'} = 0, \forall i '\in \I_{[p_\text{ini}+(k-\delta+1)(p_w-1),p-1]},\, \forall k \in \I_{[\delta,N+\delta+n_J-1]},
     \label{eq:OCPPCE_causality_desrc_descr}\\
     &    	\pce{u}_{k}^{i'} = 0, \forall i'\in \I_{[p_\text{ini}+p_w-1,p-1]},\, \forall k \in \I_{[0,\delta-1]}.
     \end{align}
\end{subequations}
Notice that the crucial difference between the explicit LTI case in OCP~\eqref{eq:OCPdataPCE} and OCP~\eqref{eq:OCPdataPCE_descr} are the length of the horizons in the constraints \eqref{eq:OCPdataPCE_hankel_descr}--\eqref{eq:OCPPCE_causality_desrc_descr}. These constraints directly depend on the descriptor structure, specifically on the structured nilpotency index $\delta$ and on the dimension $n_J$.

The results presented in this section show that---while in the behavioral context there is no difference between explicit and descriptor LTI systems---there are distinctive aspects when it comes to data-driven stochastic optimal control. 
\section{Numerical Examples}\label{sec:Examples}
To illustrate our findings, we discuss two examples:  a scalar system subject to disturbances of alternating structure and a fourth-order system subject to Gaussian noise. The scalar example showcases the flexibility to model stochastic disturbances via PCE and the proposed stochastic fundamental lemma. The second example illustrates our findings for descriptor systems. 
In both cases, the implementation is done in \textsf{Matlab R2021b}.

\subsection{Scalar Example with Alternating Disturbance Sequence}
We consider the scalar system
\[
X_{k+1}=2X_k+U_k+W_k
\]
similar to \cite{Ou21}. The stochastic disturbance switches between two distributions, 
i.e.
\[
W_k =\begin{cases}
W_k \sim \mathcal{N}(0,0.1^2)& \quad \text{if } k =2i,\phantom{+1} \quad \hspace{1.5mm} i,k  \in \mbb{N} \\
W_k \sim\mathcal{U}(-0.2,0.2)&\quad \text{if } k = 2i+1, \quad i,k  \in \mbb{N}
\end{cases},
\]
where an i.i.d. Gaussian noise models the disturbance for even time index $k$ and an i.i.d. uniform distribution at each odd value of $k$. 
We suppose that the disturbance distribution is known, sufficient past realization data of $W$ is also available, while its future realizations are not known. The example illustrates the flexibility of PCE to model stochastic disturbances beyond the purely Gaussian setting.

The matrices $Q$ and $R$ are  $Q=R=1$. Additionally, we add the term $2 \sum\nolimits_{k=0}^{N-1} \mean [(U_{k+1}-U_k)^2]$ to smoothen the input sequence. We record $60$ state-input-disturbance measurements to construct the Hankel matrices. We solve with horizon $N=20$ for a randomly sampled initial deterministic condition. The dimension of the PCE basis is $p=N+1=21$.

Figure~\ref{fig:ScalarMoments} depicts the first two moments of the solutions in terms of $X^\star$ and $U^\star$, while Figure~\ref{fig:ScalarPCE} shows the PCE coefficient trajectories. Moreover, we sample a total of $20$ sequences of noise realizations and compute all the corresponding state and input trajectories, see Figure~\ref{fig:ScalarTraj}. As one can see, the state and the input trajectories in terms of expectation converge to $0$. Interestingly, the variance of the state and the input does not converge to $0$, cf. Figure~\ref{fig:ScalarMoments}.
Figure~\ref{fig:ScalarTraj} shows realizations for $20$ distinct disturbance sequences. Observe that the increase in variance towards the end of the horizon is also visible for the realizations. 
In terms of realizations and moments, this is reminiscent of a turnpike property, cf.~\cite{Ou21,tudo:faulwasser22a}. A detailed discussion of the phenomenon in the stochastic setting is, however, beyond the scope of the current paper.

\begin{figure}[!t]
\centering
\subfloat[Mean and variance obtained via PCE coefficients.]{
    \label{fig:ScalarMoments}
    \includegraphics[width=1\columnwidth]{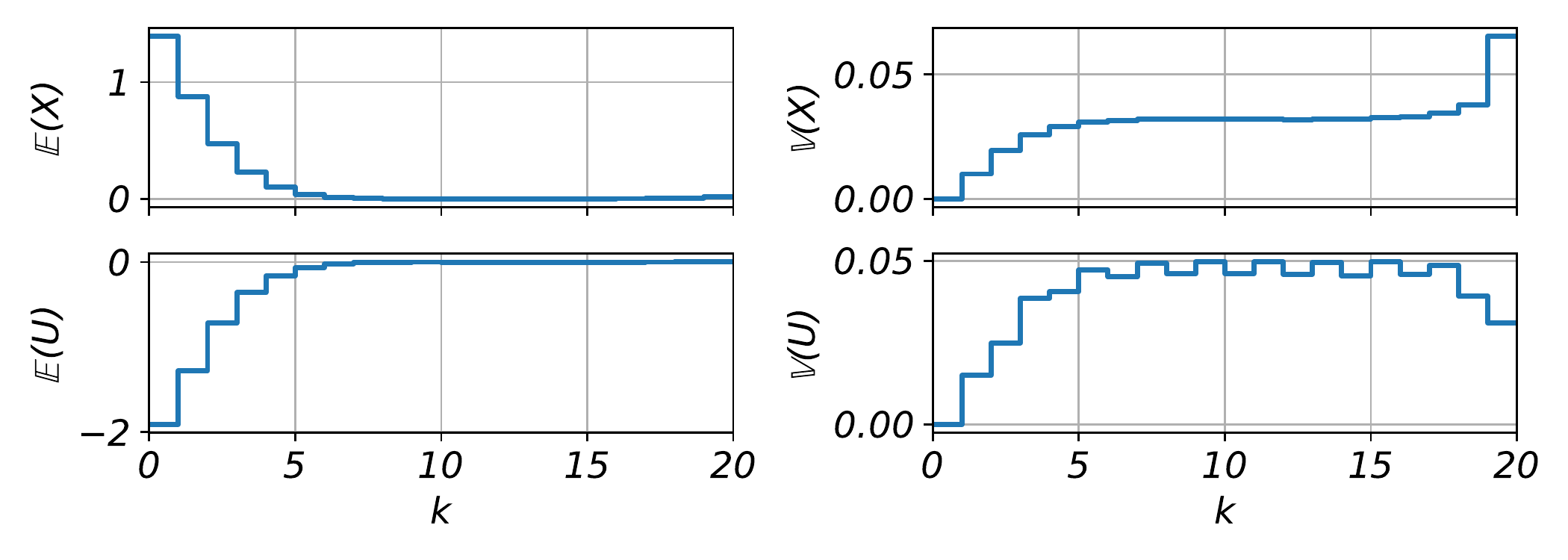}
}

\subfloat[PCE coefficient trajectories.]{
    \label{fig:ScalarPCE}
     \includegraphics[width=1\columnwidth]{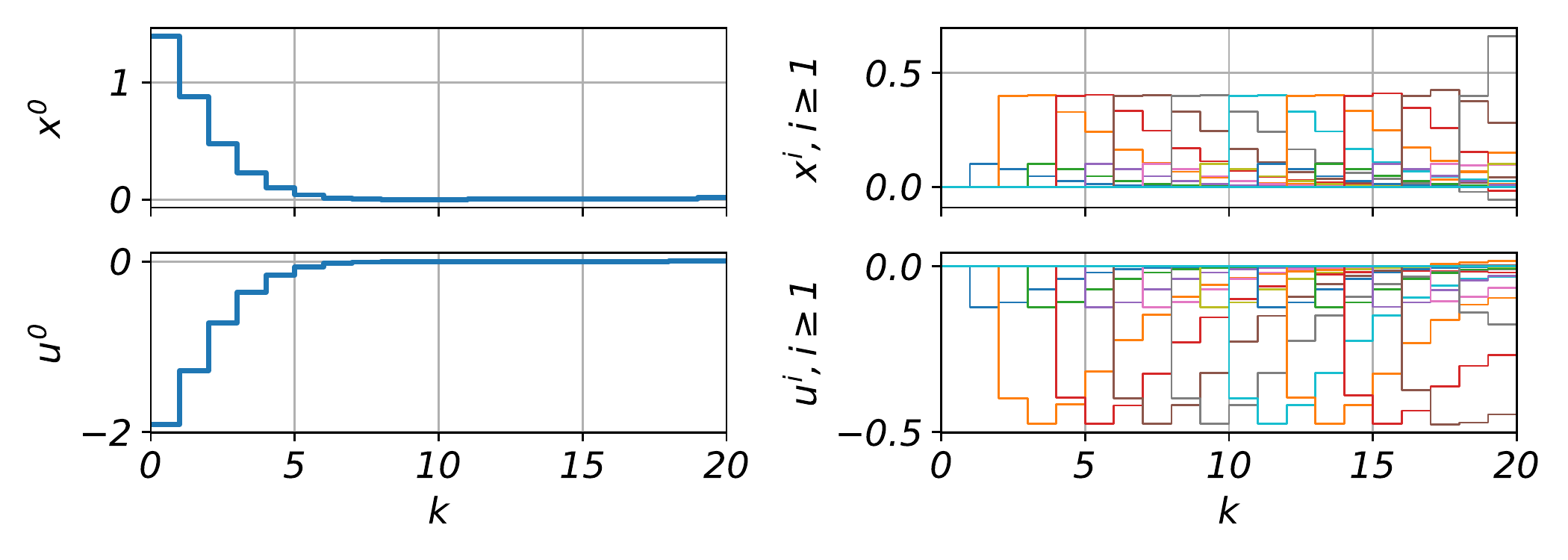}
}

\subfloat[$20$ different realizations trajectories.]{
    \label{fig:ScalarTraj}
     \includegraphics[width=1\columnwidth]{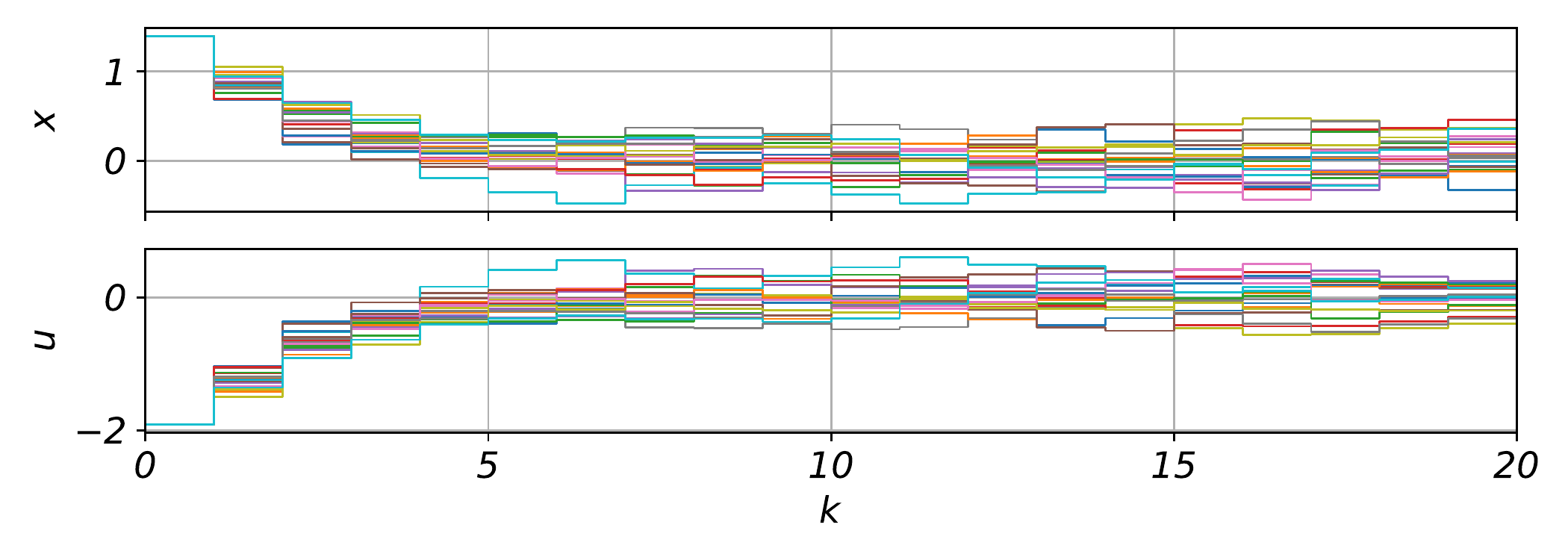}
}
\caption{Results for the scalar example.}
\end{figure}

\subsection{Descriptor Example}
We consider a stochastic extension of the fourth-order linear descriptor system considered in \cite{SchmitzFaulwasserWorthmann22}. The system matrices read
\begin{gather*}
    E = \begin{bmatrix} 0 & 0 & 1 & 0 \\ 
                        1 & 2 & 0 & 2 \\ 
                        2 & 3 & 1 & 3 \\ 
                        1 & 2 & 0 & 2 \end{bmatrix},~
    A = \begin{bmatrix} \phantom{-}1 & 1 & 0 &  2\\ 
                        \phantom{-}0 & 2 & 1 & 1 \\ 
                        \phantom{-}1 & 4 & 2 & 3 \\ 
                        -1 & 1 & 1 & 0 \end{bmatrix},~
    B = \begin{bmatrix} -1 \\ \phantom{-}2 \\ \phantom{-}2 \\ \phantom{-}3 \end{bmatrix},~
    F = \begin{bmatrix} 1\\ 2 \\ 3 \\ 2 \end{bmatrix},\\
    C = \begin{bmatrix} 1 & 2 & 1 & 2 \\ 
                        0 & 1 & 0 & 1 \end{bmatrix},~
    D = H = 0_{2\times 1}.
\end{gather*}
The system can be transformed into quasi-Weierstra{\ss} form with $n_{J}=\delta=2$ via the matrices
\[
    P = \begin{bmatrix} \phantom{-}0 & -1 & \phantom{-}0 & \phantom{-}1 \\ 
                        -1 & \phantom{-}0 & \phantom{-}1 & \phantom{-}1 \\ 
                        \phantom{-}1 & \phantom{-}0 & \phantom{-}0 & -1 \\ 
                        \phantom{-}1 & \phantom{-}1 & -1 & -1 \end{bmatrix},~
    S = \begin{bmatrix} \phantom{-}0 & -1 & \phantom{-}1 & \phantom{-}0 \\
            \phantom{-}1 & \phantom{-}2 & -1 & \phantom{-}0 \\ 
            -1 & -1 & \phantom{-}1 & 0 \\ 
            \phantom{-}0 & \phantom{-}1 & \phantom{-}0 & -1 \end{bmatrix}.
\]
R-controllability and R-observability are easily verified via Remark~\ref{rem:controllability/observability_weierstrass}. Note that in this example we have $F_N=0$, which is a special case of $NF_N=0$ and hence tightens the causality condition of the input. Thus, causality with respect to the disturbance $W$ is easily obtained in the PCE problem formulation as sketched in Remark~\ref{rem:causalityPCE_descr}. For all $k \in \mathbb{I}_{[0,N-1]}$, the disturbance $W_k$ is modelled as $\textit{i.i.d.}$ Gaussian random variables with  distribution $\mathcal{N}(0,0.1^2)$.

We want to steer the system to the point $(y,u)=([20,0]^\top,0)$ and solve the OCP in form of \eqref{eq:OCPdataPCE} with horizon $N=20$.  The weighting matrices in the objective function are chosen to be $Q=I_{2}$ and $R=1$.

In the data collection phase, we record $160$ output-input-noise measurements to construct the Hankel matrices. With respect to the initial condition, we assume no noise measurement is available at run-time.  Thus, we model the noise in OCP~\eqref{eq:OCPdataPCE} via its PCE. Moreover, the input applied to the system is randomly sampled from a uniform distribution with support $[-1,1]$. In sum, we obtain the uncertain initial condition~\eqref{eq:OCPdataPCE_consist_cond} which is modelled via PCE. 

Slack variables are added to the initial condition as \eqref{eq:IniSlack}, while we penalize them via the $1$-norm with a weighting parameter. Moreover, since the PCE coefficient of $W$ is known, we employ the null-space method to reduce the dimension of $\pce{g}^i$ and thus accelerate the computation. For further details we refer to \cite{Pan21s}. 

The trajectories of first two moments of $Y$ and $U$ are depicted in Figure~\ref{fig:DescriptorMoments} while the trajectories of $Y$ and $U$ for $20$ different noise realizations are  shown in Figure~\ref{fig:DescriptorTraj}. Note that in the figure we plot the solution on the horizon length $N=20$ and leave out the additional time steps  of the initial condition. Furthermore, we sample a total of 1000 initial conditions as well as the noise realization sequences and compute the corresponding output/input trajectories. The evolution of the normalized histograms of output realizations $y_1$ at $k=0,4,9,14,19$ is illustrated in Figure~\ref{fig:DescriptorDist}. As one can see, the proposed data-driven optimal control achieves a narrow distribution of $Y_1$ around $y_1=20$.
 
\begin{figure}[!t]
\centering
\subfloat[Mean and variance obtained via PCE coefficients.]{
    \label{fig:DescriptorMoments}
    \includegraphics[width=1\columnwidth]{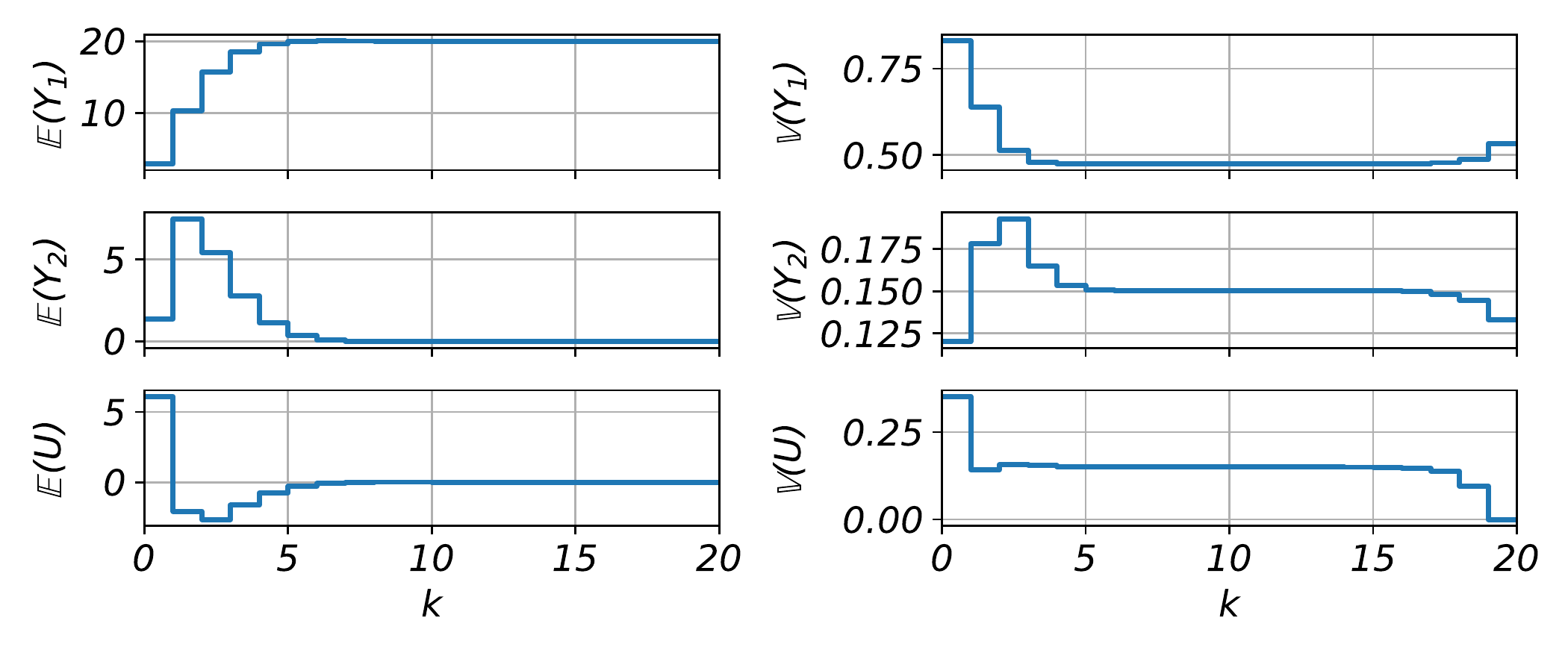}
}
\hfill
\subfloat[$20$ different realizations trajectories.]{
    \label{fig:DescriptorTraj}
     \includegraphics[width=1\columnwidth]{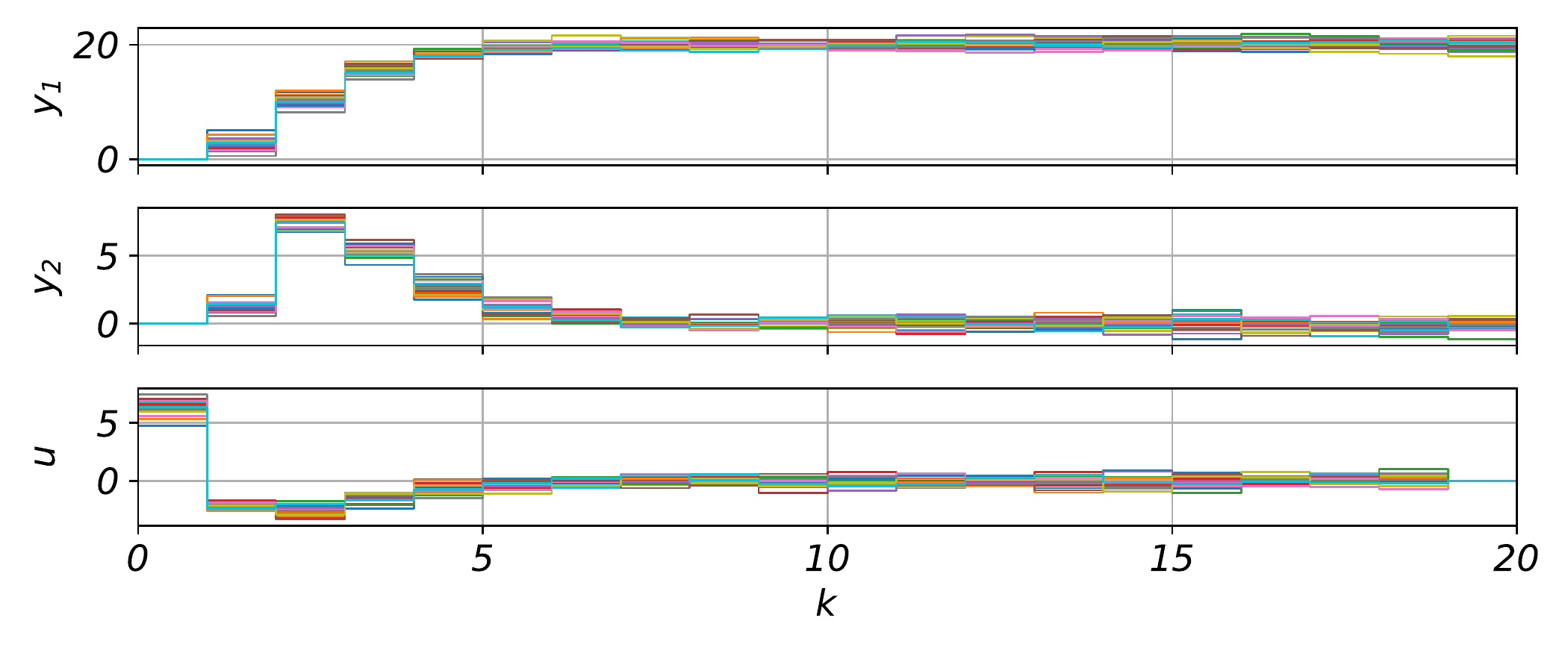}
}
\hfill
\subfloat[Histograms of the output $Y_1$ from $1000$ sampled trajectories.]{
		\label{fig:DescriptorDist}
		\includegraphics[width=0.95\columnwidth]{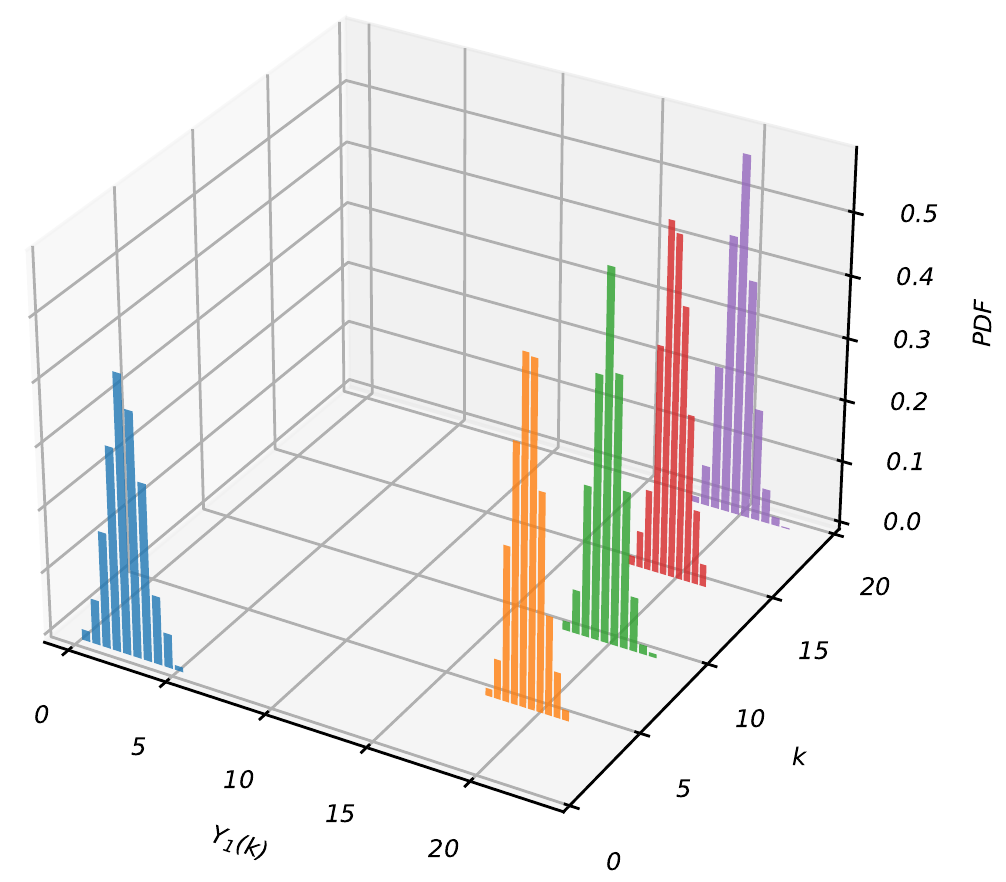}
	}

\caption{Results for the descriptor example.}
\end{figure}
\section{Discussion and Open Problems} \label{sec:Discussion}

As discussed in Section~\ref{sec:recapI} the fundamental lemma as such has deep roots in behavioral systems theory. In this tutorial-style paper we proposed extensions of the fundamental lemma to stochastic settings. To this end, we formalized concepts for behavioral characterizations of stochastic systems. We introduced the stochastic $L^2$-behavior and its counterpart in terms of series expansion coefficients (which for suitable polynomial chaos expansions can be shown to be finite dimensional). We have  analyzed how these behaviors relate to the behavior of sampled trajectories (i.e.\ the realization behavior). We have also shown that the description in stochastic moments is structurally limited: (i) moments are by their very construction nonlinear projections of random variables, and (ii) this projection, especially if limited to the first two moments, is subject to a substantial loss of information. 

Moreover, we remark that the stochastic behavioral description as such does not necessitate the existence of a finite series expansion. Yet, finite-dimensional parametrization of the random-variables fosters numerical tractability in applications.\vspace{2mm}

\noindent\textbf{Coarse $\sigma$-Algebras underlying Stochastic Behaviors}.
The careful reader has surely recognized that going for PCE as a specific and numerically favourable series expansion of $L^2$-random variables is subject to a subtle loss of information in the underlying $\sigma$-algebra, which may potentially jeopardize the equivalence in description established by the lift between the expansion coefficient behavior $\mathfrak C_\infty$ and the random variable behavior $\mathfrak S_\infty$, cf. Remark~\ref{rem:liftPCE}. However, as pointed out by \cite{Willems12} and \cite{Baggio17}, from a behavioral point of view---and actually also from a dynamical-systems perspective---it is advisable to consider \textit{coarse} $\sigma$-algebras instead of finer ones. Besides the structure discussed by \cite{Willems12,Baggio17}, where the coarse $\sigma$-algebra is given along the directions perpendicular to the underlying (deterministic) behavior, the loss of information in the context of PCE seems to result in coarseness of a different nature. Hence, two issues arise:
\begin{enumerate}
    \item [(i)] To which extent is the  information loss in the $\sigma$-algebras induced by the orthogonal PCE basis  of major concern?
    \item [(ii)] How is this  loss related to the \textit{coarse algebras} suggested by \cite{Willems12} and \cite{Baggio17}? 
\end{enumerate}
Indeed, from an engineering perspective, issue~(i) appears very much non-critical. Moreover, one may argue that considering the $\sigma$-algebra $\sigma(\mathfrak F)$, i.e.\ the one induced by a  family $\mathfrak F$ of random variables in the context of PCE (cf.\ Subsection~\ref{sec:PCE}), instead of the full $\sigma$-algebra $\mathcal F$ is a straightforward way of coarsening. Hence, considering $\sigma(\mathfrak F)$ as the $\sigma$-algebra underlying the considered $L^2$-probability space ensures behavioral equivalence. Yet, there may exist other tractable approaches to construct coarse(r) $\sigma$-algebras. Thus issue~(ii) calls for further investigations. \vspace{1mm}

\noindent \textbf{Data-driven Output Feedback Predictive Control}. 
While the fundamental lemma  has already been published in 2005 ~\citep{WRMDM05}, its impact and reception peaked upon realizing that is opens a path towards data-driven output-feedback predictive control \citep{Yang15a,coulson2019data} and towards direct data-driven control. Not only does the data-driven approach alleviate the need for explicit identification of a state-space model---it also works with input-output data. Recall that (as per the leading M) MPC as such is traditionally a model-based state-feedback strategy, i.e., observer design is an inevitable step in almost all implementations. The data-driven approach allows to overcome this burden.

In view of the contributions of this tutorial-style paper our developments pave the road towards data-driven output-feedback sto\-chastic predictive control. First steps towards data-driven sto\-chastic MPC, i.e., problem formulation and stability analysis for the case of usual LTI systems with state feedback, have been done by~\cite{Pan21s,tudo:pan22a}. At the time of writing these lines, first result on the analysis for the output-feedback case with terminal constraints are available as preprints~\citep{pan2022e,pan22d}. The extension to the stochastic descriptor case and to closed-loop guarantees without terminal ingredients is completely open. 

Another dimension, which will be key for the success of data-driven MPC, are tailored numerical methods. While for model-based MPC powerful codes enable computation times down to a few micro-seconds (depending on the problem size and the computational platform), the results on multiple-shoo\-ting formulations of the OCP~\citep{tudo:ou23a}---which we commented on in Section~\ref{sec:StochOCPs}---are first steps in this direction. Yet, they leave substantial room for further refinements and improvements.\vspace*{2mm} 

\noindent \textbf{Robustness}. 
Robustness analysis for data-driven description of deterministic systems has been subject to widespread research efforts, see~\citep{coulson2019regularized,yin2021maximum,Berberich20c,Huang21r} and the overview~\citep{MarkDorf21}. We expect that due to the close relation between the realization behavior and the coefficient behavior, cf.\ Theorem~\ref{thm:equivalence}, these results can be transferred to the stochastic setting.\vspace*{2mm}

\noindent \textbf{Data-driven Analysis of Descriptor Systems}. 
The data-driven approach can be used beyond feedback design. Indeed it also allows analysis of system properties, see, e.g., \citep{romer2019data,romer2019one} for passivity and dissipativity. When it comes to descriptor systems much less has been done. Specifically, the verification of structural properties besides the dimension of $z^J$ in Lemma~\ref{lem:dim_zJ} (nilpotency index, regularity of the matrix pencil, etc.) would be appealing. 
\vspace*{2mm}

\noindent \textbf{Nonlinear Systems}. %
Data-driven control of nonlinear systems is a topic, which has seen substantial progress. One can mention the recent work on data-driven prediction of so-called observables, i.e., real- or complex-valued functions of the state, in the Koopman framework by means of (extended) Dynamic Mode Decomposition (DMD), see~\citep{BrunKutz22} as well as the recent surveys by~\cite{bevanda2021koopman} and~\cite{BrunBudi21} as well as the references therein, or SINDy
~\citep{brunton2016discovering}. The Koopman framework can also be applied for control. To this end, \cite{ProcBrun16} augmented the state dimension~$n_x$ by the number of input variables~$n_u$ to set up a surrogate model, see, e.g., \citep{arbabi18,korda2018linear,MaurSusu20}. To alleviate the curse of dimensionality, \citet{WillHema2016}, \citet{Sura2016}, \cite{klus20} proposed a bilinear approach for control-affine systems which shows a superior performance for systems with coupling between state and control, see also~\cite[Section~4]{MaurSusu20}. Moreover, \citet{SchaWort22} provide rigorous bounds on the prediction error for control explicitly depending on the dictionary size (projection) and the number of employed data points (estimation). The analysis of the estimation error can be extended to the setting based on stochastic differential equations and ergodic sampling, see~\citep{nuske2021finite}.
However, data-driven control and fundamental-lemma-type results for nonlinear stochastic systems are not known yet. Similarly, there is substantial prospect of deriving fundamental-lemma-like results for infinite dimensional systems.

\section{Conclusions}\label{sec:Outlook}
This paper has taken a fresh look at  behavioral theory for stochastic systems. We have constructed equivalent behavioral characterizations of stochastic linear time-invariant systems in terms of $L^2$-random variables and in terms of series expansions of these variables. In other words, we have connected the seminal contributions of Jan C. Willems and co-workers with classic concepts put forward by Norbert Wiener. 

Importantly, our developments show that there is a synergy of linearity of system structures and the linearity of series expansions of random variables, and of polynomial chaos expansions in particular. This synergy enabled the introduction of novel concepts such as behavioral lifts between different behavioral representations of stochastic systems. 
The series expansion approach allows to extend the celebrated fundamental lemma to stochastic systems in explicit form and in descriptor form. Crucially, these extensions are built on Hankel matrices comprising realizations of stochastic variables, i.e., they rely on measurement data only. 

Thus, the presented contributions open up  
new perspectives on data-driven control of stochastic systems, on data-driven uncertainty quantification, and on data-driven uncertainty propagation without being restricted to the Gaussian setting.

\section*{Acknowledgements}
The authors acknowledge the very helpful comments and suggestions made by the anonymous reviewer and they express their gratitude to Paolo Rapisarda for helpful literature suggestions.

Karl Worthmann gratefully acknowledges support from the German Research Foundation (DFG; grant no.\ 507037103).

Ruchan Ou acknowledges partial funding by the German Federal Ministry of Education and Research (BMBF) in the course of the 6GEM research hub under grant number 16KISK038.

Philipp Schmitz is grateful for the support from the TU~Ilmenau (internal excellence funding; project KITE) and the Carl Zeiss Foundation (DeepTurb---Deep Learning in and of Turbulence; project No.\ 2018-02-001).
\section*{Appendix}
\label{appendix}

\noindent \textbf{Explicit Reformulation of Descriptor Systems.}
It is known that descriptor systems allow an equivalent representation in terms of explicit LTI systems---at least in the sense of their manifest behaviors, cf. \citep[Theorem 3]{willems86i} and \citep[Section IX.]{Willems91}. However, attaining an explicit LTI representation comes at the expense of inevitable reinterpretation (permutation) of input and output signals which may be unfavorable if chosen inputs and outputs have physical interpretations reflecting on the technical possibilities to measure and to actuate. 

Given a manifest behavior $\mathfrak B_\infty^\text{i/o}$ realized by a regular descriptor system~\eqref{sys_dae} we show in the following that there exists a realization $\mathfrak B_\infty^\text{i/o}$ in terms of an explicit LTI system up to some permutation of the input and output variables. Since we consider the manifest behavior, we assume without loss of generality that the descriptor system is in quasi-Weierstraß from~\eqref{qweier}.
We have $(u,y)\in\mathfrak B_\infty^\text{i/o}$ if and only if there is $x^J:\mathbb N\rightarrow \mathbb R^{n_J}$ such that
    \begin{subequations}
    \label{eq:syszz}
    \begin{align}
        x_{k+1}^J &= J x_k^J + B_J u_k\\
        \label{eq:syszzb}
        y_k &= C_J x^J_k + Du_k -\sum_{i=0}^{\delta -1}C_NN^iB_Nu_{k+i}
    \end{align}      
    \end{subequations}
    for all $k\in\mathbb N$, cf.\ \eqref{qsysb} and \eqref{evolution}.
    Observe that it may happen because of the output matrix that $C_N N^{\delta-1}B_N=0$. Thus deviating from the definition of the structured nilpotency index here we take $\delta$ as the smallest positive integer such that $C_N N^{\delta-1}B_N\neq 0$ and $C_N N^{\delta}B_N=0$, or when $C_NB_N =0$ we set $\delta=1$. In the case $\delta=1$ one sees that \eqref{eq:syszz} is already an explicit LTI system.
    
    Suppose that $\delta>1$ and let $r\doteq\operatorname{rk}(C_N N^{\delta-1} B_N)\geq 1$. We find permutation matrices $T\in \mathbb R^{n_y\times n_y}$, $S\in\mathbb R^{n_u\times n_u}$ such that for
    \begin{equation}
    \label{block}
       T(D -  C_N  B_N) S = \begin{bmatrix}
           \Phi_0 & \Gamma_0\\ F_0 & G_0
       \end{bmatrix},\quad -T C_N N^{i} B_N S = \begin{bmatrix}
            \Phi_{i} & \Gamma_{i} \\ F_{i} & G_{i}
        \end{bmatrix},
    \end{equation}
    with $\Phi_0,\Phi_i \in \R^{r\times r}$ and $i=1,\dots,\delta-1$, the matrix $\Phi_{\delta-1}$ is invertible. As $\operatorname{rk}(C_N N^{\delta-1} B_N)=\operatorname{rk}(\Phi_{\delta-1})=r$ the Schur complement of $-TC_N N^{\delta-1} B_NS$ with respect to $\Phi_{\delta-1}$ satisfies
    \begin{equation}
    \label{eq:schur}
        G_{\delta-1}-F_{\delta-1} \Phi_{\delta-1}^{-1} \Gamma_{\delta-1} = 0.
    \end{equation}
    Let $T^\top=\begin{bmatrix}
       T_1^\top & T_2^\top
    \end{bmatrix}$ and $S=\begin{bmatrix}
       S_1 & S_2
    \end{bmatrix}$ with $T_1\in \mathbb R^{r\times n_y}$, $S_1\in\mathbb R^{n_u\times r}$ and
    decompose the (permuted) input and output signals accordingly,
    \begin{equation*}
        u_k = S S^{-1}u_k = \begin{bmatrix}
            S_1 & S_2
        \end{bmatrix}
        \begin{bmatrix}
            \vartheta_k\\ \hat u_k
        \end{bmatrix}, \quad \begin{bmatrix}
            \eta_k\\
            \hat y_k
        \end{bmatrix} = \begin{bmatrix}
           T_1\\ T_2
        \end{bmatrix}y_k = 
        Ty_k.
    \end{equation*}
    Then \eqref{eq:syszzb} is equivalent to
    \begin{subequations}
    \begin{align}
        \label{eq:resolvea}
        \eta_k &= T_1 C_J x_k^J - \sum_{i=0}^{\delta-1} \bigl(\Phi_i \vartheta_{k+i} + \Gamma_i \hat u_{k+i}\bigr)\\
        \label{eq:resolveb}
        \hat y_k &= T_2 C_J x_k^J - \sum_{i=0}^{\delta-1} \bigl(F_i\vartheta_{k+i} + G_i \hat u_{k+i}\bigr)
    \end{align}    
    \end{subequations}
    Solving equation~\eqref{eq:resolvea} for $\vartheta_{k+\delta-1}$ and plugging this into \eqref{eq:resolveb} one obtains together with~\eqref{eq:schur}
    \begin{subequations}
    \label{panama}
    \begin{align}
        \vartheta_{k+\delta-1} ={ }& \Phi_{\delta-1}^{-1} \left( \eta_k -T_1 C_J x_k^J - \sum_{i=0}^{\delta-2} \Phi_i \vartheta_{k+i} - \sum_{i=0}^{\delta-1}\Gamma_i \hat u_{k+i}\right)
        \\
        \begin{split}
        \hat y_k ={ } &(T_2 -F_{\delta-1} \Phi_{\delta-1}^{-1}T_1) C_Jx_k^J + F_{\delta-1} \Phi_{\delta-1}^{-1} \eta_k\\
        &- \sum_{i=0}^{\delta-2} (F_i-F_{\delta-1}\Phi_{\delta-1}^{-1}\Phi_i) \vartheta_{k+i}\\
        &- \sum_{i=0}^{\delta-2} (G_i-F_{\delta-1}\Phi_{\delta-1}^{-1}\Gamma_i) \hat u_{k+i}
        \end{split}
    \end{align}
    \end{subequations}
    We introduce the augmented state variable
    \begin{equation*}
    \label{argentinia}
        \tilde x_k = \begin{bmatrix}
            x_k^J\\\vartheta_k\\\vdots\\\vartheta_{k+\delta-2}
        \end{bmatrix}
        \end{equation*}
    and the matrix
    \begin{equation}
        \mathcal A = \begin{bmatrix}
            J & B_J S_1 &0\\ 
            0 &  0 & I_{r(\delta-2)} \\
            -\Phi_{\delta-1}^{-1}T_1C_J &  \Phi_{\delta-1}^{-1}\Phi_0 & \Phi
        \end{bmatrix}
    \end{equation}
    with $\Phi = \begin{bmatrix}
            \Phi_{\delta-1}^{-1} \Phi_1&\dots &\Phi_{\delta-1}^{-1} \Phi_{\delta-2}
        \end{bmatrix}$. The idea of augmentation is borrowed from the continuous-time setting \citep{ilchmann2018model,IlchLebe19}, where the state is augmented by derivatives of the control input. Further, let
         \begin{equation}
        B = \begin{bmatrix}
            0\\0\\\Phi_{\delta-1}^{-1}
        \end{bmatrix},\quad \widetilde B_i = \begin{bmatrix}
            0\\0\\-\Phi_{\delta-1}^{-1}\Gamma_i
        \end{bmatrix},
    \end{equation}
    \begin{equation}
    \label{peru}
        \mathcal C = \begin{bmatrix}
            0 & I_{r} & 0\\
            (T_2 -F_{\delta-1} \Phi_{\delta-1}^{-1}T_1) C_J & \Psi_0 & \Psi
        \end{bmatrix}
    \end{equation}
    with $\Psi = \begin{bmatrix}
            \Psi_1 & \dots & \Psi_{\delta-2}
        \end{bmatrix}$, where $\Psi_i = -(F_i -F_{\delta-1}\Phi_{\delta-1}^{-1}\Phi_i)$.
    The equations in \eqref{panama} can be reformulated as
    \begin{subequations}
    \label{ecuador}
    \begin{align}
    \label{ecuador_a}
        \tilde x_{k+1} &= \mathcal A \tilde x_{k} + B\eta_k + \sum_{i=0}^{\delta-1}\tilde B_{i} \hat u_{k+i}\\
        \label{ecuador_b}
        \vartheta_k &= \begin{bmatrix}
     0 & I_{r} & 0
 \end{bmatrix} \tilde x_k\\
    \hat y_k &= \mathcal C \tilde x_k + F_{\delta-1}\Phi_{\delta-1}^{-1} \eta_k - \sum_{i=0}^{\delta-2} (G_i-F_{\delta-1}\Phi_{\delta-1}^{-1}\Gamma_i) \hat u_{k+i}.
    \end{align} 
    \end{subequations}
    We adjust the augmented state variable in order to eliminate of the non-causality in \eqref{ecuador_a},
    \begin{equation}
        \hat x_k \doteq \tilde x_k - \sum_{i=1}^{\delta-1}\sum_{j=1}^{i} \mathcal A^{j-1}\widetilde B_i \hat u_{k+i-j} = \tilde x_k - \sum_{\gamma=0}^{\delta-2}\sum_{i=\gamma+1}^{\delta-1} \mathcal A^{i-\gamma-1}\widetilde B_i \hat u_{k+\gamma},
    \end{equation}
    where the second equality results from a reordering of the summands. We obtain
    \begin{equation}
        \begin{split}
            &\tilde x_{k+1}-\hat x_{k+1} =\sum_{i=1}^{\delta-1}\sum_{j=1}^{i} \mathcal A^{j-1}\widetilde B_i \hat u_{k+i-j+1} \\
            &= \sum_{i=1}^{\delta-1}\sum_{j=2}^{i} \mathcal A^{j-1}\widetilde B_i \hat u_{k+i-j+1} + \sum_{i=1}^{\delta-1} \widetilde B_i \hat u_{k+i}\\
            &= \mathcal A\sum_{i=1}^{\delta-1}\sum_{j=1}^{i-1} \mathcal A^{j-1}\widetilde B_i \hat u_{k+i-j} + \sum_{i=1}^{\delta-1} \widetilde B_i \hat u_{k+i}\\
             &= \mathcal A\sum_{i=1}^{\delta-1}\sum_{j=1}^{i} \mathcal A^{j-1}\widetilde B_i \hat u_{k+i-j} + \sum_{i=1}^{\delta-1} \widetilde B_i \hat u_{k+i} - \sum_{i=1}^{\delta-1} \mathcal A^{i}\widetilde B_i \hat u_{k}.
        \end{split}
    \end{equation}
    Together with \eqref{ecuador_a} this shows
    \begin{equation}
    \label{venezuala}
        \hat x_{k+1} = \mathcal A \hat x_k + B\eta_k + \sum_{i=0}^{\delta-1} \mathcal A^i\widetilde B_i \hat u_k
    \end{equation}
 Further, ones sees from \eqref{ecuador_b} that
 \begin{equation*}
     \vartheta_k = \begin{bmatrix}
     0 & I_{r} & 0
 \end{bmatrix} \tilde x_k =  \begin{bmatrix}
     0 & I_{r} & 0
 \end{bmatrix}\left(\hat x_k + \sum_{i=1}^{\delta-1}\sum_{j=1}^{i} \mathcal A^{j-1}\widetilde B_i \hat u_{k+i-j}\right).
 \end{equation*}
 As $\begin{bmatrix}
     0 & I_{r} & 0
 \end{bmatrix}\mathcal A^{j-1}\widetilde B_i=0$ for $j<\delta-1$ and all $i=0,\dots, \delta-1$ this simplifies to
 \begin{equation}
 \label{colombia}
     \vartheta_k = \begin{bmatrix}
     0 & I_{r} & 0
 \end{bmatrix}\left(\hat x_k + \mathcal A^{\delta-2}\widetilde B_{\delta-1} \hat u_{k}\right)
 \end{equation}
 Consequently, \eqref{ecuador}, \eqref{venezuala} and \eqref{colombia} yield
    \begin{subequations}
    \label{uruguay}
        \begin{align}
            \hat x_{k+1} &= \mathcal A \hat x_{k} + \mathcal B \begin{bmatrix}
                \eta_k\\ \hat u_k
            \end{bmatrix}\\
            \begin{bmatrix}
                \vartheta_k\\
                \hat y_k
            \end{bmatrix} &= \mathcal C \hat x_k + \mathcal D \begin{bmatrix}
                \eta_k\\ \hat u_k
            \end{bmatrix} + \sum_{i=1}^{\delta-2} \begin{bmatrix}
                0&0 \\0& \Lambda_i
            \end{bmatrix} \begin{bmatrix}
                \eta_{k+i}\\
                \hat u_{k+i}
            \end{bmatrix}
        \end{align}
    \end{subequations}
    with $\mathcal A$ and $\mathcal C$ as in \eqref{argentinia} and \eqref{peru}, respectively and 
    \begin{equation*}
        \mathcal B = \begin{bmatrix}
            B & \sum_{i=0}^{\delta-1} \mathcal A^i \widetilde B_i
        \end{bmatrix},\quad \mathcal D = \begin{bmatrix}
            0 & \mathcal A^{\delta-2}\widetilde B_{\delta-1}\\
            F_{\delta-1}\Phi_{\delta-1}^{-1} & \Lambda_0
        \end{bmatrix},
    \end{equation*}
    \begin{equation*}
        \Lambda_i = -(G_i - F_{\delta-1} \Phi_{\delta-1}^{-1} \Gamma_i) + \mathcal C\sum_{\gamma=i+1}^{\delta-1} A^{\gamma-i-1} \widetilde B_\gamma.
    \end{equation*}
    Observe that the manifest variables, that are inputs and outputs jointly together, of~\eqref{panama} are up to some permutation the same as for \eqref{uruguay}. Moreover, \eqref{panama} and \eqref{uruguay} are structural similar, but in contrast the \eqref{panama} the highest order of future inputs in \eqref{uruguay} is $\delta-2$, and, thus reduces by one order. Although, \eqref{uruguay} may still not be in explicit LTI form, further repetitions of the preceding steps, i.e.\ partially exchanging the role of inputs and outputs variables regarding the rank of $\Lambda_{\delta-2}$ followed by augmentation and adjustment of the state variable, leads to a further decrease in the order. At least after $\delta-2$ additional iterations this results in an explicit LTI realization for $\mathfrak B_\infty^\text{i/o}$ (up to permutation of variables).

\bibliography{refs.bib}
\end{document}